\definecolor{darkblue}{rgb}{0.0,0.0,0.6}
\definecolor{darkgreen}{rgb}{0.0,0.6,0.0}
\numberwithin{table}{section}    
\numberwithin{figure}{section}   
\numberwithin{equation}{section} 
\newtheorem{assumption}[theorem]{Assumption}
\newtheorem{remark}[theorem]{Remark}
\begin{document}

\title{ a novel way of computing the shape derivative for a class of non-smooth pDEs and
 its impact on deriving 
 necessary conditions for locally optimal shapes}
\date{\today}
\author{L.\,Betz\footnotemark[1]}
\renewcommand{\thefootnote}{\fnsymbol{footnote}}
\footnotetext[1]{Faculty of Mathematics,  University of W\"urzburg,  Germany}
\renewcommand{\thefootnote}{\arabic{footnote}}

\maketitle

\begin{abstract}
We derive  necessary  conditions for locally optimal shapes of a design problem governed by a non-smooth PDE. The main particularity of the state system  is the lack of differentiability of the nonlinearity.  We work  in the framework  of the functional variational approach (FVA) \cite{pen}, which has the capacity to transfer geometric optimization problems into optimal control problems, the set of admissible shapes being parametrized by a large class of continuous mappings. In the FVA setting, we   introduce a sensitivity analysis   technique that is novel even for  smooth PDEs. We emphasize that we do not resort  to extensions on the hold-all domain or any kind of approximation of the original PDE.  The computation of the directional derivative of the state w.r.t.\,functional variations results in a new way of computing the shape derivative. The presented approach allows us to  handle  in the objective pointwise observation  and derivatives of the state on an observation set   as well as  distributed observation terms.
In addition, we introduce  the  concept of locally optimal shapes and we put into evidence its connection to locally minimizers of the corresponding control problem. With directional differentiability results for the control-to-state map at our disposal, we can  then  state necessary  conditions for locally optimal shapes in general non-smooth settings.
\end{abstract}

\begin{keywords}
Optimal control of non-smooth PDEs,  shape and topology optimization,  functional variational approach, Hamiltonian systems, pointwise observation,  distributed observation, {shape derivative}, differential geometry.
\end{keywords}

\begin{AMS}
49Q10, 35Q93, 49Q12, 49K20.
\end{AMS}

\section{Introduction}

Shape and topology optimization problems have been extensively addressed  in the past decades by different types of  methods, see e.g.\,the classical contributions \cite{sz,pir,hp,dz,top_book, allaire}. Given a hold-all domain $D$, an optimal design problem, defined
over a given family $\OO$ of subdomains of $D$   may have 
the following structure
\begin{equation}\begin{aligned}
\min_{\Omega\in\mathcal{O}}&\  \ \JJ(\O,y_\O)
\\\text{s.t.}&\ \ 
 y_\Omega \text{ solves }(Q_\Omega),
 \end{aligned}
\end{equation}
where   $\JJ$ is a given shape functional, and $(Q_\Omega)$ is the state system (typically a PDE or a VI) posed in $\O$ with solution $y_\O$ (the state). 
As one is interested in finding information about the optimal shape,  there is a zoo of works  addressing   the sensitivity analysis of  $\O\mapsto \JJ(\O,y_\O)$ see e.g.\,\cite{ nsz, hs, sok, k_sturm,kun,kk, kov_k, kov_k0, zs} and the references therein. The most common   differentiability notions appearing in all these  papers are   based on variations of the geometry. 



A meanwhile well-established strategy for  handling  shape and topology optimization problems 
is the so-called functional variational approach (FVA), which was  first introduced in  \cite{mnt}. This technique will be employed in the present paper too. One of the essential aspects about the FVA is that shape perturbations do not have a prescribed geometric form, as it is usual in the literature. Hence, there is no reason to work with differentiability concepts   such as 
 shape/topology derivative \cite{sz,top_book}.
This methodology has  proven its viability in numerous contributions that deal with various optimal design problems \cite{pen, t_jde, to,  hmt, mt_new2, tm_lin, kirchhoff, stokes, plate, oc_t}.
 We  refer here also  to the   recents works \cite{p1,p3} where convex  approximation schemes and  optimality systems for optimal design problems governed by non-smooth PDEs were provided. 
As a possible drawback of the FVA we mention 
its
  limitation to dimension two (which stems from the fact that it uses a  Hamiltonian technique
based on the Poincar\'e-Bendixson theory \cite[Ch.\,10]{hsd}). We point to  {\cite{it_h}} where  some hints
on the possible relaxation of the dimensional restriction are given.

In the context of the FVA, the boundaries of the domains appear as zero level sets of certain shape functions   defined over a larger fixed domain. This idea  is mostly associated with the prominent  level set method (LSM) \cite{os, 3}.
However, the last-mentioned approach  is   concerned with  the time evolution of  curves and/or surfaces and we note that LSMs are mainly employed for numerical purposes; cf.\,also \cite{1,2} for rigorous mathematical investigations. 
While our admissible shapes  shall arise as  sublevel sets as well, the FVA   is essentially different from the LSM, as it   is strongly based on an optimal control framework, where the level set function is the control.

The FVA 
allows one to switch from the shape optimization problem to a control problem in a function space. The advantage of the latter is that it is more amenable to the application of tools from optimal control theory.
The new  admissible set   consists of    shape functions  (also termed level functions or parametrizations). These belong   to a certain family   $\FF$ that has the capacity to generate all admissible domains $\O \in \OO$ \cite[Prop.\,2.8]{p1}, \cite[Thm.\,4.1 and 4.2]{dz}.

If $\OO$ is the set of all non-empty $\CC^2$ subdomains of $D\subset \R^2$ whose boundaries do not intersect $\partial D$, this transfer  takes place via   the relation
\[\O=\O_g:=\{x \in D: g(x)< 0\}, \quad g \in \FF,\]where $\FF$ is  the set of shape functions defined as \begin{equation}\label{f}\FF:=\{g \in C^{2}(\bar D):|\nabla g(x)|+|g(x)|>0 \ \forall\,x\in D,\ g(x)>0 \ \forall\,x\in\partial D,\  g^{-1}((-\infty,0]) \neq \emptyset\}. \end{equation}
Note that, given $g \in \FF$, the $\CC^2$ set $\Omega_g$ may have many components (their number is finite by \cite[Prop.\,2]{pen}), but it is always possible to redefine $g$ so that $\O_g$ is a domain \cite[Lem.\,2.9]{p1} (in section \ref{sec:so} we select a component  by asking that $E \subset \O_g$, where $E$ is  an observation subdomain). However, the boundary of this particular domain $\O_g$ may be a union of several closed curves, that is, $\O_g$ may have holes. Hence, the FVA is related to topological optimization too and we want to underline this aspect.

The \textit{main aim of this paper} is to introduce  a   direct approach for   differentiating solutions of \textit{non-smooth}  PDEs  with respect to perturbations of $\CC^2$ planar domains in the context of the FVA. We focus on the following Dirichlet problem:
 \begin{equation}\label{eq0} \begin{aligned}
-\laplace y + \beta(y  ) &=f   \quad \text{in }\O ,\\\quad y  &= 0 \quad \text{on }\partial \O ,\end{aligned}\end{equation}where $\beta$ is  a not necessarily differentiable function, cf.\,Assumption \ref{assu:stand} for details. 
As already mentioned,  $D\subset \R^ 2$ is a   fixed bounded hold-all domain. For the  right-hand side of the PDE we assume $f\in L^p(D),\ p>2$, which ensures the maximal regularity of the state $y \in W^{2,p}(\O)\cap H_0^1(\O) $, since $\O \in \CC^2$. 

Given $g \in \FF$, we consider so-called \textit{functional variations} $g+\l h$, $\l>0$ ``small'', where $h\in \FF$ is fixed \cite{mnt}.   That is,  the perturbation of the domain $\O=\O_g$  is $\O_{g+\l h}$. We are thus concerned with  the investigation of 
\begin{equation}\label{diff_s}\lim_{\l \searrow 0}\frac{\SS(g+\l h)-\SS(g)}{\l},\end{equation}
where $\SS:g  \mapsto y_g $ is the solution operator of \eqref{eq0} with $\O=\O_g$. 
{Because of the non-smooth  structure of  \eqref{eq0}, $\SS$ is not expected to be G\^ateaux-differentiable and we remark here that the sensitivity analysis of non-smooth problems on perturbed domains has been addressed in the literature  only for VIs  so far  \cite{sign_shape, sign_shape0,sz, sok,hs, hl, nsz}. 

 We point out that, to the best of our knowledge, the procedure  in the present manuscript  is entirely novel even for smooth problems. A less complex situation for $\CC^\infty$ domains in arbitrary dimension was discussed in  \cite[Sec.\,3]{2}. There  the connection between the shape derivative of $\JJ$ and the derivative of $g\mapsto \JJ(\O_g)$ was established, but only for shape functionals that \textit{do not} depend on the state. We emphasize that once we compute and analyse \eqref{diff_s}, we are in the position to calculate the directional derivative of $g\mapsto \JJ(\O_g, y_g)$ and  since we deal with a non-smooth PDE, the word "directional" is essential here. 
 
 When examining \eqref{diff_s}, the first question that will arise is how does the directional derivative of $\SS$  look like. It turns out that the limit in \eqref{diff_s} yields in fact   the shape derivative of $y_{\O_g}$, see Remark \ref{rem:sd}. 
Another fundamental question is  in which space does the convergence   \eqref{diff_s} hold,   the challenge here being that  $\SS(g)$ maps to   $ W^{2,p}(\O_g)\cap H_0^1(\O_g) $, which changes when perturbing the domain. Simply extending   $\SS(g)$ by zero is not   a solution, as the  $W^{2,p}$ regularity of the state gets lost outside $\O_g$. We underline that this   regularity is  important as it ensures that $\nabla y_g$ is continuous and the boundary condition in the 'linearized' PDE \eqref{q}-\eqref{q0} is well defined.
To tackle these issues we shall inspect  the convergence of the difference quotient in \eqref{diff_s} mainly in $\O_g \cap \O_{g+\l h}$ and in particular on the boundary of $\O_g \cap \O_{g+\l h}$, since  it turns out that this dominates the   convergence in the interior of the common domain (Proposition \ref{prop:lr}). Our key tools to begin with are the representation of $\CC^2$ planar closed curves via Hamiltonian systems \cite{pen}, their approximating behaviour \cite{p3} and differentiability properties \cite{oc_t}. When we go deep in the analysis we see that an essential point is to observe that a certain part of the boundary of $\O_g \cap \O_{g+\l h}$ remains fixed with varying $\l$, see \eqref{b_ol} and figure \ref{fig:ol}. We emphasize that the $W^{2,p}$ regularity of the state as well as the stability of the boundary perturbations $\partial \O_{g+\l h}$ play  at different stages of the manuscript   fundamental roles.

The first two main results of the manuscript are Theorems \ref{thm:h1} and \ref{thm:lr}, where we establish 
two different types of differentiability  results for $\SS$. These will allow us to consider in the objective pointwise observation  and derivatives of the state on an observation set (due to Theorem  \ref{thm:h1}, cf.\,subsection \ref{sec:gobs}) as well as  distributed observation terms (thanks to Theorem \ref{thm:lr}, see subsection \ref{sec:do}).


A second key contribution of this paper is the introduction of the notion of \textit{locally optimal domain} and the derivation of necessary optimality conditions therefor. 
This   concept shall be presented in the framework of  the optimal design problem
\begin{equation}\tag{\ensuremath{P_\O}}\label{p_sh0}
 \left.
 \begin{aligned}
  \min_{\O \in \OO, E \subset \subset \O} \quad &\JJ(\O, y_\O)\\     \text{s.t.} \quad & 
   \eqref{eq0}\end{aligned}
 \quad \right\}
\end{equation}where   the observation set $E\subset \subset D$ is a non-empty subdomain of $D$. The problem \eqref{p_sh0} can be rewritten as an optimal control problem in function space, which in its reduced from reads
\begin{equation}\label{min_j}\min_{g \in \FF, g<0 \text{ in }\bar E} \JJ(\O_g,y_g).\end{equation}
We show that for  each locally optimal shape $\O=\O_g$  of \eqref{p_sh0}, all associated  parametrizations $g\in \FF$ are local minimisers  of \eqref{min_j} in a certain sense (Theorem \ref{prop:loc}).  We note that the admissible set in \eqref{min_j} is not  necessarily convex. To obtain that the directional derivative of the reduced objective $j:g\mapsto \JJ(\O_g,y_g)$ is non-negative in all feasible directions $h$, we only need that $\{g \in \FF:g<0 \text{ in }\bar E\}$ is open, which is indeed the case. The derivation of necessary optimality conditions in primal form for locally optimal shapes (in the context of non-smooth PDEs)  constitutes a second main finding of the present manuscript (Theorem \ref{prop:necc}).



We end the introduction by putting    into evidence the differences between our technique  and other comparable works. We start with the discrepancies with other papers  in the context of the FVA. Though this particular method has often been used   for differentiation purposes, the novelty in the present paper is that one does not alter the original PDE; we recall that our idea is to look at the sensitivity of the solution operator of  \eqref{eq0} (with $\O=\O_g$) with respect to shape functions only, cf.\,\eqref{diff_s}. The  existing contributions on the topic address only smooth problems and may be divided into two categories. A large part of them resorts to extensions   on the hold-all domain by means of a regularization of the Heaviside function  \cite{stokes, plate, kirchhoff, mt_new2}, which yields an approximating PDE on the fixed domain. Other works \cite{hmt, t_jde,to} are based on the  so-called Hamiltonian approach, which is more direct. Therein, an additional control variable defined on $D$ is introduced so that the PDE is posed in the hold-all domain as well. However, the boundary condition appears  as a state-control constraint   and  the constrained optimal  control problem is in fact equivalent to the original shape optimization problem \cite[Cor.\,3.1]{t_jde}. That is, even though the extended PDE is obviously not the same as the original one (defined in $\O_g$),  at the minimization level, the method used in  \cite{hmt, t_jde,to} is exact.

We now turn our attention to the rest of the literature on  the sensitivity analysis of problems on perturbed domains with emphasis on non-smooth problems \cite{sign_shape, sign_shape0,sok,hs, hl, nsz} and 
 we see that  one of the most common approaches is the prominent speed/velocity method \cite{sz}. As opposed to the FVA, this has the advantage that it works in arbitrary dimensions too.
 The typical idea  is to introduce   a  smooth transformation of the domain, generated by a family of vector fields, which associates to each point in $\O$  a unique point in the perturbed shape and viceversa. In this manner, the   perturbed  smooth domains are of the type $T_\l(\O)$, where $T_\l$ is a bijection satisfying certain conditions; as a special case we mention the so-called {perturbation of identity} where the variations are generated via $T_\l(\O)=(\mathbb{I}+\l \theta) \Omega$, where $\theta$ is  a smooth vector field (the velocity field). The speed method allows one to rewrite the perturbed equation in the original unperturbed domain   $\O$. Deriving w.r.t.\,$\l$  then yields the so-called material derivative, by means of which the shape derivative of the state can be computed. This procedure is explained and applied at length in \cite{sz} and has been used by numerous authors. We refer here in particular to \cite[Sec.\,5.3, Thm.\,5.3.1]{hp}, where the shape derivative for weak formulations of PDEs is calculated by   requiring $\O$ to be only measurable, bounded and open. However, we underline that \cite{hp} highly relies on  the implicit function theorem so that  the results cannot be  expected to hold for non-smooth problems. Another remarkable aspect about the aforementioned contributions  is that, since they involve the computation of the so-called material derivative, they require the given right-hand side, in our case $f$, to posses weak derivatives (mostly $f \in H^1(D)$). By contrast, we do not transfer the perturbed PDE to the original domain $\O=\O_g$, but   we examine \eqref{diff_s} on the common domain $\O_ g \cap \O_{g+\l h}$, see subsection \ref{sec:dd}. That is, we do not compute material derivatives, so that $f\in L^p(D)$ is sufficient in the present manuscript and we want to stress this matter.

Though our functional  variational concept may seem different from the classical purely geometrical one \cite{sz}, we will see that, 
we are in fact proposing a novel way of computing the shape derivative, cf.\,section \ref{sec:sm} and Remark \ref{rem:sd}. While this is restricted to $
\CC^2$ planar domains, it has the advantage that the involved data $f$ and $\beta$ do not require smoothness assumptions (as already pointed out).
It turns out that the boundary of our perturbed domain $\O_{g+\l h}$ arises as  $T_\l(\partial \O_g)$, where $T_\l$ is the flow associated to  a certain family of vector fields $\theta \in \CC([0,\l_0),\CC_c^{0,1}(D;\R^2))$ for which $\theta(0)=W$, see Definition \ref{def:w}.  This   (initial) velocity field $W$    can be put in connection with a linearization of a Hamiltonian system which describes the boundary of the original domain, cf.\,\eqref{w}.

Finally, let us   state an essential matter regarding  the different perspective offered by the FVA when compared with the LSM.
In the LSM setting one employs the Hamilton Jacobi equation   to update the level set function in numerical algorithms. This describes the evolution of 
a mapping $g_\l$ with respect to $\l$ (which there plays the role of the time variable). As  a result of perturbing the zero level set of $g_0=g$ by a known vector field $\theta:\{g=0\} \to \R^2$, the Hamilton Jacobi equation reduces to finding $g_\l$ so that
\begin{equation}\label{hj}\frac{d}{d\l}g_\l(x)+\theta (x)\nabla g_\l(x)=0 \quad  \text{in }D,\quad  g_0=g.\end{equation}
The FVA theory offers an "other way around" point of view: we already know the perturbations of the functions, i.e., $g_\l=g+\l h$, and, as we will see, we can compute the velocity  field $\theta=W$ associated to the  variation of the zero level set of $g$, cf.\,Definition \ref{def:w}. Indeed, this satisfies \eqref{hj} with $g_\l=g+\l h$ on $\{g=0\}$, as shown by \eqref{eq:W} below. 

\subsection*{Outline of the paper} The present work is organized as follows. After introducing the notation, we recall in Section \ref{sec:p} some essential previous findings  concerning the description of $\CC^2$ planar curves via Hamiltonian systems and other fundamental properties. In section \ref{sec:3} we state the precise assumptions on the fixed  data in \eqref{eq0} after which we examine the continuity properties of the state with respect to domain perturbations (subsection \ref{sec:cont_S}). Subsection \ref{sec:dd} is entirely focused on the computation of \eqref{diff_s}. Here we introduce the candidate for the directional derivative of $\SS$ and  we investigate 
\eqref{diff_s} on the common domain $\O_ g \cap \O_{g+\l h}$ as well as on its boundary. This shall result in the first main contribution of the paper, namely Theorem \ref{thm:h1}. In Proposition \ref{prop:outside} we gather information about  \eqref{diff_s} outside $\O_ g \cap \O_{g+\l h}$,  which yields   the second main finding of this work, i.e., Theorem \ref{thm:lr}. Starting with section \ref{sec:so} we focus on the optimal design problem \eqref{p_sh0} and its equivalent formulation as a control problem. The main novelty here is due to  subsection \ref{sec:lo}, where we introduce the concept  of locally optimal shape and its relation with local optimality for control problems such as \eqref{min_j}. Section \ref{sec:noc} is devoted to the derivation of necessary optimality conditions for locally optimal shapes. These are stated in a general framework in Theorem \ref{prop:necc}.   The main differentiability findings from  section \ref{sec:3} then allow  us to apply Theorem \ref{prop:necc} for 
 objectives with  pointwise observation  and derivatives of the state on an observation set (subsection \ref{sec:gobs}) as well as  distributed observation terms (subsection \ref{sec:do}). The final section \ref{sec:sm} of the manuscript is devoted to a comparison between the FVA and the speed/velocity method, and may be seen as a confirmation of the differentiability  results obtained in section \ref{sec:3}.



\subsection*{Notation}
Throughout this manuscript, $\AA \subset \subset D$ means that the closure of the set $\AA$ is a compact subset of $D$ and we use the symbol 
$|\cdot|$ for the Euclidean norm. In addition, $\mu(\AA)$ is the measure of a Lebesgue measurable set $\AA\subset \R^2$ and $\CC_c(\AA)$ is the set of all continuous functions on $\AA\subset \R^2$ with compact support. We use the short-hand notation  $\{g=0\}$ for the zero level set  $\{x\in D:g(x)=0\}$ and if $\{g=0\}$ is a union of closed curves, $\{g=0\}^{\circ}$ denotes  a component of this union (a single periodic curve). The mapping $\dist_\AA:\R^2 \to \R^+$ associates to a point its distance to  a compact set $\AA\subset \R^2$. The    Hausdorff-Pompeiu  distance between two  compact sets $\AA,\BB\subset \R^2$   is defined as 
\begin{equation}\begin{aligned}
d_{\HH}(\AA , \BB):=\max \{\max_{x \in \AA}\dist(x,\BB),\max_{\widetilde x \in \BB}\dist(\widetilde x, \AA)\}.\end{aligned}\end{equation}
Given a normed space $X$, the 
closed ball centered at $0\in X$ with radius $M>0$ is denoted by $\overline {B_X(0,M)}$.

\section{Preliminaries}\label{sec:p}In this section, we collect some (meanwhile well-established) key findings about the  description of planar curves via Hamiltonian systems \cite{pen}, their approximating behaviour \cite{p3} and differentiability properties \cite{oc_t}. 
To this end, we recall the definition of our set of admissible shape functions: \begin{equation}\label{eq:f}\FF=\{g \in \CC^{2}(\bar D):|\nabla g(x)|+|g(x)|>0 \ \forall\,x\in D,\  g(x)>0 \ \forall\,x\in\partial D, \ g^{-1}((-\infty,0]) \neq \emptyset\}. \end{equation}
We begin with a fundamental result that stands at the core of the FVA:
\begin{proposition}[{\cite[Prop.\,2]{pen}}]\label{prop:ham}
For each $g \in \FF$, the level set $\{g=0\}$  is a finite union of disjoint closed $\CC^2$ curves, without self intersections and not intersecting $\partial D$. Each component of $\{g=0\}$ is parametrized by the solution $z :[0,\infty)\to \{g=0\}^\circ$ of  the Hamiltonian system \begin{equation}\label{ham}
 \left\{
 \begin{aligned}
({z_1})'(t) &= -\partial_2 g(z(t)), 
    \\   (z_2)'(t) &= \partial_1 g(z (t)),
   \\ {z}(0) &= x_0, \quad  t \in [0,\infty)\end{aligned}
 \quad \right.
\end{equation} when some initial point  $x_0 \in \{g=0\}^\circ$ is chosen.\end{proposition}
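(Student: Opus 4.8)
The plan is to combine the implicit function theorem with the classification of compact one-dimensional manifolds, and then to recognise the Hamiltonian flow as the announced parametrisation. First I would note that the three defining conditions of $\FF$ pin down the topology of $\{g=0\}$: since $g>0$ on $\partial D$, the set $\{g=0\}$ does not meet $\partial D$, hence $\{g=0\}\subset D$; being the preimage of a closed set under the continuous map $g$ and a subset of the bounded set $\bar D$, it is compact. The condition $|\nabla g(x)|+|g(x)|>0$ forces $\nabla g\neq 0$ on $\{g=0\}$, so that $0$ is a regular value of $g\in\CC^2(\bar D)$. Applying the implicit function theorem in a neighbourhood of each point of $\{g=0\}$ shows that $\{g=0\}$ is a $\CC^2$ embedded one-dimensional submanifold of $D$ without boundary. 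Because it is compact it has only finitely many connected components (each component is open in the manifold, and the components form a disjoint open cover), and each connected component, being a compact connected one-manifold without boundary, is $\CC^2$-diffeomorphic to a circle; in particular it is a closed $\CC^2$ curve without self-intersections. This is precisely the first assertion.

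Next I would turn to the Hamiltonian system. The right-hand side of \eqref{ham} is the vector field $X_g:=(-\partial_2 g,\partial_1 g)$, which is of class $\CC^1$ because $g\in\CC^2$; in particular it is locally Lipschitz, so \eqref{ham} has a unique maximal solution for each $x_0$. The decisive identity is $\nabla g\cdot X_g=-\partial_1 g\,\partial_2 g+\partial_2 g\,\partial_1 g=0$, i.e.\ $X_g$ is everywhere tangent to the level sets of $g$. Hence along any solution $z$ of \eqref{ham} one has $\tfrac{d}{dt}g(z(t))=\nabla g(z(t))\cdot z'(t)=0$, so $g(z(t))\equiv g(x_0)=0$: the trajectory stays on $\{g=0\}$ and, by continuity and connectedness, inside the component $\{g=0\}^\circ$ through $x_0$. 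Since $\{g=0\}^\circ$ is compact and invariant, the maximal solution cannot escape a compact set in finite time, so it is defined on all of $[0,\infty)$, which yields the domain claimed in the statement.

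Finally I would show that $z$ covers the whole component. On $\{g=0\}^\circ$ one has $|X_g|=|\nabla g|>0$, so the restriction of $X_g$ to this manifold is a nowhere-vanishing $\CC^1$ vector field on a set $\CC^2$-diffeomorphic to $S^1$; for such a field the angular coordinate evolves strictly monotonically with speed bounded away from zero, so every integral curve is periodic and surjective onto the circle --- the elementary one-dimensional instance of the Poincar\'e-Bendixson picture. Consequently $z\colon[0,\infty)\to\{g=0\}^\circ$ is periodic and onto, i.e.\ a parametrisation of the component, as claimed. The step requiring the most care is the passage from the purely local, IFT-based description of $\{g=0\}$ to the global conclusions --- finiteness of the components and, above all, the identification of the Hamiltonian flow with a surjective periodic parametrisation of each component; the remaining ingredients are a one-line computation and standard ODE arguments.
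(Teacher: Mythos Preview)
Your argument is correct. The paper does not supply its own proof of this proposition; it is quoted verbatim from \cite[Prop.\,2]{pen} and stated without justification. Your route --- regularity of the level set via the implicit function theorem, compactness and the classification of one-manifolds to obtain the finite union of disjoint embedded circles, conservation of $g$ along the Hamiltonian flow, and the nowhere-vanishing vector field argument on each circle to force periodicity and surjectivity --- is exactly the standard proof and matches what the paper alludes to in the introduction when it mentions that the FVA relies on ``a Hamiltonian technique based on the Poincar\'e--Bendixson theory''. There is nothing to correct.
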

\begin{remark}Note that $z:[0,T)\to \{g=0\}^\circ$ is a bijection, where $T>0$ stands for the period of the closed curve $\{g=0\}^\circ$. If $|\nabla g(\cdot)|=1$ on $\{g=0\}^\circ$, its period is given by 
 the length of the curve, see e.g.\,\cite{pressley}.
\end{remark}

Let $g \in \FF$. Given  a second   function $h \in \CC^2(\bar D)$ (not necessarily in $\FF$), we call {functional variations}
{\cite{mnt}} the perturbations $g + \lambda h$, $\lambda \in \mathbb{R}$  ``small''. The next result goes to show that the preceding Proposition \ref{prop:ham} remains unaffected by  small enough perturbations of the original shape function $g$.

\begin{lemma}[{\cite[Prop.\,3.4]{p3}}]\label{lem:glh}
For each  $g \in \FF$ and $h \in \CC^2(\bar D)$, there exists $\l_0>0$ so that for all $\l \in (0,\l_0)$, we have $g+\l h \in \FF$ as well. That is,
 the assertion of Proposition \ref{prop:ham} stays true for the level set $\{g+\l h=0\}$ and each of its components is parametrized by the solution of  the Hamiltonian system \begin{equation}\label{ham_e}
 \left\{
 \begin{aligned}
   z_{\l,1} '(t)&=- {\partial_2  (g+\l  h)}  (z_\l (t)) ,
    \\    z_{\l,2} '(t)&={\partial_1 (g+\l  h)} (z_\l(t)) ,
   \\   z_\l(0)&=x^\l_0 ,\quad  t \in [0,\infty) \end{aligned}
 \quad \right.
\end{equation} when some initial point $x^\l_0 \in D$ is chosen on each $\{g+\l h=0\}^\circ$. \end{lemma}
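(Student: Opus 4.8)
The plan is to verify directly that each of the three defining properties in \eqref{eq:f} is stable under the perturbation $g\mapsto g+\l h$ once $\l>0$ is small enough; the concluding claim about the Hamiltonian parametrization \eqref{ham_e} is then obtained simply by applying Proposition \ref{prop:ham} to $g+\l h$ in place of $g$. Since $D$ is bounded and $g,h\in\CC^2(\bar D)$, the functions $g,h,\nabla g,\nabla h$ are all bounded on the compact set $\bar D$; write $M:=\|h\|_{\CC(\bar D)}$. A preliminary observation is that, although the first condition in \eqref{eq:f} is stated only on $D$, it actually holds on all of $\bar D$, because the second condition forces $g>0$ (hence $|\nabla g|+|g|>0$) on $\partial D$.

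For the boundary positivity: by continuity and compactness $g\ge c$ on $\partial D$ for some $c>0$, so $g+\l h\ge c-\l M>0$ on $\partial D$ as soon as $\l<c/M$. For the non-emptiness of the sublevel set, I would first note that $\{g<0\}$ is a non-empty open set: by Proposition \ref{prop:ham} the level set $\{g=0\}$ is a non-empty union of closed $\CC^2$ curves, and the first condition in \eqref{eq:f} gives $|\nabla g|>0$ along them, so $g$ changes sign across each curve. Choosing $x_0\in D$ with $g(x_0)=-\delta<0$, we get $(g+\l h)(x_0)\le-\delta+\l M<0$ for $\l<\delta/M$.

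The only condition whose persistence is not immediate is the non-degeneracy $|\nabla(g+\l h)|+|g+\l h|>0$. The idea is to pass to the smooth, non-negative surrogate $\Phi_f:=|\nabla f|^2+f^2$, which satisfies $\Phi_f(x)>0\iff|\nabla f(x)|+|f(x)|>0$. By the preliminary observation, $\Phi_g$ is continuous and strictly positive on the compact set $\bar D$, hence $\Phi_g\ge c_0>0$ there. Expanding the square one obtains
\[
\Phi_{g+\l h}(x)=\Phi_g(x)+2\l\bigl(\nabla g(x)\cdot\nabla h(x)+g(x)h(x)\bigr)+\l^2\Phi_h(x)\ \ge\ c_0-2\l C_1,
\]
with $C_1:=\sup_{x\in\bar D}\bigl|\nabla g(x)\cdot\nabla h(x)+g(x)h(x)\bigr|<\infty$. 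Hence $\Phi_{g+\l h}>0$ on $\bar D$, and in particular $|\nabla(g+\l h)|+|g+\l h|>0$ on $D$, whenever $\l<c_0/(2C_1+1)$.

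Taking $\l_0:=\min\{c/M,\ \delta/M,\ c_0/(2C_1+1)\}>0$ yields $g+\l h\in\FF$ for every $\l\in(0,\l_0)$, and applying Proposition \ref{prop:ham} to $g+\l h$ delivers the Hamiltonian description \eqref{ham_e} of each component of $\{g+\l h=0\}$. I do not expect a genuine obstacle here: the argument is a soft compactness/perturbation estimate. The one point calling for care is the non-degeneracy step, where one must argue with the coupled quantity $\Phi_f=|\nabla f|^2+f^2$ rather than with $f$ and $\nabla f$ separately, and crucially use that $\Phi_g$ is bounded away from zero on the whole of $\bar D$ -- which itself rests on combining the non-degeneracy on $D$ with the strict positivity of $g$ on $\partial D$. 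A minor secondary subtlety is the appeal to Proposition \ref{prop:ham} (and $|\nabla g|>0$ on $\{g=0\}$) to guarantee that $g$ is strictly negative somewhere, not merely nonpositive.
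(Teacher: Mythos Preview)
Your proof is correct and follows essentially the same approach as the paper: both argue by compactness of $\bar D$ that each defining condition of $\FF$ is stable under the $\CC^2$-small perturbation $g\mapsto g+\l h$, and then invoke Proposition~\ref{prop:ham}. The only cosmetic differences are that the paper appeals directly to the uniform $\CC^2$-convergence $g+\l h\to g$ rather than expanding the squared surrogate $\Phi_f$, and that for the existence of a strictly negative point it combines $g^{-1}((-\infty,0])\neq\emptyset$ with the lower bound \eqref{gd} directly instead of routing through Proposition~\ref{prop:ham}; your extra care in extending the non-degeneracy to $\bar D$ via the boundary condition is a nice touch that the paper leaves implicit.
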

\begin{proof}The assertion follows by the same arguments used in the proof of \cite[Lem.\,3.1]{p3}. As a consequence of $g \in \FF$ combined with  the compactness of $\bar D$,
we have by Weierstrass theorem that
\begin{equation}\label{gd}|\nabla g|+|g| \geq  \delta \quad \text{in }\bar D\end{equation}for some $\delta>0.$ Since $g+\l h \to g$ in $\CC^2(\bar D)$ as $\l \searrow 0$, it follows that there exists $\l_0>0$, depending only on $\delta>0$ and the given data,  so that the first condition in \eqref{eq:f} is satisfied by $g+\l h$ for all $\l \in (0,\l_0)$. A similar argument  yields that the second inequality in \eqref{eq:f} is fulfilled as well. Finally, we have by \eqref{gd} and $g^{-1}((-\infty, 0])\neq \emptyset$ that there exists a point $\tilde x \in D$ so that $g(\tilde x)<0$. Since $g+\l h$ converges uniformly towards $  g$ we get  that $(g+\l h)(\tilde x)<0$ for $\l>0$ small enough dependent only on $\tilde x$, and thus on $g$. This allows us  to deduce the desired statement.
\end{proof}

It turns  out that the finite number of components of $\{g+\l h=0\}$ is independent of $\l$, and, in fact, equal to the number of components of $\{g=0\}$, provided that $\l$ is small enough. This is a consequence of the following.
  
\begin{proposition}[{\cite[Lem.\,3.7, 3.8]{p3}}, Uniqueness of the approximating curve]\label{lem:unique}
Let $g \in \FF$ and $h \in \CC^2(\bar D)$. 
Given $\{g=0\}^\circ$,  there exists $\e>0$ and  $\l_0(\e)>0$ so that for each $\l \in (0,\l_0(\e)]$ there is exactly one closed curve $\{g+\l h=0\}^\circ$ satisfying
\[\{g+\l h=0\}^\circ \subset \{x \in  D:  \dist(x,\{g=0\}^\circ)< \e\} .\]  
Moreover,  we have the convergence 
\begin{equation}\label{h_d_c}
d_{\HH}(\{ g+\l h=0\} , \{g=0\})\to 0 \quad \text{as }\l \searrow 0,\end{equation}
where $d_{\HH}(\{ g+\l h=0\} , \{g=0\})$ denotes the Hausdorff-Pompeiu  distance between the compact sets $\{ g+\l h=0\}$ and $ \{g=0\}$, i.e.,
\begin{equation}\begin{aligned}
&d_{\HH}(\{ g+\l h=0\} , \{g=0\})\\ &\quad:=\max \{\max_{x \in \{g=0\}}\dist(x,\{ g+\l h=0\}),\max_{\widetilde x \in \{ g+\l h=0\}}\dist(\widetilde x, \{g=0\})\}.\end{aligned}\end{equation}
\end{proposition}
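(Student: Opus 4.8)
The plan is to represent, for small $\l$, the level set $\{g+\l h=0\}$ as a normal graph over $\{g=0\}$ inside a \emph{fixed} tubular neighbourhood, and to read off both the uniqueness and the Hausdorff convergence from the strict monotonicity of $g+\l h$ across that neighbourhood. First I would fix the tube: since $g\in\FF$ vanishes on $\{g=0\}$, the bound \eqref{gd} forces $|\nabla g|\geq\delta$ there, so each component of $\{g=0\}=\bigsqcup_{i=1}^N\{g=0\}^\circ_i$ (finite by Proposition \ref{prop:ham}) is a regular $\CC^2$ Jordan curve, and the Hamiltonian parametrisations $z_i\colon[0,T_i)\to\{g=0\}^\circ_i$ are immersions ($|z_i'|=|\nabla g\circ z_i|\geq\delta$). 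With $\nu_i:=(\nabla g/|\nabla g|)\circ z_i$, the tubular neighbourhood theorem supplies an $\e>0$, depending only on $g$, such that the $\e$-tubes $U_i:=\{x\in D:\dist(x,\{g=0\}^\circ_i)<\e\}$ are pairwise disjoint, each $U_i$ is in $\CC^1$-bijection with $[0,T_i)\times(-\e,\e)$ via $x=z_i(s)+t\nu_i(s)$, $|t|=\dist(x,\{g=0\}^\circ_i)$, and $\nabla g(z_i(s)+t\nu_i(s))\cdot\nu_i(s)\geq\delta/2$ on $[0,T_i)\times(-\e,\e)$. This $\e$ is the one claimed in the statement.

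\textbf{Graph representation; existence and uniqueness of the nearby curve.} For $\l$ with $g+\l h\in\FF$ (Lemma \ref{lem:glh}) I would consider $\varphi^i_\l(s,t):=(g+\l h)(z_i(s)+t\nu_i(s))$ on $[0,T_i)\times(-\e,\e)$. Using $g(z_i(s))=0$ and $g+\l h\to g$ in $\CC^1(\bar D)$, a short computation gives $\l_0(\e)>0$ such that for every $\l\in(0,\l_0(\e)]$ and all $i$ one has $\partial_t\varphi^i_\l\geq\delta/4$ and $\varphi^i_\l(s,-\e)<0<\varphi^i_\l(s,\e)$, so $t\mapsto\varphi^i_\l(s,t)$ has a unique zero $t^i_\l(s)\in(-\e,\e)$, which by the mean value theorem at $t=0$ satisfies $\|t^i_\l\|_{\infty}\leq 4\l\|h\|_{\CC(\bar D)}/\delta$. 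Strict monotonicity in $t$ then yields $\{g+\l h=0\}\cap U_i=C^i_\l:=\{z_i(s)+t^i_\l(s)\nu_i(s):s\in[0,T_i)\}$; since $\varphi^i_\l(\cdot,t)$ is $T_i$-periodic, so is $t^i_\l$, hence $C^i_\l$ is a continuous image of a circle — compact and connected — and, being open and closed in $\{g+\l h=0\}$, a component of $\{g+\l h=0\}$, of class $\CC^2$ by Lemma \ref{lem:glh}. As $C^i_\l$ lies in the $\e$-neighbourhood of $\{g=0\}^\circ_i$, while \emph{any} component of $\{g+\l h=0\}$ lying there must be contained in $U_i$ and therefore equal $C^i_\l$, this is precisely the asserted uniqueness.

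\textbf{Hausdorff convergence and the count of components.} Here I would first show $\max_{\tilde x\in\{g+\l h=0\}}\dist(\tilde x,\{g=0\})\to0$ as $\l\searrow0$ by contradiction: otherwise compactness of $\bar D$ produces $\l_n\searrow0$ and $x_n\in\{g+\l_n h=0\}$ with $x_n\to x^\ast$ and $\dist(x_n,\{g=0\})\geq\eta>0$; but $g(x_n)=-\l_n h(x_n)\to0$ forces $g(x^\ast)=0$, and since $g>0$ on $\partial D$ this means $x^\ast\in\{g=0\}$, a contradiction. Consequently $\{g+\l h=0\}\subset\bigcup_iU_i$ for $\l$ small, so by the previous step $\{g+\l h=0\}=\bigsqcup_{i=1}^NC^i_\l$, i.e.\ $\{g+\l h=0\}$ has exactly $N$ components. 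Finally $d_\HH(C^i_\l,\{g=0\}^\circ_i)\leq\|t^i_\l\|_{\infty}\to0$ for each $i$, and since the Hausdorff--Pompeiu distance of finite unions is bounded by the maximum of the distances of the pieces, \eqref{h_d_c} follows (its two one-sided estimates being contained in the above).

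\textbf{Expected main obstacle.} The hard part is the uniqueness claim — excluding a second, ``small'' loop of $\{g+\l h=0\}$ near a given $\{g=0\}^\circ$. This is exactly what the strict monotonicity of $t\mapsto(g+\l h)(z(s)+t\nu(s))$ buys, and that monotonicity rests on (i) propagating the lower bound $|\nabla g|\geq\delta$ from the curve to a tube of \emph{fixed} radius $\e=\e(g)$, and (ii) the uniform $\CC^1$ convergence $g+\l h\to g$ (the full $\CC^2$ convergence is needed only for the $\CC^2$ regularity of $C^i_\l$, which Lemma \ref{lem:glh} supplies anyway); throughout, the order of quantifiers — first $\e$ from $g$ alone, only then $\l_0(\e)$ — has to be respected. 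The remaining ingredients — the tubular neighbourhood theorem, the implicit function theorem for the smoothness of $t^i_\l$, and the compactness argument for \eqref{h_d_c} — are routine.
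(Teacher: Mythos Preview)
The paper does not supply its own proof of this proposition; it is quoted verbatim from \cite[Lem.\,3.7, 3.8]{p3} as a preliminary result in Section~\ref{sec:p}, so there is no in-paper argument to compare against. Your proof is correct and self-contained: the tubular-neighbourhood/normal-graph strategy cleanly delivers both uniqueness (via strict monotonicity of $t\mapsto(g+\l h)(z_i(s)+t\nu_i(s))$, which rests on $|\nabla g|\geq\delta$ on $\{g=0\}$ and the $\CC^1$ closeness of $g+\l h$ to $g$) and the Hausdorff convergence (via the explicit bound $\|t^i_\l\|_\infty\leq C\l$ together with the compactness argument ruling out far-away components).

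For context, the surrounding material in Section~\ref{sec:p} (Propositions~\ref{lem:vec} and~\ref{prop:diff}) and the Gronwall estimate in the proof of Theorem~\ref{prop:loc} suggest that the argument in \cite{p3} is Hamiltonian-based: one works with the trajectories $z_\l$ of \eqref{ham_e} and their $\CC^1$ convergence to $z$, extracting $d_\HH(\{g+\l h=0\},\{g=0\})\to 0$ from $\|z_\l-z\|_{\CC[0,T]}\to 0$ via Gronwall. Your route sidesteps the ODE machinery entirely and is arguably more elementary; it also yields the quantitative rate $d_\HH=O(\l)$ directly. Both approaches ultimately exploit the same structural fact---that $|\nabla g|$ is bounded below on $\{g=0\}$---but you encode it as transversality of the normal foliation to the level sets, whereas the Hamiltonian approach encodes it as non-degeneracy of the flow speed $|z'|=|\nabla g\circ z|$.
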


From now on, whenever we write 'approximating curve' of $\{g=0\}^\circ$ we refer to the unique curve $\{g+\l h=0\}^\circ$  from Proposition \ref{lem:unique} associated to it.
\begin{proposition}[{ \cite[Prop.\,2.6]{to}, \cite[Prop.\,3.10]{p3}}]\label{lem:vec}
Fix $g \in \FF$ and $h \in \CC^2(\bar D)$. Let $\{g=0\}^{\circ}$ be a closed curve with  periodicity  $T$ and let $\{g+\l h=0\}^{\circ}$ be its approximating curve with periodicity $T_{\l}$.
The following assertions are true:
\begin{itemize}
\item[(i)]  If $x_0^\l \to x_0,$ then, for each $B>0$, it holds  $   \| z_\l - z\|_{C^1([0,B];\R^2)} \to 0 $ \text{as }$\l \searrow 0,$ where $z_\l $   and $  z$ are  the trajectories of the Hamiltonian systems \eqref{ham_e} and \eqref{ham} corresponding  to $\{g+\l h=0\}^{\circ}$ and $\{g=0\}^{\circ}$, respectively;
\item[(ii)] $T_{\l} \to T$ as $\l \searrow 0.$
\end{itemize}
\end{proposition}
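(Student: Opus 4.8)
The plan is to view (i) as a continuous-dependence statement for ODEs with respect to the parameter and the initial datum, and (ii) as the continuity of the period of a periodic orbit under a $\CC^2$-perturbation of its Hamiltonian.

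For (i), observe that the right-hand sides of \eqref{ham} and \eqref{ham_e} are the $\CC^1$ vector fields $F:=(-\partial_2 g,\partial_1 g)$ and $F_\l:=F+\l\,(-\partial_2 h,\partial_1 h)$ on $\bar D$. By Proposition~\ref{lem:unique} one can fix, for all sufficiently small $\l$, a compact set $K\subset\subset D$ (a thin closed neighbourhood of $\{g=0\}^\circ$) that contains both $\{g=0\}^\circ=z([0,\infty))$ and $\{g+\l h=0\}^\circ=z_\l([0,\infty))$; thus the trajectories never leave the region where the fields are defined. On $K$ we have $\sup_K|F_\l-F|\le\l\,\|h\|_{\CC^1(\bar D)}$, and the $F_\l$ share a common Lipschitz constant $L$ (e.g.\ $L=\|g\|_{\CC^2(\bar D)}+\|h\|_{\CC^2(\bar D)}$) for small $\l$. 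Writing the two systems in integral form, subtracting, and applying Gronwall's inequality yields, for $t\in[0,B]$,
\[
|z_\l(t)-z(t)|\ \le\ e^{LB}\big(\,|x_0^\l-x_0|+B\,\l\,\|h\|_{\CC^1(\bar D)}\,\big),
\]
which tends to $0$ uniformly on $[0,B]$ as $\l\searrow0$, by the hypothesis $x_0^\l\to x_0$. The $\CC^1$ part then follows from $z_\l'-z'=F_\l(z_\l)-F(z)$ and the bound $|F_\l(z_\l)-F(z)|\le L|z_\l-z|+\l\|h\|_{\CC^1(\bar D)}$.

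For (ii) I may assume $x_0^\l\to x_0$ (the period is independent of the base point), so that (i) applies with any $B$. The bound $\liminf_{\l\searrow0}T_\l\ge T$ is elementary: if $T_{\l_n}\to T^*<T$ along a subsequence, then $T^*>0$, because $\{g+\l_n h=0\}^\circ$ has length at most $(\sup_K|\nabla(g+\l_n h)|)\,T_{\l_n}$ while, by (i), it contains $z_{\l_n}(0)\to z(0)$ and $z_{\l_n}(T/2)\to z(T/2)$, which are distinct since $z|_{[0,T)}$ is a bijection onto $\{g=0\}^\circ$; hence its length, and therefore $T_{\l_n}$, stays bounded away from $0$. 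Passing to the limit in $z_{\l_n}(T_{\l_n})=x_0^{\l_n}$ (using uniform convergence on $[0,T]$ and $T_{\l_n}\to T^*\in(0,T)$) then gives $z(T^*)=z(0)$, contradicting that bijectivity.

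\emph{The main obstacle} is the reverse inequality $\limsup_{\l\searrow0}T_\l\le T$: $\CC^1$-closeness of $z_\l$ to $z$ on a fixed interval does not by itself prevent the approximating orbit from winding around several times (or its period from blowing up). To handle this I would leave the Hamiltonian parametrisation and represent $\{g+\l h=0\}^\circ$ as a graph over $\{g=0\}^\circ$. Fix a $\CC^1$ tubular neighbourhood $\Psi:(\R/\ell_0\R)\times(-\e,\e)\to N_\e\subset\subset D$ of the $\CC^2$ Jordan curve $\{g=0\}^\circ$, with $\Psi(\cdot,0)$ its arc-length parametrisation and $\ell_0$ its length. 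Since $\partial_\tau\Psi(\sigma,0)$ is a unit normal, parallel to $\nabla g(\Psi(\sigma,0))$, \eqref{gd} gives $\partial_\tau\big((g+\l h)\circ\Psi\big)=\nabla(g+\l h)\cdot\partial_\tau\Psi\ne0$ on $N_\e$ for all small $\l$; as $\{g+\l h=0\}$ does meet $N_\e$ (Proposition~\ref{lem:unique}), the implicit function theorem represents $\{g+\l h=0\}\cap N_\e$ as a single $\CC^1$ graph $\tau=\tau_\l(\sigma)$, $\sigma\in\R/\ell_0\R$, with $\tau_\l\to0$ in $\CC^1$; being connected and contained in $N_\e$, this graph is exactly the approximating curve $\{g+\l h=0\}^\circ$ (again Proposition~\ref{lem:unique}), which therefore winds around once. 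Finally, along a Hamiltonian orbit the speed equals $|\nabla(g+\l h)|$, so the period has the arc-length representation
\[
T_\l=\oint_{\{g+\l h=0\}^\circ}\frac{ds}{|\nabla(g+\l h)|}=\int_0^{\ell_0}\frac{\big|\partial_\sigma[\Psi(\sigma,\tau_\l(\sigma))]\big|}{\big|\nabla(g+\l h)(\Psi(\sigma,\tau_\l(\sigma)))\big|}\,d\sigma ;
\]
letting $\l\searrow0$ and using $\tau_\l\to0$ in $\CC^1$ together with the uniform lower bound on $|\nabla(g+\l h)|$ over $N_\e$, the right-hand side converges to $\oint_{\{g=0\}^\circ}ds/|\nabla g|=T$. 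This yields (ii) (and in fact supersedes the $\liminf$ step). A Poincar\'e first-return/transversal-section construction at $x_0$ is an alternative route, but the graph description seems cleanest and directly gives the single-winding property.
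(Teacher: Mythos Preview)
The paper does not prove this proposition; it is merely quoted from \cite[Prop.\,2.6]{to} and \cite[Prop.\,3.10]{p3}. So there is no in-paper argument to compare against, only the external references. Your proof is a correct self-contained substitute.

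For (i), your Gronwall argument is exactly the kind of estimate the paper itself later invokes when it writes ``Estimating as in the proof of \cite[Prop.\,3.10]{p3}'' in the proof of Theorem~\ref{prop:loc}; so your approach coincides with what the cited source presumably does.

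For (ii), your tubular-neighbourhood/graph representation is a clean and valid route. The only places where a reader might want one more sentence are: (a) the monotonicity of $\tau\mapsto (g+\l h)(\Psi(\sigma,\tau))$ on the whole fibre (you have $\partial_\tau\neq 0$ on $N_\e$, so each fibre carries at most one zero; existence of a zero follows since $(g+\l h)(\Psi(\sigma,0))=\l h(\gamma(\sigma))$ is $O(\l)$ while the $\tau$-derivative is uniformly bounded away from zero), and (b) in the $\liminf$ step, the claim that the curve length is bounded below: you argue via two points $z_{\l_n}(0)$ and $z_{\l_n}(T/2)$ with distinct limits, which is fine, but note that the diameter bound on the length already follows more directly from the Hausdorff convergence \eqref{h_d_c}. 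In any case, as you observe, the graph computation of $T_\l$ gives the full limit $T_\l\to T$ and renders the separate $\liminf$ argument unnecessary. An alternative the literature sometimes uses is a Poincar\'e first-return argument, which you mention; your choice is arguably more transparent here because it simultaneously delivers the single-winding property and the period formula.
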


To be able to discuss the differentiability properties of the Hamiltonian system \eqref{ham} w.r.t.\,$\l$, we need to require that the zero level set of $h$ intersects each of the components of $\{g=0\}$ in at least one point (which we choose as initial condition in the Hamiltonian system corresponding to $\{g=0\}^\circ$), cf.\,figure \ref{fig:init}.  \begin{figure}[ht]
\begin{center}
\includegraphics[width=0.5\textwidth]{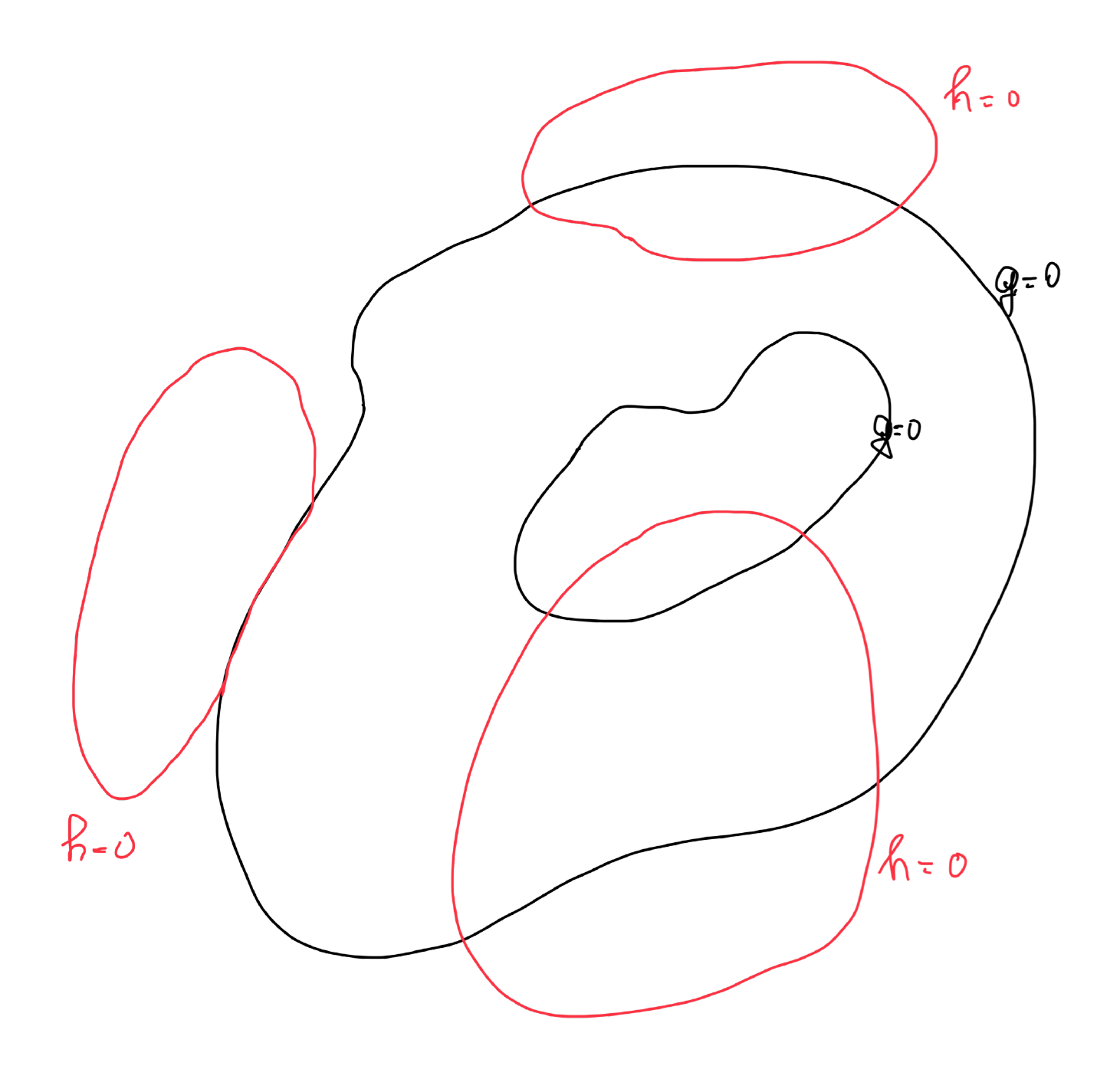}
\end{center}
\caption{The zero level set of $h$ intersects each component  of $\{g=0\}$.}
\label{fig:init}
\end{figure}   That is, 
\begin{equation}\label{hx00}\{h=0\}\cap \{g=0\}^\circ \neq \emptyset \quad \forall\,\{g=0\}^\circ\subset \{g=0\},\end{equation}and 
in \eqref{ham} we will   choose the initial condition $z(0)=x_0\in \{g=0\}$, so that 
 \begin{equation}\label{hx0}h(x_0) = 0.\end{equation}This mild restriction is due to the lack of differentiability of the operator $\l \mapsto \proj_{\{g+\l h=0\}}x_0$ \cite[p.\,9]{oc_t}. Note that the class of perturbed domains $\O_{g+\l h}$ is still large: it covers all non-empty $\CC^2$ subdomains of $D$  whose boundaries intersect the boundary of the  original domain $\O_g$, see also figure \ref{fig:ol}. 

\begin{proposition}[{\cite[Prop.\,3.2]{oc_t}}]\label{prop:diff}
In addition to $g \in \FF$, suppose that  $h \in \CC^2(\bar D)$ satisfies \eqref{hx00}. Let $z_\l $   and $  z$ be  the trajectories of the Hamiltonian systems \eqref{ham_e} and \eqref{ham} (both with initial point $x_0$, cf.\,\eqref{hx0}) corresponding  to $\{g+\l h=0\}^{\circ}$ and $\{g=0\}^{\circ}$, respectively. 
 Then,  it holds
\begin{equation}\label{wa}
w_\l:=\frac{z_\l - z}{\l} \to w \text{ in } C^1([0,T];\R^2) \quad \text{as }\l \searrow 0,\end{equation}where $T >0$ is the period of $\{g=0\}^{\circ}$ and  $w= (w_1,w_2)$ is the unique solution of 
\begin{equation}\label{w}
 \left\{
 \begin{aligned}
(w_1)'(t)& =  -\nabla\partial_2 g(z(t))\cdot w(t)
-\partial_2 h(z(t)),
\\
(w_2)'(t)& =  \nabla\partial_1 g(z(t))\cdot w(t)
+\partial_1 h(z(t)),\\
w(0)& = 0, \quad t \in [0,T].
\end{aligned}
 \quad \right.
\end{equation} 
\end{proposition}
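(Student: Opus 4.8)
The plan is to derive \eqref{wa}–\eqref{w} as a straightforward (if technical) consequence of continuous dependence and differentiability of ODE flows with respect to parameters, combined with the quantitative convergence $g+\l h\to g$ in $\CC^2(\bar D)$ and the trajectory convergence already recorded in Proposition \ref{lem:vec}(i). First I would set up notation: write the two Hamiltonian systems \eqref{ham} and \eqref{ham_e} in the compact form $z'=F(z)$ and $z_\l'=F(z)+\l\, G_\l(z)$ with $F=(-\partial_2 g,\partial_1 g)$ and $G_\l=(-\partial_2 h,\partial_1 h)$ (here $G_\l$ is actually $\l$-independent since the perturbation is linear in $\l$, which is what makes the limit clean), both with the same initial point $x_0$ chosen via \eqref{hx00}–\eqref{hx0}. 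Since $g\in\CC^2(\bar D)$, the field $F$ is $\CC^1$ with Lipschitz Jacobian, so the flow is well-defined and the difference quotient $w_\l=(z_\l-z)/\l$ makes sense on $[0,T]$ once $\l$ is small enough that $z_\l$ stays in $\bar D$ (guaranteed by Proposition \ref{lem:unique} and Lemma \ref{lem:glh}).

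The key step is to write the integral equation satisfied by $w_\l$. Subtracting the two systems, dividing by $\l$, and using $w_\l(0)=0$ (both trajectories start at $x_0$), I get
\begin{equation*}
w_\l(t)=\int_0^t \Big[ \frac{F(z_\l(s))-F(z(s))}{\l} + J\!\cdot\!\big(-\partial_2 h,\partial_1 h\big)(z_\l(s))\Big]\,ds,
\end{equation*}
where $J=\begin{pmatrix}0&-1\\1&0\end{pmatrix}$ encodes the Hamiltonian structure (so that the $h$-terms read $-\partial_2 h(z_\l)$ and $+\partial_1 h(z_\l)$). Now I use the fundamental theorem of calculus on the first bracket: $F(z_\l(s))-F(z(s))=\big(\int_0^1 DF(z(s)+r(z_\l(s)-z(s)))\,dr\big)(z_\l(s)-z(s))$, so that term equals $A_\l(s)\,w_\l(s)$ with $A_\l(s)=\int_0^1 DF(\cdot)\,dr$. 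Thus $w_\l$ solves a linear integral (hence ODE) system with coefficient matrix $A_\l$ and forcing term $b_\l(s):=J(-\partial_2 h,\partial_1 h)(z_\l(s))$. By Proposition \ref{lem:vec}(i), $z_\l\to z$ in $\CC^1([0,T];\R^2)$, hence uniformly on $[0,T]$; since $DF=D^2 g$ rotated is continuous on the compact $\bar D$, it is uniformly continuous there, so $A_\l\to DF(z(\cdot))=:A$ uniformly on $[0,T]$, and likewise $b_\l\to b(s):=J(-\partial_2 h,\partial_1 h)(z(s))$ uniformly. A Grönwall estimate applied to the linear system for $w_\l$ first gives a uniform bound $\|w_\l\|_{\CC([0,T])}\le C$ (with $C$ depending only on $\|A_\l\|_\infty$, $\|b_\l\|_\infty$, $T$, all uniformly bounded), and then, comparing $w_\l$ with the solution $w$ of the limiting linear system $w'=Aw+b$, $w(0)=0$ — which is exactly \eqref{w} — another Grönwall argument gives $\|w_\l-w\|_{\CC([0,T])}\le C\,(\|A_\l-A\|_\infty+\|b_\l-b\|_\infty)\to0$. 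Finally the $\CC^1$ convergence in \eqref{wa} is automatic: $w_\l'=A_\l w_\l+b_\l\to Aw+b=w'$ uniformly, since each factor converges uniformly and the $w_\l$ are uniformly bounded. Uniqueness of $w$ follows from linearity of \eqref{w} and Grönwall.

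The main obstacle, and the only place any care is really needed, is the \emph{uniform} control over the interval $[0,T]$: a priori the difference quotient $w_\l$ is only controlled on intervals where $z_\l$ remains in $\bar D$ and where $DF$ evaluated along the convex combination of $z$ and $z_\l$ is defined, so one must invoke Lemma \ref{lem:glh} and Proposition \ref{lem:unique} to guarantee, for $\l$ below a threshold $\l_0$, that the whole segment $z(s)+r(z_\l(s)-z(s))$ stays inside $\bar D$ for $s\in[0,T]$, $r\in[0,1]$ (this uses that $z_\l$ is uniformly close to $z$ and $z([0,T])=\{g=0\}^\circ\subset\subset D$). Once that is in place, everything reduces to the standard linear-ODE perturbation lemma, and there is nothing delicate left; the Hamiltonian/periodicity structure plays no role beyond fixing the sign pattern in \eqref{w} and ensuring (via Proposition \ref{lem:vec}(ii), $T_\l\to T$) that $[0,T]$ is a legitimate common interval of definition. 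One should also note that because the perturbation $\l h$ is affine in $\l$, no remainder term $o(\l)$ appears in the forcing, which is why the convergence is to the genuine solution of \eqref{w} rather than merely to a limit along subsequences.
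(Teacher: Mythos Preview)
Your argument is correct and is exactly the standard smooth-dependence-on-parameters argument for ODEs; there is no genuine gap. Note, however, that the paper does not supply its own proof of this proposition: it is quoted verbatim from \cite[Prop.\,3.2]{oc_t}, so there is no in-paper proof to compare against. Your proposal is precisely the kind of argument one expects to find in the cited reference.

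Two minor remarks. First, you assert that $F=(-\partial_2 g,\partial_1 g)$ is ``$\CC^1$ with Lipschitz Jacobian''; under the standing hypothesis $g\in\CC^2(\bar D)$ the Jacobian $DF$ is only continuous, not Lipschitz. This does not matter for your proof, since you only use uniform continuity of $DF$ on the compact $\bar D$ to get $A_\l\to A$ uniformly. Second, the introduction of the rotation matrix $J$ is a little garbled (you write $J\cdot(-\partial_2 h,\partial_1 h)$, which is not what you mean; presumably you intend $(-\partial_2 h,\partial_1 h)=J\nabla h$), but this is cosmetic and the intended forcing term $b_\l(s)=(-\partial_2 h,\partial_1 h)(z_\l(s))$ is unambiguous.
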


\section{The Dirichlet problem}\label{sec:3}This section is dedicated to the study of the differentiability properties of the solution of \eqref{eq0} w.r.t.\,perturbations of the two-dimensional domain.   These perturbations take place via functional variations as described in the introduction.

In the sequel, $g \in \FF$ is fixed and $\O_g=\{x\in  D:g<0\}$ is assumed to be a (possibly multiply connected) planar \textit{domain} (we recall that $\O_g \in \CC^2$, cf.\,Proposition \ref{prop:ham}, see also \cite[Thm.\,4.2]{dz}). Note that, since, in general, $\O_g$ may have more than one component, it is reasonable to 'select' one of them in the context of solving PDEs. This will be done in section \ref{sec:so}  via the condition $E \subset \O_g$ and for the moment we just assume that $\O_g$ is connected (however, it may have holes).
We keep the whole time in mind the  properties:
\[\partial \og=\{x\in  D:g=0\},\]
 \[\overline{\og}^c=\{x\in  D:g>0\},\]where $\overline \og^c$ is the complement of the set $\overline \O_g$. Note that these  may be found for instance in \cite[Thm.\,4.2]{dz}.

We are interested in  the following non-smooth problem:
\begin{equation}\label{eq}\begin{aligned}
-\laplace y_g  + \beta(y_g ) &=f   \quad \text{in }\O_g,\\ y_g &= 0 \quad \text{on }\partial \O_g,\end{aligned}\end{equation}where $f\in L^p(D),\ p>2$ and $\beta$ is  a not necessarily differentiable function. 

The next requirement on the non-linearity $\beta$ is a standiing assumption and it is thus supposed to hold throughout the manuscript (without mentioning it every time).
  \begin{assumption}[The non-smoothness]\label{assu:stand}
 \begin{enumerate}
 \item\label{it:stand2}The  function  $\beta: \R \to \R$ is   
monotone increasing and {locally} Lipschitz continuous {in the following sense: F}or all $M>0$, there exists 
  a constant $L_M>0$ such that
  \begin{equation*}
   |\beta(y_1) - \beta(y_2)| \leq L_M |y_1 - y_2| \quad \forall\, y_1, y_2 \in [-M,M] .
  \end{equation*}
  \item\label{it:stand3}The mapping $\beta$ is directionally differentiable at every point, i.e., 
  \begin{equation*}
   \Big|\frac{\beta(y + \tau \,\dy) - \beta(y)}{\tau} - \beta'(y;\dy)\Big| \stackrel{\tau \searrow 0}{\longrightarrow} 0 \quad \forall \, y,\dy \in \R.
  \end{equation*}
  \end{enumerate}
\end{assumption}

By Assumption \ref{assu:stand}.\ref{it:stand2}, it is {straightforward} to see that the Nemytskii operator $\beta:L^\infty(\AA) \to L^\infty(\AA)$ is well-defined for each measurable  set $\AA\subset \R^2$. Moreover, this is Lipschitz continuous on bounded sets, i.e., for every  $M > 0$, 
  there exists $L_M > 0$ so that
  \begin{equation}\label{eq:flip}
   \|\beta(y_1) - \beta(y_2)\|_{L^r(\AA)} \leq L_M \, \|y_1 - y_2\|_{L^r(\AA)} \quad \forall\, y_1,y_2 \in \clos{B_{L^\infty(\AA)}(0,M)},\ \forall\, 1\leq r \leq \infty.
  \end{equation}In addition,
  $\beta:L^\infty(\AA) \to L^\varrho(\AA)$, $1\leq \varrho<\infty$, is directionally differentiable, as a result of Assumption \ref{assu:stand} and the Lebesgue's dominated convergence theorem, that is,
    \begin{equation}\label{eq:dir}
   \Big\|\frac{\beta(y + \tau \,\dy) - \beta(y)}{\tau} - \beta'(y;\dy)\Big\|_{L^\varrho(\AA)} \stackrel{\tau \searrow 0}{\longrightarrow} 0 \quad \forall \, y,\dy \in L^\infty(\AA), \ \forall\, 1\leq \varrho<\infty.
  \end{equation}
\begin{definition}[The control-to-state map associated to \eqref{eq}]\label{S}
We define
\begin{equation}
\SS:g \in \FF \mapsto y_g \in  H^1_0(\O_g) \cap W^{2,p}(\O_g),\end{equation}
where $y_g$ solves the equation \eqref{eq} on the domain $\O_g$.\end{definition}

\begin{remark}\label{rem:S}
According to Definition \ref{S},  $\SS(g)$ exists only as an element of $H_0^1(\O_g).$ Whenever we write $\SS( g)$ as an element of $H_0^1(D)$ in what follows,  we think of its extension by zero outside $\O_g$.
\end{remark}

The principal aim of this section is to look at the differentiability properties of $\SS$  w.r.t.\,$g\in \FF$ (in direction $h\in \CC^2(\bar D)$), that is, we are interested in calculating
\[\lim_{\l \searrow 0}\frac{\SS(g+\l h)-\SS(g)}{\l}\]in a suitable space.

 For simplicity, we will use in the sequel the abbreviations $y_g:=\SS(g)$, $y_{g+\l h}:=\SS({g+\l h})$, where $\l>0$ is small enough so that the unicity of the approximating curve in Proposition \ref{lem:unique} is guaranteed.

\subsection{Continuity properties of $\SS$}\label{sec:cont_S}
In this subsection we  look at the convergence behaviour of $y_{g+\l h}$ as $\l \searrow 0$.
We start with some essential observations.

\begin{lemma}[{\cite[Thm.\,8.33]{gt}}]
Let $g \in \FF$ and $h\in \CC^2(\bar D)$. Then, there exists $\l_0>0$ so that  
\begin{equation}\label{w2p_bound}\|y_{g+\l h}\|_{\CC^{1,\alpha}(\bar \O_{g+\l h})}\leq c,\quad \forall\,\l\in (0,\l_0],
\end{equation}where $c>0$ is independent of $\l$ and $\alpha>0$ is such that $W^{2,p}(\O_{g+\l h})\embed \CC^{1,\alpha}(\bar \O_{g+\l h})$.
\end{lemma}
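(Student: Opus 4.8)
The statement asserts a uniform $\mathcal{C}^{1,\alpha}$ bound for the states $y_{g+\lambda h}$ on the family of domains $\Omega_{g+\lambda h}$. The plan is to combine the standard $L^\infty$-a priori bound for the state with the classical global $\mathcal{C}^{1,\alpha}$ estimate of Gilbarg--Trudinger (Theorem 8.33), while keeping careful track of the fact that both the domain \emph{and} its $\mathcal{C}^2$-character vary with $\lambda$. First I would establish a uniform $L^\infty$ bound on $y_{g+\lambda h}$. Since $\beta$ is monotone increasing, testing the equation and using that $\beta(0)$ contributes in a controlled way (or subtracting $\beta(0)$ and absorbing it into $f$), one gets $\|y_{g+\lambda h}\|_{H^1_0(\Omega_{g+\lambda h})}\le c\|f\|_{L^2(D)}$ with $c$ depending only on a uniform Poincar\'e constant for the family $\{\Omega_{g+\lambda h}\}$; such a uniform Poincar\'e constant exists because, by Proposition~\ref{lem:unique}, the domains $\Omega_{g+\lambda h}$ converge in the Hausdorff sense and are uniformly contained in $D$. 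Then a Stampacchia truncation argument (again exploiting monotonicity of $\beta$ so that the nonlinear term has the right sign on the truncation level sets) upgrades this to $\|y_{g+\lambda h}\|_{L^\infty(\Omega_{g+\lambda h})}\le M$ for a constant $M$ independent of $\lambda$, using $f\in L^p(D)$ with $p>2$.

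Next I would freeze this $M$ and invoke the local Lipschitz property of $\beta$ from Assumption~\ref{assu:stand}.\ref{it:stand2}: on $[-M,M]$ we have $|\beta(y_{g+\lambda h})|\le |\beta(0)|+L_M M =: C_M$, so the right-hand side $f-\beta(y_{g+\lambda h})$ of the linear Dirichlet problem $-\Delta y_{g+\lambda h}=f-\beta(y_{g+\lambda h})$ is bounded in $L^p(\Omega_{g+\lambda h})$ by a constant independent of $\lambda$. Now Theorem~8.33 of \cite{gt} gives, for a $\mathcal{C}^{1,\alpha'}$ domain (any $\mathcal{C}^2$ domain qualifies, and $W^{2,p}\hookrightarrow \mathcal{C}^{1,\alpha}$ with $\alpha = 1-2/p$ in two dimensions), the global estimate
\begin{equation*}
\|y_{g+\lambda h}\|_{\mathcal{C}^{1,\alpha}(\bar\Omega_{g+\lambda h})} \le c\big(\|y_{g+\lambda h}\|_{L^\infty(\Omega_{g+\lambda h})} + \|f-\beta(y_{g+\lambda h})\|_{L^p(\Omega_{g+\lambda h})}\big),
\end{equation*}
where the constant $c$ depends on $p$, on the diameter of $D$, and on the $\mathcal{C}^{1,\alpha'}$-regularity of $\partial\Omega_{g+\lambda h}$ (a bound on the local straightening charts and their H\"older norms).

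The crux of the argument — and the step I expect to be the main obstacle — is showing that this geometric constant $c$ can be chosen \emph{uniformly} in $\lambda$. This is where the preliminary material on Hamiltonian parametrizations does the real work: by Proposition~\ref{prop:ham} and Lemma~\ref{lem:glh}, $\partial\Omega_{g+\lambda h}=\{g+\lambda h=0\}$ is parametrized by the trajectory $z_\lambda$ of \eqref{ham_e}, and by Proposition~\ref{lem:vec}(i)--(ii) we have $z_\lambda\to z$ in $\mathcal{C}^1$ with $T_\lambda\to T$. Since $g+\lambda h\to g$ in $\mathcal{C}^2(\bar D)$ and the non-degeneracy $|\nabla(g+\lambda h)|+|g+\lambda h|\ge\delta/2$ holds uniformly for small $\lambda$ (from \eqref{gd}), the curvature and the $\mathcal{C}^{1,1}$-norm of the boundary curves are bounded uniformly in $\lambda$; one can produce boundary-straightening charts whose H\"older norms (and those of their inverses) are uniformly controlled. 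Feeding a common such constant $c$ and the uniform bounds on $\|y_{g+\lambda h}\|_{L^\infty}$ and $\|f-\beta(y_{g+\lambda h})\|_{L^p}$ into the displayed estimate yields \eqref{w2p_bound}, after possibly shrinking $\lambda_0$ so that all the above holds simultaneously. I would present the uniform-chart argument as a short lemma or remark, since the rest is a routine assembly of known estimates.
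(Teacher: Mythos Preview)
Your proposal is correct and follows essentially the same three-step skeleton as the paper: (i) a uniform $L^\infty$ bound on $y_{g+\lambda h}$, (ii) the global $\mathcal{C}^{1,\alpha}$ estimate from \cite[Thm.\,8.33]{gt} applied to $-\Delta y_{g+\lambda h}=f-\beta(y_{g+\lambda h})$, and (iii) a check that the geometric constant in that estimate is uniform in $\lambda$. The only noteworthy difference is in step (iii): the paper does not go through the Hamiltonian parametrizations $z_\lambda$ at all but instead writes down explicit boundary-straightening charts directly in terms of $g_\lambda:=g+\lambda h$, namely
\[
\Psi_x(y)=\frac{1}{|\nabla g_\lambda(x)|}\Big((y-x)^\top(-\partial_2 g_\lambda(x),\partial_1 g_\lambda(x)),\, g_\lambda(y)\Big),
\]
so that the uniformity of the chart constants is an immediate consequence of the uniform $\mathcal{C}^2(\bar D)$ bound on $g_\lambda$ (and the uniform lower bound on $|\nabla g_\lambda|$ on the zero level set). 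This is shorter and more transparent than building charts from the trajectories $z_\lambda$; your route via Proposition~\ref{lem:vec} works as well but is a detour, since the $\mathcal{C}^1$ convergence of $z_\lambda$ ultimately encodes the same information that $g+\lambda h\to g$ in $\mathcal{C}^2$ already gives directly.
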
\begin{proof}First, we observe that 
\begin{equation}\label{cl}\|y_{g+\l h}\|_{\CC(\bar \O_{g+\l h})}\leq c \quad \forall\l \in[0,\l_0),\end{equation}where  $\l_0>0$ is small (as in Lemma \ref{lem:glh}) and where $c>0$ is independent of $\l$, see e.g.\,the proof of {\cite[Thm.\,4.5]{troe}}. Then, since \eqref{eq} associated to $g+\l h$ can be rewritten as
\[-\laplace y_{g+\l h}=-\beta(y_{g+\l h})+f \ \ \text{ in }\O_{g+\l h},\ y_{g+\l h}|_{\partial \O_{g+\l h}}=0, \]we obtain by \cite[Thm.\,8.33]{gt} (see also the remark at the end of Sec.\,8.11 in \cite{gt}) the estimate
\[\|y_{g+\l h}\|_{\CC^{1,\alpha}(\bar \O_{g+\l h})}\leq C(\|y_{g+\l h}\|_{\CC(\bar \O_{g+\l h})}+\|\beta(y_{g+\l h})-f\|_{L^p(\O_{g+\l h})}),\]where $\alpha>0$ is such that $W^{2,p}(\O_{g+\l h})\embed \CC^{1,\alpha}(\bar \O_{g+\l h})$ and $C>0$ is independent of $\l$. 

Note that the latter assertion is due to the fact that $C>0$ depends only on the $\CC^{1,\alpha}$ norms of the $\CC^2$ mappings which define the local representation  of $\partial \O_{g+\l h}$ (see the comments following \cite[Thm.\,8.33]{gt} and the proofs of \cite[Thm.\,6.1, 6.6]{gt}). These local representations can be chosen as in \cite[proof of Thm.\,4.2]{dz}:
\[\Psi_x:\UU(x)\to \R^2,\quad \Psi_x(y):=\frac{1}{|\nabla g_\l(x)|}\Big((y-x)^T(-\partial_2 g_\l(x),\partial_1 g_\l(x)), {g_\l(y)}\Big),\]
where $\UU(x)$ is a neighbourhood of $x\in \partial \O_{g+\l h}$ and we abbreviate $g_\l:=g+\l h$.
Hence, the fact that $C>0$ is independent of $\l$ reduces to the uniform boundedness of $\|g_\l\|_{\CC^{1,\alpha}},$ which is true for $\l$ small (even when we take the $\CC^2$ norm).

 Thanks to \eqref{cl}, \eqref{eq:flip} and $\O_{g+\l h} \subset D$ ($\l$ small), we finally deduce \eqref{w2p_bound} from the above estimate.\end{proof}

\begin{lemma}\label{lem:l}Let $g \in \FF$ and $h\in \CC^2(\bar D)$.
For each $v \in \CC_c(\O_g)$ there exists $\l(v,g)>0$ so that $v \in \CC_c(\O_{g+\l h})$ for all $\l\in [0, \l(v,g))$.
\end{lemma}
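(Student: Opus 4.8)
The plan is to exploit the compactness of $K:=\operatorname{supp}v$ together with the uniform convergence $g+\l h\to g$ on $\bar D$. First I would note that $K\subset\subset\O_g$, so $g<0$ on the compact set $K$; by the Weierstrass theorem there is $\delta>0$ with $g\le-\delta$ on $K$. Since $h\in\CC^2(\bar D)$ is bounded, $\|(g+\l h)-g\|_{\CC(\bar D)}=\l\,\|h\|_{\CC(\bar D)}$, hence for $\l<\delta/\|h\|_{\CC(\bar D)}$ (read as $\l<\infty$ when $h\equiv0$) we get $g+\l h<0$ on $K$, i.e.\ $K\subset\O_{g+\l h}$.

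Next I would set $\l(v,g):=\min\{\l_0,\ \delta/\|h\|_{\CC(\bar D)}\}$, where $\l_0>0$ is the threshold from Lemma \ref{lem:glh} ensuring $g+\l h\in\FF$, so that $\O_{g+\l h}$ is a well-defined (possibly multiply connected) $\CC^2$ open set. Then for every $\l\in[0,\l(v,g))$ the open set $\O_{g+\l h}$ contains $K$, the case $\l=0$ being the hypothesis itself.

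Finally I would establish the membership $v\in\CC_c(\O_{g+\l h})$: extending $v$ by zero outside $\O_g$ yields a function that is continuous on all of $\R^2$, precisely because $K=\operatorname{supp}v$ is a compact subset of the open set $\O_g$; its restriction to $\O_{g+\l h}$ is then continuous with compact support $K\subset\O_{g+\l h}$, so it lies in $\CC_c(\O_{g+\l h})$.

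There is essentially no genuine obstacle here; the only points deserving a word of care are the well-definedness of $\O_{g+\l h}$ as a $\CC^2$ domain (handled by Lemma \ref{lem:glh}) and the continuity of the zero-extension of $v$, which is immediate from $\operatorname{supp}v\subset\subset\O_g$.
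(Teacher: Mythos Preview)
Your proposal is correct and follows essentially the same approach as the paper: both arguments take $K=\operatorname{supp}v$, use compactness of $K\subset\O_g$ to obtain $g\le-\delta$ on $K$, and then the uniform smallness of $\l h$ to conclude $K\subset\O_{g+\l h}$ for small $\l$. Your version merely adds the explicit threshold $\l(v,g)=\min\{\l_0,\delta/\|h\|_{\CC(\bar D)}\}$ and spells out the continuity of the zero-extension, neither of which changes the underlying idea.
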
\begin{proof}We show that each compact $K$ subset of $\O_g$ is a compact subset of $\O_{g+\l h}$. Indeed, 
since $g$ is continuous, we have
\[g(x)\leq - \delta \quad \forall\, x \in K \subset \subset \Omega_g\]
for some $\delta>0.$
Thus, there exists $\l>0$, small, independent of $x$, so that
\[g(x)+\l h (x) \leq  -\delta/2<0 \quad  \forall\, x \in K.\]Thus, $K \subset \Omega_{g+\l h}$.
 For a fixed $v \in \CC_c( \O_{g})$ there exists a compact subset $K$ of $\O_g$ so that 
$v \in \CC_c(  K)$ and hence, $v \in \CC_c (  \O_{g+\l h})$ for all $\l\in [0, \l(v,g))$.
\end{proof}

\begin{remark}
An analogous argument to the one from the proof of  Lemma \ref{lem:l} shows that each compact subset of $\bar \O_g^c$ is a compact subset of $\bar \O_{g+\l h}^c$ for $\l$ small enough and thus, $\O_{g+\l h}$ converges to $\O_g$ in the sense of compact sets \cite[Def.\,2.2.21]{hp}. In fact,
\[d_\HH(\bar \O_{g+\l h},\bar \O_g)\to 0,\]
\[d_\HH(\bar D \setminus \O_{g+\l h},\bar D \setminus \O_g)\to 0,\]
\begin{equation}\label{mu}\mu(\O_{g+\l h}) \to \mu(\O_g),\end{equation}
see for instance \cite[p.\,63]{hp} and \cite[Appendix 3]{nst_book}.
\end{remark}


One of the key characteristics of the FVA is that $\partial \O_g$ is divided by $\O_{g+\l h}$ into two disjoint (not necessarily connected) parts  that are independent of $\l$ (recall that, in view of \eqref{hx0}, we have $\partial \O_g \cap \partial \O_{g+\l h} \neq \emptyset$). Here we include the case when one of these disjoint parts is void (and the other is the whole boundary), that is, $\O_g \subset \O_{g+\l h}$. This observation is due to the following essential (easy to check) fact
 \[\partial \O_g \cap \partial \O_{g+\l h }=\{g=h=0\}.\]
Hence, we  have a representation of $\partial \O_g$ as  a disjoint union of two sets that are both independent of $\l$, namely
\begin{equation}\label{div}
\partial \O_g=\G_1 \cup \G_2
\end{equation}where the relatively open set $\G_1$ is defined as
\begin{equation}\label{g1}\G_1:=\partial \O_g \setminus \bar  \O_{g+\l h}\end{equation}and the relatively closed set $\G_2$ is given by 
\begin{equation}\label{g2}\G_2:=\partial \O_g \cap \bar \O_{g+\l h} .\end{equation} 
This fundamental remark is true regardless of the number of components of $\partial \O_g$ (i.e., even when $\O_g$ has  holes), as a consequence of Proposition \ref{lem:unique}. {To ensure that the zero level set of $h$ does not intersect $\{g=0\}$ an infinite number of times}, we require in the remainder of the paper that $h \in \FF$, which by Proposition \ref{prop:ham}, yields that $\{h=0\}$ is a finite union of disjoint closed $\CC^2$ curves. Then, locally, $\{h=0\}^\circ$ can intersect  $\{g=0\}$ in a single point or these two zero level sets may have common regions consisting of curves, as depicted   on the left side in figure \ref{fig:g2}.
 \begin{figure}[ht]
\begin{center}
\includegraphics[width=0.5\textwidth]{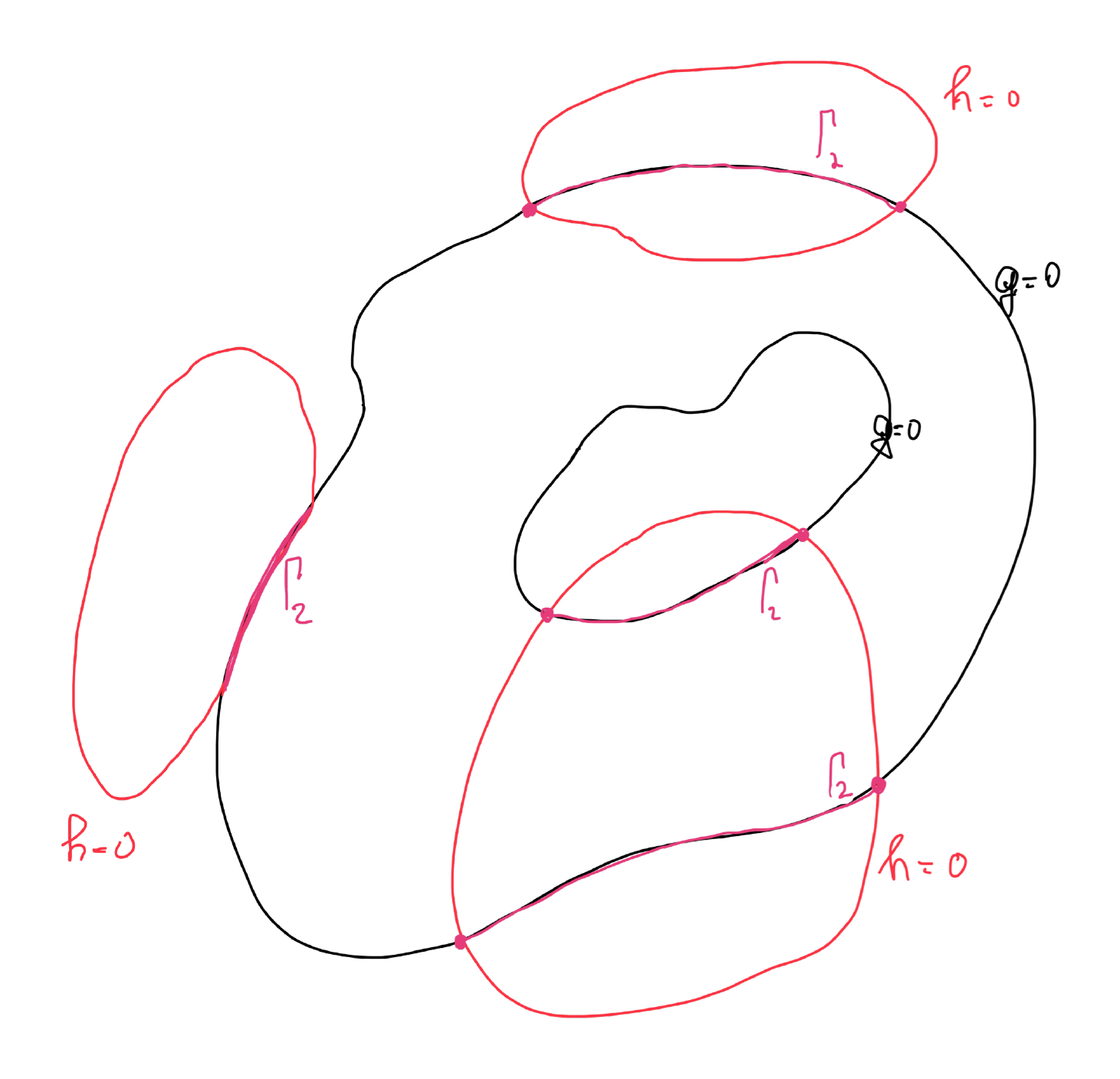}
\end{center}
\caption{The set $\G_2$ (the part of the boundary of $\partial \O_g$ included in $\bar \O_{g+\l h}$)}
\label{fig:g2}
\end{figure}  
 \begin{proposition}\label{lem:conv_yl}Suppose that $g, h \in \FF$ and \eqref{hx00} is true. Then it holds  
\begin{equation}\label{conv_yl0}y_{g+\l h} \weakly y_g \text{ in }W_0^{1,\varrho}(D)\quad \text{as }\l \searrow 0,\quad \forall\varrho \in [1,\infty),
\end{equation}
\begin{equation}\label{conv_g2}
    \nabla y_{g+\l h} \to  \nabla y_g \text{ in }\CC^{0,\alpha}(\G_2)\quad \text{as }\l \searrow 0\end{equation}for some $\alpha>0$.
\end{proposition}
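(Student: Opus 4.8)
My plan is to establish the weak $W^{1,\varrho}$-convergence first via a uniform bound plus identification of the limit, and then to upgrade to $\CC^{0,\alpha}$-convergence of the gradient on $\G_2$ by localizing near $\G_2$ and invoking interior-type Schauder estimates on a common sub-domain. For the first part, I would start from the $\CC^{1,\alpha}$-bound \eqref{w2p_bound}, which gives $\|y_{g+\l h}\|_{\CC^{1,\alpha}(\bar\O_{g+\l h})}\le c$ uniformly in $\l$. Extending $y_{g+\l h}$ by zero to $D$ (cf.\ Remark \ref{rem:S}), the zero extension lies in $W_0^{1,\varrho}(D)$ for every $\varrho<\infty$ because $y_{g+\l h}$ vanishes on $\partial\O_{g+\l h}$, and the $W^{1,\infty}(\O_{g+\l h})$-bound together with $\mu(\O_{g+\l h})\to\mu(\O_g)$ (see \eqref{mu}) gives a uniform bound for the extension in $W_0^{1,\varrho}(D)$. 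Hence along a subsequence $y_{g+\l h}\weakly \tilde y$ in $W_0^{1,\varrho}(D)$ for each fixed $\varrho$, and by a diagonal argument one may take a single subsequence working for all $\varrho$.

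The crux of the first part is to identify $\tilde y=y_g$. Here I would use a test-function argument: for $v\in\CC_c^\infty(\O_g)$, Lemma \ref{lem:l} guarantees $v\in\CC_c(\O_{g+\l h})$ for $\l$ small, so $v$ is an admissible test function for the weak form of \eqref{eq} on $\O_{g+\l h}$; passing to the limit using the $\CC$-bound \eqref{cl}, the local Lipschitz estimate \eqref{eq:flip} (note $y_{g+\l h}\to\tilde y$ strongly in $L^2_{\mathrm{loc}}$ by Rellich on compact subsets of $\O_g$) and weak convergence of gradients, one obtains $\int_{\O_g}\nabla\tilde y\cdot\nabla v + \beta(\tilde y)v = \int_{\O_g} f v$ for all such $v$, i.e.\ $\tilde y$ solves the PDE in $\O_g$. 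To get the boundary condition $\tilde y\in H_0^1(\O_g)$, one uses that the zero extensions converge weakly in $W_0^{1,\varrho}(D)$ and that $\O_{g+\l h}\to\O_g$ in the Hausdorff complementary sense, so the limit still vanishes (in the trace sense) on $\partial\O_g$; alternatively, invoke $\gamma$-convergence / Mosco-type stability of $H_0^1$ under such domain convergence for $\CC^2$ domains. By uniqueness of the solution of \eqref{eq} (monotonicity of $\beta$), $\tilde y=y_g$, and since the limit is independent of the subsequence the whole family converges, which proves \eqref{conv_yl0}.

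For \eqref{conv_g2} I would argue locally. Cover a neighbourhood of $\G_2=\{g=h=0\}$; near each point $x_*\in\G_2$ the boundaries $\partial\O_g$ and $\partial\O_{g+\l h}$ coincide on $\G_2$ and are $\CC^2$ curves depending continuously on $\l$ (Lemma \ref{lem:unique}, Proposition \ref{lem:vec}), with local representations $\Psi_x$ as in the proof of the $\CC^{1,\alpha}$-bound whose $\CC^2$-norms are uniformly bounded in $\l$. On a fixed one-sided neighbourhood $V\subset\O_g\cap\O_{g+\l h}$ touching $\G_2$, both $y_g$ and $y_{g+\l h}$ satisfy the same type of equation $-\laplace u=-\beta(u)+f$ with the same zero boundary data on the common boundary piece $\G_2$; subtracting, $d_\l:=y_{g+\l h}-y_g$ satisfies $-\laplace d_\l=-(\beta(y_{g+\l h})-\beta(y_g))$ in $V$ with $d_\l=0$ on $\G_2\cap\partial V$, and by the already-established $W^{1,\varrho}(D)$-convergence (hence $L^p$-convergence of the right-hand side via \eqref{eq:flip}) together with boundary $\CC^{1,\alpha}$ estimates (\cite[Thm.\,8.33]{gt}) applied up to the $\CC^2$ boundary portion $\G_2$, one gets $\|d_\l\|_{\CC^{1,\alpha}}\to 0$ on a slightly smaller neighbourhood of $\G_2$; restricting to $\G_2$ yields $\nabla y_{g+\l h}\to\nabla y_g$ in $\CC^{0,\alpha}(\G_2)$. \textbf{The main obstacle} I anticipate is making the boundary Schauder estimate genuinely uniform in $\l$ near $\G_2$: the two domains agree \emph{on} $\G_2$ but differ immediately off it, so one must carefully choose the common sub-domain $V$ (independent of $\l$, using that $\partial\O_{g+\l h}$ stays within an $\e$-tube of $\partial\O_g$ by Proposition \ref{lem:unique}) and verify that the constants in \cite[Thm.\,8.33]{gt} depend only on the $\CC^{1,\alpha}$-norms of the boundary charts, which are uniformly controlled as in the preceding lemma — together with handling the fact that the difference $d_\l$ only has zero data on the piece $\G_2$ of $\partial V$ and not on all of $\partial V$, which forces an interior/partial-boundary Schauder argument rather than a global one.
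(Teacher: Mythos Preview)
Your plan for \eqref{conv_yl0} is sound and close to the paper's. The one difference is how you verify $\tilde y\in H_0^1(\O_g)$: you invoke Hausdorff-complementary / Mosco-type stability of $H_0^1$, whereas the paper splits $\partial\O_g=\G_1\cup\G_2$ and checks the trace on each piece by hand. On $\G_1\subset D\setminus\bar\O_{g+\l h}$ the zero extension gives $\gamma\tilde y=0$ trivially; on $\G_2$ the paper uses the Hamiltonian parametrizations $z,z_\l$ and the uniform H\"older bound \eqref{w2p_bound} to get $|y_{g+\l h}(z(t))|=|y_{g+\l h}(z(t))-y_{g+\l h}(z_\l(t))|\le c|z(t)-z_\l(t)|^{\hat\alpha}\to 0$, so $\gamma y_{g+\l h}\to 0$ in $\CC(\G_2)$ and hence $\gamma\tilde y=0$ there. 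Both routes work; the paper's is more self-contained in that it avoids importing a Mosco-stability result.

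Your argument for \eqref{conv_g2}, however, has a genuine error. You assert that on $\G_2$ ``both $y_g$ and $y_{g+\l h}$ satisfy \dots the same zero boundary data'', so that $d_\l=y_{g+\l h}-y_g$ vanishes on $\G_2\cap\partial V$. This is false: by definition $\G_2=\partial\O_g\cap\bar\O_{g+\l h}$, so $\G_2$ lies on $\partial\O_g$ (whence $y_g|_{\G_2}=0$) but typically in the \emph{interior} of $\O_{g+\l h}$, where $y_{g+\l h}$ has no reason to vanish. Thus $d_\l|_{\G_2}=y_{g+\l h}|_{\G_2}\neq 0$ in general, and your subtraction-plus-Schauder scheme collapses at the first step. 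A boundary Schauder estimate with the correct nonzero Dirichlet datum would require control of $y_{g+\l h}|_{\G_2}$ in $\CC^{1,\alpha}(\G_2)$, which is exactly what you are trying to prove. The obstacle you flagged (uniform constants, partial-boundary data) is real but secondary; the primary obstruction is that the boundary data is not zero to begin with.

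The paper avoids this entirely by a compactness argument rather than a difference-equation argument: one fixes a compact set $K$ with $\G_2\subset K\subset\O_{g+\l h}$ for all small $\l$ (possible by Proposition \ref{lem:unique}, since $\G_2\subset\bar\O_{g+\l h}$) such that $\interior K$ is a $\CC^{1,\alpha}$ domain with $\overline{\interior K}=K$. The uniform bound \eqref{w2p_bound} then gives $\|y_{g+\l h}\|_{\CC^{1,\alpha}(K)}\le c$, and the compact embedding $\CC^{1,\alpha}(K)\hookrightarrow\CC^{1,\tilde\alpha}(K)$ for $0<\tilde\alpha<\alpha$ (cf.\ \cite[Lem.\,6.36]{gt}) yields a $\CC^{1,\tilde\alpha}(K)$-convergent subsequence. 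Its limit is identified as $y_g$ via part (i), and restricting to $\G_2\subset K$ gives \eqref{conv_g2}. No subtraction and no Schauder estimate for $d_\l$ are needed.
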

\begin{proof}(i) \textit{Proof of \eqref{conv_yl0}}. 
From \eqref{w2p_bound} 
we have on a subsequence  
\begin{equation}\label{w1}y_{g+\l h} \weakly \widetilde y \text{ in }W_0^{1,\varrho}(D).\end{equation}
In view of Lemma \ref{lem:l}, we can test  the equation associated to $y_{g+\l h}$  with some arbitrary   $v \in \CC_c^\infty(\O_g)\subset \CC_c^\infty(\O_{g+\l h})$, where $\l=\l(v)$ is small.  We have 
\begin{equation}\label{e00}
\int_{\overline{\supp} v} \nabla y_{g+\l h} \nabla v\,dx+\int_{\overline{\supp} v} \beta(y_{g+\l h})v\,dx =\int_{\overline{\supp} v}f v \,dx, \quad \forall\, \l\in [0,\l(v)).  \end{equation} Passing to the limit $\l \searrow 0 $ in \eqref{e00}, where we rely on \eqref{w1}, the compact embedding $W_0^{1,\varrho}(D) \embed \embed L^{\infty}(D)$ and the continuity properties of $\beta$, see Assumption \ref{assu:stand}.\ref{it:stand2},
yields
\begin{equation}\label{e01}
\int_{\overline{\supp} v} \nabla \widetilde y \nabla v\,dx+\int_{\overline{\supp} v} \beta(\widetilde y)v\,dx =\int_{\overline{\supp} v}f v \,dx,  \end{equation}i.e.,
\begin{equation}\label{e0}
\int_{\O} \nabla \widetilde y \nabla v\,dx+\int_{\O} \beta(\widetilde y)v\,dx =\int_{\O}f v \,dx \quad \forall\, v \in   \CC_c^\infty(\O).\end{equation}
It remains to show $\widetilde y=0$ on $\partial \O$. 
Due to \eqref{w1} we have \[\g y_{g+\l h} \weakly \g \widetilde y \text{ in }W^{1-1/\varrho,\varrho}(\partial \O),\]where $\g:W^{1,\varrho}(D) \to W^{1-1/\varrho,\varrho}(\partial \O)$ is the trace operator. Thus,
$\widetilde y=0$ on $\G_1$, cf.\,\eqref{g1},
and \begin{equation}\label{w1'}\g y_{g+\l h} \weakly \g \widetilde y \text{ in }W^{1-1/\varrho,\varrho}(\G_2),\end{equation}where we recall the definition of $\G_2$ from \eqref{g2}.
  Fix $t \in [0,T]$ so that $z(t) \in \O_{g+\l h}$, where $z$ is the solution of the Hamiltonian system \eqref{ham}. Since $h\in \FF$, by assumption, we have that $\{h=0\}$ is a finite union of disjoint closed $\CC^2$ curves (see Proposition \ref{prop:ham}). Thus, $t$ belongs to a finite union of subintervals of $[0,T]$, say $I$. It holds 
\[|y_{g+\l h}(z(t))-y_{g+\l h}(z_\l(t))|\leq c |z(t)-z_\l(t)|^{\hat \alpha}, \quad t \in I,\]with some $\hat \alpha >0$ and $c>0$ independent of $\l$, where we used \eqref{w2p_bound};
 recall that we take  the initial point in both Hamiltonian systems \eqref{ham} and \eqref{ham_e} to be  $x_0$, cf.\,\eqref{hx0}. Now, in light of Proposition \ref{lem:vec}
we have
\[\sup_{t \in I}|y_{g+\l h}(z(t))|\to 0\] which yields
\[\gamma y_{g+\l h} \to 0 \quad\text{in } \CC(\G_2).\]Due to \eqref{w1'} we have $\widetilde y=0$ on $\G_2$ as well. In conclusion, the weak limit in \eqref{w1} belongs to $W_0^{1,\varrho}(\O)$ (recall here \eqref{div}) and since it satisfies \eqref{e0}, we obtain $\widetilde y=y_g$. Taking another subsequence as in \eqref{w1} yields the same limit and we arrive at the desired convergence result for the whole sequence $\{y_{g+\l h}\}$.
\\(ii) \textit{Proof of \eqref{conv_g2}}. 
Let $K$ be a compact subset of $\bar \O_g$ so that $\G_2 \subset K \subset \O_{g+\l h}$ (for all $\l\in (0,\l_0),$ $\l_0>0$ small) 
and $\interior K$ is a $\CC^{1,\alpha}$ domain ($\alpha>0$ as in \eqref{w2p_bound})  with $\clos{ \interior K}=K$. Note that such a  construction is possible since  $g,h\in \FF$ and in light of \eqref{hx00}. 
 From \eqref{conv_yl0} we know that \[y_{g+\l h} \weakly y_g \text{ in }W_0^{1,\varrho}(D), \quad \forall\,\varrho \in [1,\infty).\]Thanks to \eqref{w2p_bound} combined with \cite[Lem.\,6.36]{gt}, 
  it also holds on a subsequence  
    \[y_{g+\l h} \to \widetilde y \text{ in } \CC^{1,\widetilde \alpha}(K),\]where $\alpha >\widetilde \alpha >0.$ From here we conclude 
  $\widetilde y=y_g$ in $K$
 and, since $\G_2 \subset K$, the convergence \eqref{conv_g2} follows. The proof is now complete.\end{proof}

\subsection{Directional differentiability of $\SS$}\label{sec:dd}
Now we turn our attention to  our main goal in this section, which is to investigate the differentiability of $\SS$ w.r.t.\,$g\in \FF$ in a direction $h\in \FF$ satisfying \eqref{hx00}.

\subsection*{The vector field W and the candidate for the directional derivative}
 We start by introducing a vector field that will play an essential role in this paper. This associates to a point $x$ on $\partial \O_g$ its   velocity vector 
when performing
the variation of the domain. As we will see in section \ref{sec:sm}, this corresponds to the initial velocity vector field in the context of the speed method.

\begin{definition}\label{def:w}On the zero level set of $g$, we define the  vector field $W:\{g=0\}\to \R^2$ as follows 
 \begin{equation}W(x):=w((z^{-1}(x))\quad \forall\, x \in \{g=0\}^\circ,\quad {\{g=0\}^\circ}\subset \{g=0\}\end{equation}where $w\in \CC^1([0,T]; \R^2)$ is the unique solution of \eqref{w}.\end{definition}
 
We note the crucial  identity 
\begin{equation}\label{eq:wa}
W(z(t))=w(t), \quad t \in [0,T_{\{g=0\}^\circ}],\quad \forall\,{\{g=0\}^\circ}\subset \{g=0\}.\end{equation}
where $T_{\{g=0\}^\circ}>0$ is the period of ${\{g=0\}^\circ}$. This will be used at length throughout the proofs of Lemmas \ref{lem:h0} and \ref{lem:h1} below, when we look at the behaviour of the difference quotients $\frac{ y_{g+\l h}-y}{\l}$ on the boundary of the domain of interest.

In the following lemma we gather some important properties of  the vector field $W$.
\begin{lemma}\label{lem:W}Suppose that $g, h \in \FF$ and \eqref{hx00} is true. 
Then, the mapping $W$ satisfies 
\begin{equation}\label{eq:W} \nabla g(x)W(x)+h(x) =0 \quad \forall\, x \in \{g=0\}.\end{equation}
{Moreover, it is Lipschitzian in the following sense: there exists $L_W>0$ and $\delta_W>0$ so that for each $\{g=0\}^\circ \subset \{g=0\}$ and for all $x_1,x_2 \in \{g=0\}^\circ$ with $|x_1-x_2|\leq \delta_W$ we have 
\begin{align*}|W(x_1)-W(x_2)|\leq L_W|x_1-x_2|.
\end{align*}}
\end{lemma}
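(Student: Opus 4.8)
\textbf{Proof strategy for Lemma \ref{lem:W}.}

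The plan is to establish the two assertions separately. For the algebraic identity \eqref{eq:W}, I would start from the defining relation \eqref{eq:wa}, namely $W(z(t))=w(t)$, and differentiate the composition $t\mapsto g(z(t))$. Since $z(t)\in\{g=0\}^\circ$ for all $t$, this composition is identically zero, so $\nabla g(z(t))\cdot z'(t)=0$; but this only recovers the fact that $\nabla g$ is orthogonal to the Hamiltonian flow, not yet \eqref{eq:W}. Instead, the right object to differentiate is $t\mapsto (g+\l h)(z_\l(t))$, which vanishes identically because $z_\l$ parametrizes $\{g+\l h=0\}^\circ$. Differentiating in $t$ and using the Hamiltonian system \eqref{ham_e} gives a trivial identity again, so the useful move is to differentiate $(g+\l h)(z_\l(t))\equiv 0$ with respect to $\l$ at $\l=0$: by Proposition \ref{prop:diff} we have $\partial_\l z_\l|_{\l=0}=w$ in $C^1([0,T];\R^2)$, hence
\begin{equation*}
\nabla g(z(t))\cdot w(t)+h(z(t))=0,\qquad t\in[0,T],
\end{equation*}
which upon substituting $x=z(t)$ and \eqref{eq:wa} is exactly \eqref{eq:W} on each component, and therefore on all of $\{g=0\}$. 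The only care needed is to justify differentiating the identity in $\l$ under the available $C^1$-convergence of the difference quotients $w_\l=(z_\l-z)/\l$; this is routine since $g,h\in C^2(\bar D)$ and $z_\l\to z$ in $C^1$, so one writes $0=(g+\l h)(z_\l(t))-g(z(t))$, divides by $\l$, Taylor-expands $g$ around $z(t)$, and passes to the limit using $\|z_\l-z\|_{C^0}=O(\l)$ and the uniform bounds on $\nabla g$, $\nabla^2 g$ on the compact trajectory.

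For the local Lipschitz property of $W$, I would argue componentwise over the finitely many closed curves $\{g=0\}^\circ\subset\{g=0\}$ (finiteness by Proposition \ref{prop:ham}), and then take the worst constant and smallest radius. Fix one component with period $T$ and parametrization $z:[0,T]\to\{g=0\}^\circ$, which is a $C^2$ bijection with $z'(t)=(-\partial_2 g(z(t)),\partial_1 g(z(t)))$ never vanishing on $\{g=0\}^\circ$ (since $|\nabla g|\ge\delta>0$ there, by \eqref{gd} restricted to the level set). Hence $z^{-1}$ is Lipschitz on $\{g=0\}^\circ$: there is $c>0$ with $|z^{-1}(x_1)-z^{-1}(x_2)|\le c\,|x_1-x_2|$ whenever $x_1,x_2$ lie on the same component and are close enough that the inverse is single-valued (this is where the radius $\delta_W$ enters — pick it smaller than half the minimal distance between distinct components and small enough that any arc of diameter $\le\delta_W$ is covered by a single chart of $z^{-1}$). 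On the other hand $w\in C^1([0,T];\R^2)$ by Proposition \ref{prop:diff}, so $w$ is Lipschitz on $[0,T]$ with some constant $L_w$. Composing, $|W(x_1)-W(x_2)|=|w(z^{-1}(x_1))-w(z^{-1}(x_2))|\le L_w\,|z^{-1}(x_1)-z^{-1}(x_2)|\le L_w c\,|x_1-x_2|$, giving the claim on that component with $L_W:=L_w c$. Taking the maximum of the finitely many constants $L_w c$ and the minimum of the finitely many admissible radii yields the uniform $L_W,\delta_W$.

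The main obstacle is not any single estimate but the bookkeeping around the inverse parametrization: one must guarantee that for points $x_1,x_2$ on the \emph{same} component with $|x_1-x_2|\le\delta_W$, the preimages $z^{-1}(x_1),z^{-1}(x_2)$ can be chosen in $[0,T]$ so that $|z^{-1}(x_1)-z^{-1}(x_2)|$ is genuinely controlled by $|x_1-x_2|$ — the subtlety being that on a closed curve two nearby points have parameter values that are close \emph{modulo $T$}, so one works with the arc-length metric on the curve and uses that $z$ (being a $C^2$ immersion of a compact $1$-manifold) is bi-Lipschitz between the arc-length metric and the ambient Euclidean metric on scales below some threshold; shrinking $\delta_W$ accordingly removes the ambiguity. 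Everything else — the Taylor expansion for \eqref{eq:W}, the non-vanishing of $z'$, the $C^1$-regularity of $w$ — is immediate from the results already recorded in Section \ref{sec:p}.
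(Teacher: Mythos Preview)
Your proposal is correct and follows essentially the same route as the paper's own proof: for \eqref{eq:W} you differentiate the identity $(g+\l h)(z_\l(t))\equiv 0$ in $\l$ at $\l=0$ (the paper does the same via the auxiliary $w(\l,\cdot)$ for $\l\in(0,\l_0]$ and then evaluates at $\l=0$), and for the local Lipschitz estimate you compose the Lipschitz continuity of $w\in C^1$ with a local Lipschitz bound for $z^{-1}$ coming from $|z'|=|\nabla g|\ge\delta>0$ on the level set. Your handling of the ``modulo $T$'' ambiguity for $z^{-1}$ is in fact slightly more explicit than the paper's, which invokes uniform continuity of $z^{-1}:\{g=0\}^\circ\to[0,T)$ without commenting on the wrap-around at the initial point.
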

\begin{proof}(i) We prove the first assertion on each component of $\{g=0\}$. Define \begin{equation} w({ \l},\cdot)=\lim_{\hat \l \to   \l} \frac{z_{ \hat \l}-z_{  \l}}{\hat \l-  \l}\in \CC^1([0,T]; \R^2)\quad \forall\,\l \in (0,\l_0], \end{equation}where $\l_0>0$ is small and $z_{\hat \l},z_{\l}$ are the solutions of the Hamiltonian system \eqref{ham_e} associated to the respective component $\{g=0\}^\circ$.  The above limit  is computed in a similar way to $w$, see \eqref{wa}, cf.\,also the proof of \cite[Prop.\,3.2]{oc_t}. From Lemma \ref{lem:glh} we know that
\[(g+\l h)(z_{\l}(t))=0, \quad t \in (0,\infty), \quad \forall\,\l \in (0,\l_0]. \]
 Deriving with respect to $\l$, where we use Proposition \ref{prop:diff}, cf.\,also \eqref{wl0}, then yields
\[\nabla (g+\l h)(z_{\l}(t))w(\l,t)+h(z_{\l}(t)) =0, \quad t \in (0,\infty), \quad \forall\,\l \in (0,\l_0]. \]Setting $\l:=0$ gives in turn
that $w$ satisfies  \[\nabla g(z(t))w(t)+h(z(t)) =0, \quad t \in (0,\infty). \]In light of \eqref{eq:wa}, the proof of \eqref{eq:W} is complete.

(ii) We only show the estimate for a fixed component $\{g=0\}^\circ$; since their number is finite (Proposition \ref{prop:ham}), the desired assertion follows by building the minimum over all $\delta_W$'s and the maximum over all $L_W$'s. Let $z$ be the solution of the Hamiltonian system \eqref{ham} associated to  $\{g=0\}^\circ$. 
We observe that, since $z_i \in \CC^2[0,T], i=1,2$, it holds for all $\eps>0$
\[|z_i(t_1)-z_i(t_2)-z_i'(t_2)(t_1-t_2)|\leq {c}{(t_1-t_2)^2}\leq \eps |t_1-t_2|, \quad \forall |t_1-t_2|\leq \eps/c, \]where $c:=\frac{\|z_i''\|_{\CC[0,T]}}{2}>0$ depends only on $g$. Then,
\[(|z_i'(t_2)|-\eps)|t_1-t_2| \leq |z_i(t_1)-z_i(t_2)|, \quad \forall |t_1-t_2|\leq \eps/c.\]
Adding the above estimates for $i=1,2$ yields
\begin{equation}\label{1}(|z_1'(t_2)|+|z_2'(t_2)|-2\eps)|t_1-t_2| \leq \sum_{i=1}^2 |z_i(t_1)-z_i(t_2)|, \quad \forall |t_1-t_2|\leq \eps/c.\end{equation}
Further, in light of \eqref{ham}, it holds
\begin{equation}\label{2} |z_1'(t_2)|+|z_2'(t_2)|\geq \sqrt{\sum_{i=1}^2 |z_i'(t_2)|^2}=|\nabla g(z(t_2))|\geq \min_{\partial \O_g} |\nabla g| >0.\end{equation}
With $\eps:=\frac{1}{4}\min_{\partial \O_g} |\nabla g|>0$ (this is non-negative by the Weierstrass theorem, since $g \in \FF$ and $\partial \O_g$ is compact), we have
\[(\frac{1}{2}\min_{\partial \O_g} |\nabla g|)|t_1-t_2| \leq \sqrt{2}|z(t_1)-z(t_2)|, \quad \forall |t_1-t_2|\leq \eps/c.\]

Further we notice that $z^{-1}:\partial \O_g \to [0,T)$ is uniformly continuous, since it is continuous (as its inverse is continuous) on a compact set.
Let $x_1,x_2 \in \partial \O_g$ be arbitrary but fixed and define 
\[\delta_W:=\delta (\eps/c, z^{-1})>0\] so that $|z^{-1}(x_1)-z^{-1}(x_2)|\leq \eps/c$ for all $|x_1-x_2|\leq \delta_W$.
Abbreviate $t_1=z^{-1}(x_1)$ and $t_2=z^{-1}(x_2)$. Then, using \eqref{eq:wa} and the regularity of $w$, we get
\begin{equation}\label{ww}\begin{aligned}
|W(x_1)-W(x_2)|\leq |w(t_1)-w(t_2)|\leq L_w |t_1-t_2|\leq  \frac{2\sqrt 2}{\min_{\partial \O_g} |\nabla g|}|x_1-x_2|
\end{aligned}\end{equation}since $|z^{-1}(x_1)-z^{-1}(x_2)|\leq \eps/c$.
As $x_1,x_2 \in \partial \O_g$ were arbitrary with $|x_1-x_2|\leq \delta_W$, the desired Lipschitz continuity follows from \eqref{ww}, by setting $L_W:=\frac{2\sqrt 2}{\min_{\partial \O_g} |\nabla g|}$.
\end{proof}

Now we are in the position to  introduce the candidate for the directional derivative of $\SS$ at $g$ in direction $h$. This is 
the unique solution $q \in W^{1,p}(\O_g)$ of 
\begin{equation}\label{q}
-\laplace  q +  \beta'(y_g;q) =0 \quad \text{in }  \O_{g}, \end{equation}
\begin{equation}\label{q0}
q+ \nabla y_gW =0 \quad \text{on } \partial \O_{g},\end{equation}
where $W:\partial \O_g \to \R^2$ is given by Definition \ref{def:w}. The fact that \eqref{q}-\eqref{q0} is indeed uniquely solvable is shown in Proposition \ref{prop:q} below. Before we proceed with its proof let us state an essential remark.
\begin{remark}\label{rem:sd}
The unique solution $q \in W^{1,p}(\O_g)$ of \eqref{q}-\eqref{q0} is the shape derivative of $y_{\O_g}$ in direction $W$ \cite[Def.\,2.85]{sz}. Indeed, when we compare with the results stated in \cite[Sec.\,3.1]{sz} (where, with the notations used there, we have $h(\O):=f-\beta(y_{\O})$ and $z(\G):=0$), we see that  \eqref{q} corresponds to \cite[Eq.\,(3.8)]{sz} and \eqref{q0} is in fact \cite[Eq.\,(3.6)]{sz}. In the latter case, it is essential to notice that $\nabla y_g W=\nabla y_g n_{\partial \O_g} (W n_{\partial \O_g})$ on $\partial \O_g$, where $n_{\partial \O_g}:\partial \O_g \to \R^2$ is the outer normal vector field of $\partial \O_g$; this can be proven by differentiating $y_g(z(t))=0,\ t\in (0,\infty)$ w.r.t.\,variable $t$, which yields that, on $\partial \O_g$, we have $\nabla y_g =c\,n_{\partial \O_g}$, where $c$ is a real-valued function defined on $\partial \O_g$. See also e.g.\,\cite[Eq.\,(5.28)-(5.29)]{hp} for a comparison in the linear case.
\end{remark}
\begin{proposition}\label{prop:q}Suppose that $g, h \in \FF$ and \eqref{hx00} is true. 
The Dirichlet problem with non-homogeneous boundary conditions \eqref{q}-\eqref{q0} admits a unique solution $q \in W^{1,p}(\O_g)$.
\end{proposition}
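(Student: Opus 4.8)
The plan is to move the inhomogeneous boundary condition \eqref{q0} into the interior, solve the resulting problem with homogeneous Dirichlet data in $H_0^1(\O_g)$ by a monotone operator argument, and then bootstrap the solution to $W^{1,p}(\O_g)$ by elliptic regularity. The preliminary observation is that the boundary datum has the right regularity. Indeed, $y_g=\SS(g)\in W^{2,p}(\O_g)\cap H_0^1(\O_g)$ by Definition \ref{S}, so $\nabla y_g\in W^{1,p}(\O_g;\R^2)$ and hence its trace belongs to $W^{1-1/p,p}(\partial\O_g;\R^2)$; moreover, by the embedding $W^{2,p}(\O_g)\embed \CC^{1,\alpha}(\bar\O_g)$ with $\alpha=1-2/p$, the number $M:=\|y_g\|_{\CC(\bar\O_g)}$ is finite. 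By the second part of Lemma \ref{lem:W}, together with the finiteness of the number of components of $\{g=0\}$ and the compactness of $\partial\O_g$, the field $W$ is globally Lipschitz on $\partial\O_g$; since multiplication by a Lipschitz function is a bounded operator on $W^{s,p}$ for $s\in(0,1)$ and $1-1/p\in(0,1)$, it follows that $\phi:=-\nabla y_g W\in W^{1-1/p,p}(\partial\O_g)$. As $\O_g\in\CC^2$, the trace operator $W^{1,p}(\O_g)\to W^{1-1/p,p}(\partial\O_g)$ is surjective, so I would fix some $q_0\in W^{1,p}(\O_g)\subset H^1(\O_g)$ (the inclusion holding since $p>2$ and $\O_g$ is bounded) with $q_0|_{\partial\O_g}=\phi$, and write $q=q_0+\tilde q$, reducing \eqref{q}--\eqref{q0} to finding $\tilde q\in H_0^1(\O_g)$.

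For the existence step I would use monotone operator theory. First note that Assumption \ref{assu:stand} implies that, for a.e.\ $x$, the function $d\mapsto\beta'(y_g(x);d)$ is monotone nondecreasing, Lipschitz with constant $L_M$, and vanishes at $d=0$; hence the Nemytskii operator $N:L^2(\O_g)\to L^2(\O_g)$, $N(v):=\beta'(y_g(\cdot);v(\cdot))$, is well-defined, monotone, Lipschitz continuous, and satisfies $|N(v)|\le L_M|v|$ pointwise. I would then consider $A:H_0^1(\O_g)\to H^{-1}(\O_g)$ defined by $\langle Av,\psi\rangle:=\int_{\O_g}\nabla v\cdot\nabla\psi\,dx+\int_{\O_g}N(q_0+v)\,\psi\,dx$. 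Using the two-dimensional embedding $H_0^1(\O_g)\embed L^r(\O_g)$ for every $r<\infty$ together with the Lipschitz continuity of $N$, one checks that $A$ is bounded and continuous; it is strictly monotone thanks to the Laplacian term, and coercive because $\int_{\O_g}(N(q_0+v)-N(q_0))\,v\,dx\ge0$ by monotonicity, so that $\langle Av,v\rangle\ge\|\nabla v\|_{L^2(\O_g)}^2-C\|v\|_{H_0^1(\O_g)}$ with $C$ depending only on the data, whence Poincar\'e's inequality gives coercivity. The Browder--Minty theorem then yields a unique $\tilde q\in H_0^1(\O_g)$ with $A\tilde q=\ell$, where $\ell\in H^{-1}(\O_g)$ represents $-\laplace q_0$; equivalently, there is a unique $q\in H^1(\O_g)$ with $q-q_0\in H_0^1(\O_g)$ solving \eqref{q}--\eqref{q0} in the weak sense. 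Uniqueness also follows directly by subtracting two solutions, testing with their difference, and invoking the monotonicity of $N$ together with Poincar\'e's inequality.

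To conclude I would improve the regularity of $q$. Since $q\in H^1(\O_g)$, the two-dimensional Sobolev embedding gives $q\in L^p(\O_g)$, hence $F:=-N(q)\in L^p(\O_g)$ by the pointwise bound; thus $q$ solves $-\laplace q=F\in L^p(\O_g)$ with $q|_{\partial\O_g}=\phi\in W^{1-1/p,p}(\partial\O_g)$. Writing $q=q_1+q_2$, where $q_1\in W^{2,p}(\O_g)\cap H_0^1(\O_g)$ is the solution of $-\laplace q_1=F$ (standard $L^p$-elliptic regularity on the $\CC^2$ domain $\O_g$, exactly as for the state $y_g$) and $q_2:=q-q_1$ is harmonic in $\O_g$ with trace $\phi\in W^{1-1/p,p}(\partial\O_g)$, so that $q_2\in W^{1,p}(\O_g)$ by the $L^p$-theory for the harmonic Dirichlet problem on the $\CC^2$ domain $\O_g$, I conclude $q\in W^{1,p}(\O_g)$. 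Uniqueness in $W^{1,p}(\O_g)$ is inherited from uniqueness in $H^1(\O_g)$, since $W^{1,p}(\O_g)\subset H^1(\O_g)$.

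The step I expect to require the most care is the treatment of the non-homogeneous boundary datum: one must verify that $-\nabla y_g W$ lies in the trace space $W^{1-1/p,p}(\partial\O_g)$, which cannot be deduced from the H\"older regularity of $\nabla y_g$ alone (its exponent $1-2/p$ is strictly smaller than $1-1/p$), but only from $\nabla y_g$ being the trace of a $W^{1,p}$-field combined with the global Lipschitz property of $W$ furnished by Lemma \ref{lem:W}; correspondingly, one has to identify the exact regularity of the solution (namely $W^{1,p}$, and not $W^{2,p}$) that is compatible with this datum. The monotone-operator existence and the elliptic bootstrap are otherwise routine.
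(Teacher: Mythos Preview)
Your proof is correct and follows essentially the same strategy as the paper: verify that the boundary datum $-\nabla y_g\,W$ lies in $W^{1-1/p,p}(\partial\O_g)$ via the Lipschitz property of $W$ from Lemma~\ref{lem:W}, obtain a first weak solution from the monotonicity of $\beta'(y_g;\cdot)$, and then upgrade its regularity to $W^{1,p}(\O_g)$ by elliptic theory. The paper's argument is considerably more terse (it invokes \cite[Thm.~4.8]{troe} for the existence step and then simply appeals to ``standard $W^{1,p}$ regularity theory''), whereas you spell out the Browder--Minty argument and the regularity split $q=q_1+q_2$ explicitly; this makes your version more self-contained, and your bootstrap via $q\in H^1(\O_g)\hookrightarrow L^p(\O_g)$ (rather than the paper's $q\in L^\infty(\O_g)$, which tacitly relies on the cited reference) is arguably the cleaner route in dimension two.
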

\begin{proof}
Since $\beta'(y_g;\cdot):\R \to \R$ is monotone increasing (as a result of Assumption \ref{assu:stand}.\ref{it:stand2}), we can apply for instance \cite[Thm.\,4.8]{troe} to obtain  a first regularity result.
This allows us to make use of the estimate in the standing Assumption \ref{assu:stand}.\ref{it:stand2} to deduce
$\beta'(y_g;q) \in {L^\infty(\O_g)} \embed W^{-1,p}(\O_g)$. 
Thanks to {Lemma \ref{lem:W}.(ii), we conclude that $W\in W^{1,\infty}(\partial \O_g)$ (see e.g.\,\cite[Sec.\,5.8.2]{evans}), whence $ \nabla y_gW \in W^{1-1/p,p}(\partial \O_g)$ follows (recall that $y_g \in W^{2,p}(\O_g)$). Standard arguments from the $W^{1,p}(\O_g)$ regularity theory 
 then lead to the desired assertion.}
\end{proof}

\begin{remark}\label{rem:q}
As in the case of $y_g$, see Remark \ref{rem:S}, $q$  exists only as an element of $W^{1,p}(\O_g).$ Whenever we refer to the values of $q$ outside $\O_g$,  we think of its extension by zero, which yields $q \in L^\infty(D)$.
\end{remark}

\subsection*{Analysis of the difference quotient}
We continue our investigations by taking a closer look at  the term
\[ \frac{\SS(g+\l h)-\SS(g)}{\l}-q, \quad \l>0 \text{ small}.\]
 Given $g, h \in \FF$, we use for simplicity  in what follows the abbreviations:
 $\omega_\l:=\O_g \cap \O_{g+\l h}$, $q_\l:=\frac{ y_{g+\l h}-y}{\l}$ and $m_\l:= \|q_\l-q\|_{\CC(\partial \omega_\l)}$, where $\l>0$ is small enough (so that the unicity of the approximating closed curves is guaranteed, cf.\,Proposition \ref{lem:unique}).
\begin{lemma}\label{lem:lr}Suppose that $g, h \in \FF$ and \eqref{hx00} is true.  Then \[\||q_\l-q|-m_\l\|_{L^r(\widetilde \o_\l)}  \to 0, \quad \forall\,r\in [1,\infty),\]where $\widetilde \o_\l:=\{x \in \omega_\l:|q_{\l}-q|>m_\l\}$.\end{lemma}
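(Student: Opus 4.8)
The claim compares $|q_\l - q|$ with its supremum-type bound $m_\l = \|q_\l - q\|_{\CC(\partial\o_\l)}$ on the (possibly empty) set $\wol := \{x \in \o_\l : |q_\l - q| > m_\l\}$ where the pointwise difference exceeds the boundary bound, and asserts that the excess $|q_\l - q| - m_\l$ goes to zero in every $L^r(\wol)$. The natural approach is to derive, on $\o_\l$, a PDE (or variational inequality) satisfied by $q_\l - q$, and then to exploit a comparison/maximum principle that controls the interior values of $q_\l - q$ by its boundary values $m_\l$ plus an interior source term that vanishes as $\l \searrow 0$. Concretely, I would first write down the equation for $q_\l$ on $\o_\l$: subtracting \eqref{q} from the PDE for $y_{g+\l h}$ (valid on $\o_\l \subset \O_{g+\l h}$) and dividing by $\l$, one gets
\begin{equation*}
-\laplace q_\l + \frac{\beta(y_{g+\l h}) - \beta(y_g)}{\l} = 0 \quad \text{in } \o_\l.
\end{equation*}
Together with \eqref{q}, this gives $-\laplace(q_\l - q) + r_\l = 0$ in $\o_\l$, where $r_\l := \frac{\beta(y_{g+\l h}) - \beta(y_g)}{\l} - \beta'(y_g; q)$. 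Using the directional differentiability \eqref{eq:dir} of the Nemytskii operator together with the (locally) Lipschitz bound \eqref{eq:flip} applied to $\frac{\beta(y_{g+\l h})-\beta(y_g)}{\l} - \frac{\beta(y_g + \l q)-\beta(y_g)}{\l}$ — here one needs $q_\l \to q$ in $L^\infty(\o_\l)$, which should follow from the uniform $\CC^{1,\alpha}$ bound \eqref{w2p_bound} and the convergence results of Proposition \ref{lem:conv_yl} — I expect $\|r_\l\|_{L^p(\o_\l)} \to 0$.

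Given this, I would split $q_\l - q = \zeta_\l + \eta_\l$ where $\zeta_\l$ solves $-\laplace \zeta_\l = -r_\l$ in $\o_\l$ with $\zeta_\l = 0$ on $\partial\o_\l$, and $\eta_\l$ is harmonic in $\o_\l$ with boundary data $q_\l - q$. By the maximum principle, $\|\eta_\l\|_{\CC(\bar\o_\l)} \le m_\l$, hence $|q_\l - q| - m_\l \le |\zeta_\l| + |\eta_\l| - m_\l \le |\zeta_\l|$ pointwise, and a fortiori on $\wol$ one has $0 < |q_\l - q| - m_\l \le |\zeta_\l|$. Therefore $\||q_\l-q| - m_\l\|_{L^r(\wol)} \le \|\zeta_\l\|_{L^r(\o_\l)}$, and it suffices to show the latter tends to zero. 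Since $\O_{g+\l h} \subset D$ for $\l$ small and $\o_\l \subset \O_g$ is contained in a fixed bounded set, elliptic $W^{2,p}$ (or even just $H^1_0$ combined with Sobolev embedding) estimates on $\o_\l$ give $\|\zeta_\l\|_{L^r(\o_\l)} \le C \|r_\l\|_{L^p(\o_\l)}$ with $C$ uniform in $\l$, provided the constant can be taken independent of the domain $\o_\l$; this uniformity follows because $\o_\l$ has a uniform cone/Lipschitz property (the boundary pieces are $\{g=0\}$, $\{g+\l h=0\}$, controlled uniformly via Lemma \ref{lem:glh} and Proposition \ref{lem:unique}), or alternatively by extending $r_\l$ by zero to $D$ and using a global estimate on $D$ with the $H^1_0(\o_\l) \embed H^1_0(D)$ inclusion.

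The subtlety — and what I expect to be the main obstacle — is twofold. First, because $\beta$ is only directionally differentiable, $q_\l - q$ does not solve a linear equation but rather an inclusion/variational inequality; one must be careful that the ``source'' $r_\l$ genuinely has vanishing $L^p$ norm, which requires knowing $q_\l \to q$ in $L^\infty(\o_\l)$ \emph{before} establishing the present lemma (so there is a potential ordering-of-results issue — presumably the $L^\infty$ convergence of $q_\l$ on compact subsets or on $\o_\l$ is available from the preceding analysis, or is precisely what Lemma \ref{lem:lr} is a stepping stone toward, in which case one argues with a subsequence and a bootstrap). Second, the maximum-principle splitting must be justified at the level of $W^{1,p}$ solutions on the non-smooth, $\l$-dependent domain $\o_\l$: the decomposition $q_\l - q = \zeta_\l + \eta_\l$ and the bound $\|\eta_\l\|_{\CC} \le m_\l$ require that $q_\l - q$ be continuous up to $\bar\o_\l$ (true by \eqref{w2p_bound} and $W^{2,p} \embed \CC^{1,\alpha}$) and that the weak maximum principle apply, which is fine since $-\laplace$ is the only second-order operator and the monotonicity of $\beta'(y_g; \cdot)$ only helps. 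If instead the author avoids the splitting, the cleanest alternative is to test the equation for $(q_\l - q)$ directly: let $v_\l := (|q_\l - q| - m_\l)^+ \operatorname{sign}(q_\l - q) \in H^1_0(\o_\l)$ (this vanishes on $\partial\o_\l$ by definition of $m_\l$), test the PDE for $q_\l - q$ against $v_\l$, use monotonicity of $\beta'(y_g;\cdot)$ and of the difference quotient of $\beta$ to discard the zeroth-order terms with a favorable sign, and obtain $\|\nabla v_\l\|_{L^2(\wol)}^2 \le \|r_\l\|_{L^{p}} \|v_\l\|_{L^{p'}}$; Poincaré on $\o_\l$ (uniform constant) and Sobolev embedding then close the estimate and give $\|v_\l\|_{L^r(\wol)} \to 0$ for all finite $r$, which is exactly the assertion. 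I would present the proof via this second route, as it handles the non-smoothness of $\beta$ most transparently.
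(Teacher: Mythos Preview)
Your second route (testing against $v_\l = (|q_\l - q| - m_\l)^+ \operatorname{sign}(q_\l - q)$) is exactly the paper's argument, but as written it still contains the circularity you yourself flag, and that circularity is the whole point of the lemma. You end up with $\|\nabla v_\l\|_{L^2}^2 \le \|r_\l\|\,\|v_\l\|$ where $r_\l = \frac{\beta(y_{g+\l h}) - \beta(y_g)}{\l} - \beta'(y_g;q)$, and then you need $r_\l \to 0$, which in turn needs $q_\l \to q$ in $L^\infty(\o_\l)$. That convergence is \emph{not} available before this lemma; it is what Propositions~\ref{prop:lr} and~\ref{prop:ml} are building toward. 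No bootstrap or subsequence argument rescues this: the lemma must be proved without any a~priori knowledge of $q_\l \to q$.

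The missing step is that the monotonicity of $\beta$ lets you discard not an innocuous sign term but precisely the $q_\l$-dependence of the source. Split
\[
r_\l \;=\; \underbrace{\frac{\beta(y_{g+\l h}) - \beta(y_g + \l q)}{\l}}_{=:r_\l^{(1)}} \;+\; \underbrace{\frac{\beta(y_g + \l q) - \beta(y_g)}{\l} - \beta'(y_g;q)}_{=:r_\l^{(2)}}.
\]
On the set $\{q_\l - q > m_\l\}$ one has $y_{g+\l h} > y_g + \l q$, so by monotonicity $r_\l^{(1)} \ge 0$ there; testing with $(q_\l - q - m_\l)^+$ therefore drops $r_\l^{(1)}$ entirely. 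What survives on the right is only $\|r_\l^{(2)}\|_{L^2(\o_\l)}$, and this term involves the \emph{fixed} functions $y_g, q \in L^\infty(\O_g)$, so it tends to zero by \eqref{eq:dir} alone --- no information about $q_\l$ is needed. The same splitting handles $(q - q_\l - m_\l)^+$. This is the device that breaks the circle; without it your estimate does not close.

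Your first route (the $\zeta_\l + \eta_\l$ splitting) is strictly worse here: it cannot exploit the sign of $r_\l^{(1)}$ because the Poisson equation for $\zeta_\l$ sees $|r_\l|$, not its signed interaction with the test function. So that approach genuinely requires $\|r_\l\| \to 0$ and is stuck on the circularity. It also carries the regularity burden on the cornered domain $\o_\l$ that you mention, which the testing argument avoids entirely (one only needs $(q_\l - q \mp m_\l)^+ \in H^1_0(\o_\l)$ and the uniform Poincar\'e constant via the extension-by-zero to $D$, both of which are immediate).
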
\begin{proof} Note that   $$q_\l \in W^{2,p}(\omega_\l),$$ by Definition \ref{S}. In light of \eqref{eq} (associated with $g$ and $g+\l h$, respectively) and \eqref{q}, it holds  
\begin{equation}\label{ol}-\laplace (q_\l-q) +[\frac{[\beta(y_{g+\l h}) -\beta(y_g)]}{\l}-\beta'(y_g;q)]=0 \quad \text{in }\omega_\l. \end{equation}
Observe that $(q_\l-q-m_\l)^+ \in H_0^1( \o_\l)$. This allows us to test the above equation with $(q_\l-q-m_\l)^+$. We obtain
\begin{equation}
\int_{ \o_\l} [ \nabla [q_\l-q-m_\l]^+]^2\,dx+\int_{ \o_\l} \underbrace{[\frac{[\beta(y_{g+\l h } )  -\beta(y_g)]}{\l}-\beta'(y_g;q)] }_{=: A_\l}  [q_\l-q-m_\l]^+\,dx =0.\end{equation}Note that 
\[A_\l=\frac{[\beta(y_{g+\l h} )  -\beta(y_g+\l q)]}{\l}+\frac{[\beta(y_g+\l q)  -\beta(y_g)]}{\l}-\beta'(y_g;q).\]
Due to the monotonicity of $\beta$ (Assumption \ref{assu:stand}.\ref{it:stand2}) and since $q_\l -q >m_\l (>0)$ implies $y_{g+\l  h }>y_g+\l q$, we can conclude that the first term in $A_\l$ is non-negative on $\{q_\l -q >m_\l \}$. 
This leads to 
\begin{equation}\label{q1''}
\int_{ \o_\l} [ \nabla [q_\l-q-m_\l]^+]^2\,dx+\int_{ \o_\l} \Big[\frac{[\beta(y_g+\l q) -\beta(y_g)]}{\l}-\beta'(y_g;q)\Big]  [q_\l-q-m_\l]^+\,dx \leq 0.\end{equation}
Since $[q_\l-q-m_\l]^+$ can be extended by zero outside $\o_\l$ (and it can thus be seen as an element in $H_0^1(D)$), we can apply Poincar\'e's inequality with a constant $c>0$ independent of $\l$ and obtain
\begin{equation}\|[q_\l-q-m_\l]^+\|_{L^2( \o_\l)}^2 \leq c\,\Big\|\frac{[\beta(y_g+\l q) -\beta(y_g)]}{\l}-\beta'(y_g;q) \Big\|_{L^{2}( \o_\l)} \|[q_\l-q-m_\l]^+\|_{L^{2}( \o_\l)}.\end{equation}
This yields \begin{equation}\label{q1'}\|[q_\l-q-m_\l]^+\|_{L^2( \o_\l)}\leq c\,\Big\|\frac{[\beta(y_g+\l q) -\beta(y_g)]}{\l}-\beta'(y_g;q) \Big\|_{L^{2}( \o_\l)} .\end{equation}
Adding $\|[q_\l-q-m_\l]^+\|_{L^2( \o_\l)}^2 $ on both sides in \eqref{q1''} then leads to 
\begin{equation}\begin{aligned}
&\|[q_\l-q-m_\l]^+\|_{H^1( \o_\l)}^2 \\& \leq \,\Big\|\frac{[\beta(y_g+\l q) -\beta(y_g)]}{\l}-\beta'(y_g;q) \Big\|_{L^{r'}( \o_\l)} \|[q_\l-q-m_\l]^+\|_{L^{r}( \o_\l)}
\\&\quad +\tilde c \|[q_\l-q-m_\l]^+\|_{L^r( \o_\l)}\|[q_\l-q-m_\l]^+\|_{L^2( \o_\l)},\end{aligned}  \end{equation}
where $r \in [2,\infty)$ is arbitrary and $\tilde c>0$ is the embedding constant of $L^r( \o_\l)\embed L^2( \o_\l)$; this is independent of $\l$, since $\mu( \o_\l)$ is bounded by a constant independent of $\l$. 
By using the same extension argument as above and the embedding $H^1(D) \embed L^r(D),$ as well as \eqref{q1'}, we arrive at 
\begin{equation}\label{q1}\|q_\l-q-m_\l\|_{L^r( \o_\l\cap\{q_\l -q>m_\l\})}\leq C \Big\|\frac{[\beta(y_g+\l q) -\beta(y_g)]}{\l}-\beta'(y_g;q) \Big\|_{L^{2}( \o_\l)} ,\end{equation}where $C:=1+\tilde c \, c>0$ is independent of $\l$.

In a similar manner, we obtain
\begin{equation}\label{q2}\|q-q_\l-m_\l\|_{L^r( \o_\l\cap\{q-q_\l >m_\l\})}\leq \widetilde C \Big\|\frac{[\beta(y_g+\l q) -\beta(y_g)]}{\l}-\beta'(y_g;q) \Big\|_{L^{2}( \o_\l)} ,\end{equation}where $\widetilde C>0$ is independent of $\l$, and we only sketch the arguments.

Testing \eqref{ol} with $(q-q_\l- m_\l)^+ \in H_0^1( \o_\l)$ yields 
\begin{equation}
\int_{ \o_\l} \nabla [q_\l-q] \nabla [q-q_\l-m_\l]^+\,dx+\int_{ \o_\l} \Big[\frac{[\beta(y_{g+\l h}) -\beta(y_g)]}{\l}-\beta'(y_g;q)\Big] [q-q_\l-m_\l]^+\,dx =0,\end{equation}
which accounts to
\begin{equation}-\int_{ \o_\l \cap \{q-q_\l>m_\l\}} (\nabla [q-q_\l-m_\l]^+)^2\,dx+\int_{ \o_\l \cap \{q-q_\l> m_\l\}} A_\l \underbrace{(q-q_\l- m_\l)}_{>0}\,dx 
=0.\end{equation}
Arguing as above we get that the first term in $A_\l$ is  negative on $\{q-q_\l  >m_\l \}$ (since $y_{g+\l h} <y_g+\l q$ there). 
Therefore \begin{equation}\int_{ \o_\l } (\nabla [q-q_\l-m_\l]^+)^2\,dx\leq \int_{ \o_\l} \Big[\frac{\beta(y_g+{\l }q )  -\beta(y_g)}{\l}-\beta'(y_g;q)\Big] {[q-q_\l- m_\l]^+}\,dx,\end{equation} 
whence \eqref{q2} follows (by the exact same arguments as above). Combining \eqref{q1}, \eqref{q2} and \eqref{eq:dir} finally gives the desired assertion.
\end{proof}

\begin{proposition}\label{prop:lr}Suppose that $g, h \in \FF$ and \eqref{hx00} is true. If $m_\l \to 0,$ then \begin{equation}\label{lr}\|q_\l -q \|_{L^{r}(\o_\l)}\to 0 \quad \text{as }\l \searrow 0,\quad \forall\,r \in [1,\infty),\end{equation}and, for each open subset $\o \subset \subset \O_g$, we have 
{ \begin{equation}\label{h1}\|q_\l -q \|_{H^2(\o )}\to 0 \quad \text{as }\l \searrow 0.\end{equation}}\end{proposition}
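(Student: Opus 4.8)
The plan is to derive \eqref{lr} directly from Lemma \ref{lem:lr} together with the hypothesis $m_\l\to 0$, and then bootstrap to the interior $H^2$-estimate \eqref{h1} via local elliptic regularity for \eqref{ol}. For \eqref{lr}, fix $r\in[1,\infty)$; since $\mu(\o_\l)$ is bounded uniformly in $\l$ (recall \eqref{mu}), it suffices to treat $r\geq 2$, the smaller exponents following by the continuous embedding $L^r(\o_\l)\embed L^1(\o_\l)$ with $\l$-independent constant. On the complement $\o_\l\setminus\widetilde\o_\l=\{x\in\o_\l:|q_\l-q|\leq m_\l\}$ one has the pointwise bound $|q_\l-q|\leq m_\l$, hence $\||q_\l-q|\|_{L^r(\o_\l\setminus\widetilde\o_\l)}\leq m_\l\,\mu(\o_\l)^{1/r}\to 0$. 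On $\widetilde\o_\l$ we split via the triangle inequality
\begin{equation*}
\|q_\l-q\|_{L^r(\widetilde\o_\l)}\leq \||q_\l-q|-m_\l\|_{L^r(\widetilde\o_\l)}+m_\l\,\mu(\o_\l)^{1/r},
\end{equation*}
and the first term vanishes as $\l\searrow 0$ by Lemma \ref{lem:lr}, while the second again tends to $0$ since $m_\l\to 0$ and $\mu(\o_\l)$ stays bounded. Adding the two contributions yields \eqref{lr}.

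For \eqref{h1}, fix an open $\o\subset\subset\O_g$. By Lemma \ref{lem:l} (or the argument in its proof) there is $\l_0>0$ and an intermediate open set $\o'$ with $\o\subset\subset\o'\subset\subset\O_g\cap\O_{g+\l h}=\o_\l$ for all $\l\in(0,\l_0]$. The difference quotient satisfies \eqref{ol}, i.e.
\begin{equation*}
-\laplace(q_\l-q)=-\,A_\l:=-\Big[\tfrac{\beta(y_{g+\l h})-\beta(y_g)}{\l}-\beta'(y_g;q)\Big]\quad\text{in }\o_\l.
\end{equation*}
Writing $A_\l=\tfrac{\beta(y_{g+\l h})-\beta(y_g+\l q)}{\l}+\tfrac{\beta(y_g+\l q)-\beta(y_g)}{\l}-\beta'(y_g;q)$, the local Lipschitz property \eqref{eq:flip} of $\beta$ together with the uniform bound \eqref{w2p_bound} (so $y_{g+\l h},y_g,q$ all lie in a fixed ball of $L^\infty$) controls the first summand by $L\,|q_\l-q|$ in $L^2$, and the second-plus-third summand tends to $0$ in $L^2(\o_\l)$ by the directional differentiability \eqref{eq:dir}. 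Hence $\|A_\l\|_{L^2(\o')}\leq L\,\|q_\l-q\|_{L^2(\o')}+o(1)\to 0$, using \eqref{lr} (with $r=2$ on $\o'\subset\o_\l$) which is available precisely because $m_\l\to 0$. Interior $L^2$-elliptic regularity applied on the pair $\o\subset\subset\o'$ then gives
\begin{equation*}
\|q_\l-q\|_{H^2(\o)}\leq C\big(\|A_\l\|_{L^2(\o')}+\|q_\l-q\|_{L^2(\o')}\big)\to 0,
\end{equation*}
with $C$ depending only on $\o,\o'$ and hence independent of $\l$, which is \eqref{h1}.

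\textbf{Main obstacle.} The delicate point is not the elliptic estimate itself but ensuring all constants are genuinely $\l$-independent: the embedding constants $L^r(\o_\l)\embed L^2(\o_\l)$ and the Poincar\'e-type constants hide behind the uniform bound on $\mu(\o_\l)$ and on $\o_\l\subset D$, while the interior regularity constant is made uniform by freezing the intermediate domain $\o'$ once and for all (possible because $\o\subset\subset\O_g$ and $\O_{g+\l h}\to\O_g$ in the Hausdorff sense, cf.\ the remark after Lemma \ref{lem:l}). One must also keep track that the splitting of $A_\l$ is the same device already used in the proof of Lemma \ref{lem:lr}, so that the $L^2$-smallness of $A_\l$ reduces exactly to \eqref{eq:dir} plus the already-established $L^2$-convergence $q_\l\to q$ on compact subsets — which is where the assumption $m_\l\to 0$ is indispensable.
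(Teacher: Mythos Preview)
Your proof is correct and follows essentially the same route as the paper: the $L^r$-convergence is obtained by splitting $\o_\l$ into $\widetilde\o_\l$ and its complement and invoking Lemma \ref{lem:lr} together with $m_\l\to 0$, and the $H^2$-convergence is obtained from interior elliptic regularity applied to \eqref{ol} after splitting $A_\l$ exactly as you do. The only cosmetic difference is how the $\l$-independence of the interior regularity constant is secured: you freeze an intermediate domain $\o'\subset\subset\o_\l$, whereas the paper applies the estimate on the pair $\o\subset\subset\o_\l$ directly and bounds $1/\dist(\bar\o,\partial\o_\l)$ uniformly via the Hausdorff convergence \eqref{h_d_c}.
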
\begin{proof}Let $r \in [1,\infty)$ be fixed. 
In view of Lemma \ref{lem:lr} and since $m_\l \to 0,$ by assumption, we have
\begin{align*}\|q_\l-q \|_{L^r( \o_\l)} &\leq \|q_\l-q \|_{L^r(\widetilde \o_\l)}  +\|q_\l-q \|_{L^r( \{|q_{\l}-q|<m_\l \} )}
\\&\leq \||q_\l-q|-m_\l \|_{L^r(\widetilde \o_\l)} +\mu(\widetilde \o_\l)^{1/r} m_\l +\mu(\o_\l)^{1/r}m_\l
\\&\qquad \to 0 \quad \text{as }\l \searrow 0,\end{align*}where we recall $\widetilde \o_\l=\{x \in \omega_\l:|q_{\l}-q|>m_\l\} $, and use that $\o_{\l} \subset D$. This proves the first convergence.

We next show the second desired assertion.  Recall that $\o \subset \subset \O_g$ implies $\o \subset \subset \O_{g+\l h}$ for $\l>0$ small (cf.\,proof of Lemma \ref{lem:l}), and thus $\o \subset \subset \o_\l$. Due to  {\cite[Sec.6.3.1,\ Thm.\,1]{evans}} applied for \eqref{ol}, we get that $q_\l -q \in H^2(\o)$ and 
\begin{align*}\|q_\l -q\|_{H^2(\o)}\leq c \Big(\Big\|\frac{\beta(y_{g+\l h} )  -\beta(y_g)}{\l}-\beta'(y_g;q)\Big\|_{L^2(\o_{\l})}+\|q_\l -q\|_{L^2(\o_\l)}\Big),
\end{align*}
where  {$c>0$ depends only on $\frac{1}{\dist(\bar \o,\partial \o_\l)}$ and the given data, see \cite[Ch.\,3,\ Thm.\,4.2]{chen_wu}}; to see that $\frac{1}{\dist(\bar \o,\partial \o_\l)}$ is bounded from above by a constant independent of $\l$, define $\rho:=\dist (\bar \o,\partial \O)>0$ and use that $\partial \o_\l \subset \{x\in D:\dist (x,\partial \O)\leq \rho/2\}$ for $\l $ small enough (which is due to \eqref{h_d_c}). Then $ {\dist(\bar \o,\partial \o_\l)}\geq \rho/2$ and thus,  $c>0$ is independent of $\l$.

 Since $\beta$ satisfies \eqref{eq:flip}, the above estimate leads  to
\begin{align*}&\|q_\l -q\|_{H^2(\o)}\\&\leq c \Big(\Big\|\frac{\beta(y_{g+\l h} )  -\beta(y_g+\l q)}{\l}\Big\|_{L^2(\o_\l)}+\Big\|\frac{\beta(y_g+\l q)  -\beta(y_g)}{\l}-\beta'(y_g;q)\Big\|_{L^2(\o_\l)}
\\&\quad +\|q_\l -q\|_{L^2(\o_\l)}\Big)
\\&\leq c\Big(L_M\,\|q_\l -q\|_{L^2(\o_\l)}+\Big\|\frac{\beta(y_g+\l q)  -\beta(y_g)}{\l}-\beta'(y_g;q)\Big\|_{L^2(\o_\l)}\Big)
\\&\quad +c\|q_\l -q\|_{L^2(\o_\l)},
\end{align*} where $M:=\sup \{\|y_{g+\l h}\|_{L^\infty(\o_\l)}, \|y_g+\l q\|_{L^\infty(\o_\l)}\}$;  the fact that $\|y_{g+\l h}\|_{L^\infty(\o_\l)}$ is uniformly bounded is owed to \eqref{w2p_bound} (note that $\|y_g+\l q\|_{L^\infty(\o_\l)}$ is uniformly bounded as well since $\o_\l \subset \O_g$ and $y_g$ and $q$ are both continuous on $\O_g$).

Now, \eqref{h1} follows from \eqref{lr} and  \eqref{eq:dir}. This completes the proof.
\end{proof}

Given the result in Proposition \ref{prop:lr}, our next goal is to prove that $m_\l \to 0,$ by looking at the convergence behaviour of the difference quotient $\frac{ y_{g+\l h}-y}{\l}-q$ on the boundary of $ \o_\l=  \O_g \cap  \O_{g+\l h}$. To this end, we observe that \begin{equation}\label{b_ol}\partial \o_\l=\G_2 \cup (\partial \O_{g+\l h} \cap \O_g),\end{equation}where $\G_2$ is given by \eqref{g2}, that is, $\partial \o_\l$ consists of a finite union of (relatively closed) curves that  remain fixed with varying $\l$,  while $(\partial \O_{g+\l h} \cap \O_g)$ varies with changing $\l$. 
  \begin{figure}[ht]\label{fig:ol}
\begin{center}
\includegraphics[width=0.5\textwidth]{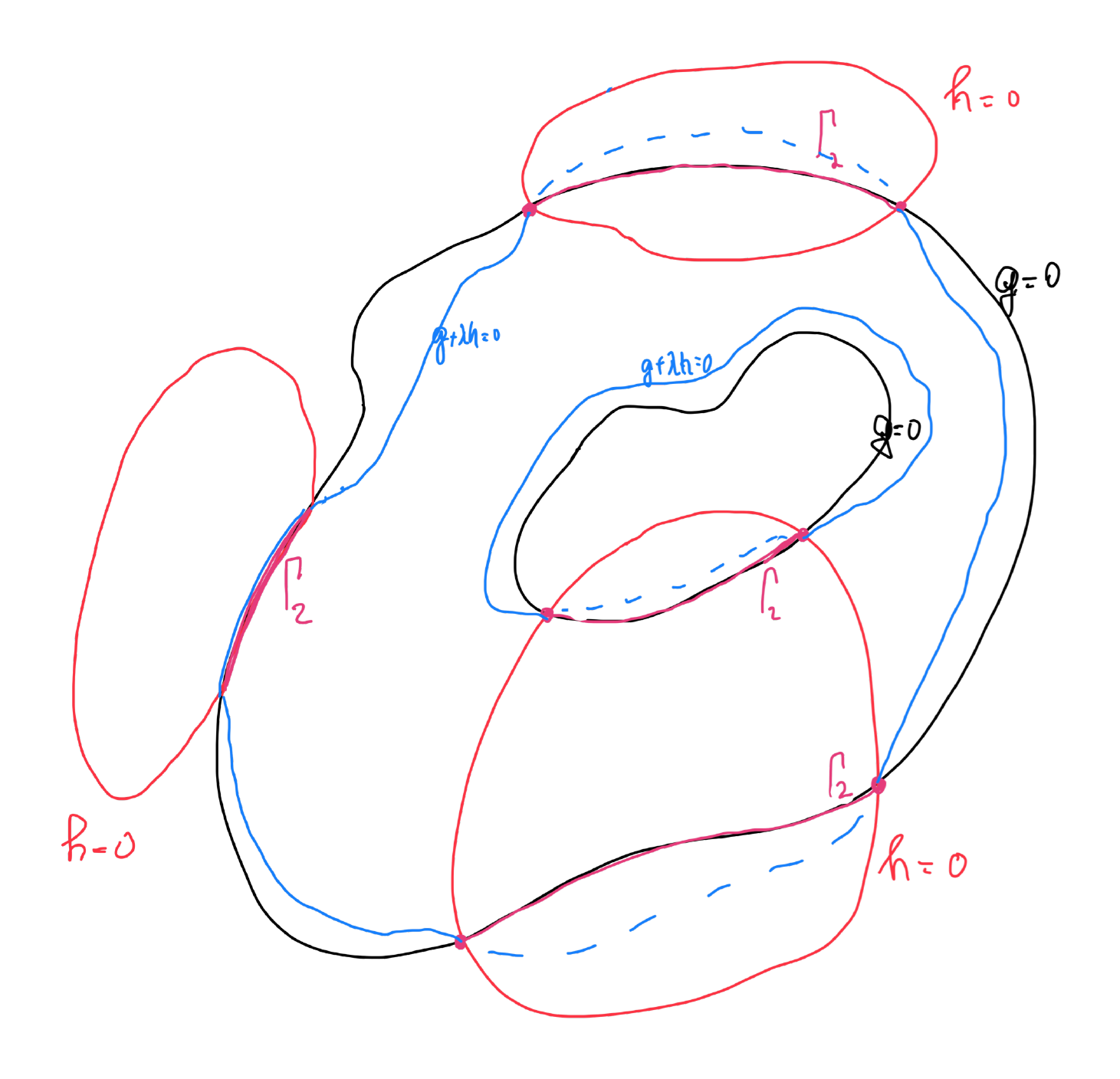}
\end{center}
\caption{The blue curves describe  $\partial \O_{g+\l h}$, while the black curves represent  $\partial \O_g$. The doted blue lines are those parts of $\partial \O_{g+\l h}$ that are outside $\bar \O_g$}
\end{figure}  
\begin{lemma}[Behaviour of the difference quotients on $\G_2$]\label{lem:h0}
Suppose that $g, h \in \FF$ and \eqref{hx00} is true. Then,
 \[\|q_{\l}-q\|_{\CC(\G_2)} \to 0\quad \text{as }\l \searrow 0.\]
  \end{lemma}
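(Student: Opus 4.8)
The plan is to exploit the decomposition $\G_2=\partial\O_g\cap\bar\O_{g+\l h}$, which is \emph{independent of $\l$}, and to analyse the difference quotient pointwise along the trajectory $z$ of the Hamiltonian system \eqref{ham} parametrising each component of $\partial\O_g$. First I would fix a component $\{g=0\}^\circ$ and let $z:[0,T]\to\{g=0\}^\circ$ be its parametrisation, $z_\l$ the parametrisation of the approximating curve $\{g+\l h=0\}^\circ$, both with the same initial point $x_0$ (recall \eqref{hx0}). Since $y_g$ vanishes on $\partial\O_g$ and $y_{g+\l h}$ vanishes on $\partial\O_{g+\l h}$, for $x=z(t)\in\G_2$ we have $y_g(z(t))=0$, so that
\begin{equation*}
q_\l(z(t))=\frac{y_{g+\l h}(z(t))-y_g(z(t))}{\l}=\frac{y_{g+\l h}(z(t))-y_{g+\l h}(z_\l(t))}{\l},
\end{equation*}
using $y_{g+\l h}(z_\l(t))=0$. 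The idea is then to write this as a difference quotient of the $\CC^{1}$ (indeed $\CC^{1,\alpha}$) function $y_{g+\l h}$ along the segment from $z_\l(t)$ to $z(t)$, whose length is $\l|w_\l(t)|$ with $w_\l=(z_\l-z)/\l\to w$ in $\CC^1([0,T];\R^2)$ by Proposition \ref{prop:diff}. A mean value / fundamental theorem of calculus argument gives
\begin{equation*}
q_\l(z(t))=-\int_0^1 \nabla y_{g+\l h}\big(z_\l(t)+s(z(t)-z_\l(t))\big)\cdot w_\l(t)\,ds,
\end{equation*}
and the target value is $q(z(t))=-\nabla y_g(z(t))\cdot W(z(t))=-\nabla y_g(z(t))\cdot w(t)$ by \eqref{q0} and \eqref{eq:wa}.

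The convergence $q_\l\to q$ uniformly on $\G_2$ then follows by combining three ingredients, each of which is already available: (a) the uniform $\CC^{1,\alpha}$ bound \eqref{w2p_bound} on $y_{g+\l h}$ together with the convergence $\nabla y_{g+\l h}\to\nabla y_g$ in $\CC^{0,\alpha}(\G_2)$ from \eqref{conv_g2} in Proposition \ref{lem:conv_yl}, which controls the gradient factor; (b) the convergence $z_\l\to z$ in $\CC^1([0,T];\R^2)$ and $T_\l\to T$ from Proposition \ref{lem:vec}, so that the evaluation points $z_\l(t)+s(z(t)-z_\l(t))$ converge uniformly in $s,t$ to $z(t)$ and stay in a fixed compact neighbourhood of $\G_2$ contained in $\bar\O_{g+\l h}$ for $\l$ small (here one uses that $\G_2$ lies in the closure of $\O_{g+\l h}$, or more carefully that the segment stays within the domain of the $\CC^{1,\alpha}$ extension of $y_{g+\l h}$); and (c) the convergence $w_\l\to w$ in $\CC^1([0,T];\R^2)$ from Proposition \ref{prop:diff}. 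Subtracting and adding $\nabla y_g(z(t))\cdot w(t)$ inside the integral and splitting into the three corresponding error terms, each term tends to $0$ uniformly in $t$, using the Hölder continuity to estimate $|\nabla y_{g+\l h}(z_\l(t)+s(\cdot))-\nabla y_{g+\l h}(z(t))|\le c|z_\l(t)-z(t)|^\alpha(1-s)^\alpha$ and \eqref{conv_g2} for $|\nabla y_{g+\l h}(z(t))-\nabla y_g(z(t))|$. Since $z$ (resp.\ $z_\l$) is a bijection onto $\{g=0\}^\circ$ (resp.\ its approximating curve) and $\G_2$ is a union of finitely many arcs of such components (finiteness by Proposition \ref{prop:ham} applied to $h\in\FF$), one takes the maximum over the finitely many components to conclude $\|q_\l-q\|_{\CC(\G_2)}\to 0$.

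I expect the main obstacle to be the bookkeeping at the endpoints of the arcs comprising $\G_2$, i.e.\ the points of $\{g=h=0\}$ where $\partial\O_g$ passes from $\G_1$ to $\G_2$: one must make sure the segment $[z_\l(t),z(t)]$ used above does not leave the region on which the uniform $\CC^{1,\alpha}$ estimate for $y_{g+\l h}$ holds, and that the parametrisations $z,z_\l$ cover $\G_2$ compatibly (this is where \eqref{hx0}, i.e.\ the common initial point $x_0$, is crucial, since it pins the two trajectories together and lets one compare $z(t)$ with $z_\l(t)$ at the \emph{same} time $t$ rather than via an ill-behaved projection — cf.\ the remark after \eqref{hx0}). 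A clean way around this is to note that $\G_2\subset\bar\O_{g+\l h}$ for all small $\l$ by construction, and that by the uniform $\CC^{1,\alpha}(\bar\O_{g+\l h})$ bound the functions $y_{g+\l h}$ extend (via e.g.\ a fixed-width collar, using the uniform local $\CC^2$ charts $\Psi_x$ from the earlier lemma) to a fixed open neighbourhood of $\bar\O_g$ with uniformly bounded $\CC^{1,\alpha}$ norm; then the segments stay in this neighbourhood for $\l$ small, and the argument goes through verbatim. The rest is routine estimation using the three convergences (a)–(c) above.
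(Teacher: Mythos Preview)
Your proposal is correct and follows the same overall route as the paper: parametrise $\G_2$ by $z$, use $y_g(z(t))=0=y_{g+\l h}(z_\l(t))$ to rewrite $q_\l(z(t))=\l^{-1}\big(y_{g+\l h}(z(t))-y_{g+\l h}(z_\l(t))\big)$, linearise to extract $-\nabla y_{g+\l h}(z(t))\cdot w_\l(t)$, and then pass to the limit using \eqref{conv_g2} together with $w_\l\to w$ from Proposition~\ref{prop:diff}. The one technical divergence is precisely the point you flag as the main obstacle: the paper does \emph{not} use the straight-segment fundamental theorem of calculus (and hence does not need the segment $[z_\l(t),z(t)]$ to stay in $\bar\O_{g+\l h}$ or any $\CC^{1,\alpha}$ extension). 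Instead it appeals to an intrinsic mean-value inequality for $\CC^1$ functions on \emph{uniformly locally quasiconvex} domains, after observing that the $\O_{g+\l h}$'s are such with constants independent of $\l$ (because $|\nabla(g+\l h)|$ is uniformly bounded). This yields directly
\[
\max_{t\in I}\Big|\frac{y_{g+\l h}(z(t))-y_{g+\l h}(z_\l(t))}{\l}+\nabla y_{g+\l h}(z(t))\,w_\l(t)\Big|\le \varepsilon\,\max_{t\in I}|w_\l(t)|
\]
for $\l$ small, bypassing any extension. Your extension workaround is also valid (a uniform $\CC^{1,\alpha}$ extension is available via the uniform boundary charts already established in the proof of \eqref{w2p_bound}), and has the advantage of being self-contained; the paper's route trades this for a black-box lemma but avoids the extension bookkeeping entirely.
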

 \begin{proof}   Throughout this proof $\l>0$ is small enough, so that the unicity of the approximating curve is guaranteed, see Proposition \ref{lem:unique}. Since $h\in \FF$, by assumption, we have that $\{h=0\}$ is a finite union of disjoint closed $\CC^2$ curves (see Proposition \ref{prop:ham}). Thus, $\G_2$ is a finite union of {relatively closed} curves and it suffices to prove the assertion for one of this curves, which we call $\G_2$ as well (with a little abuse of notation). If $\G_2$ is such that $\G_2 \subset \partial \O_g \cap \partial \O_{g+\l h}$, then $y_{g+\l h}=0=y_g$ on $\G_2$ and $W=0$ on  $\G_2$ as well (Definition \ref{def:w} and Proposition \ref{prop:diff}), whence $\frac{y_{g+\l h}-y_g}{\l}=0=q$ on $\G_2$ follows.  
 Otherwise, take $x_0$ to be the initial point of $\G_2$, i.e., $h(x_0)=g(x_0)=0$. Consider the Hamiltonian systems \eqref{ham} and \eqref{ham_e} with this initial condition and associated solutions $z$ and $z_\l$, respectively; then $\G_2$ is completely described by $z:I\to \{g=0\}$, where $I=[0,\iota]$, for some $\iota>0$ and $z(\iota)\in \{h=0\} \cap \{g=0\}$ (this does not exclude the case $z(\iota)=x_0$). Thus, to prove the desired assertion, it suffices to show that
 \[(q_\l-q)(z(\cdot)) \to 0 \text{ uniformly on } I, \quad \text{as }\l \searrow 0.\]

 To obtain a first convergence result, we make use of  \cite[Lem.\,5.5]{dw_gw}. We observe that $\O_{g+\l h}$ is uniformly locally quasiconvex with constants independent of $\l$ \cite[Def.\,5.1]{dw_gw}; this is due to Lemma \ref{lem:glh} and since $|\nabla(g+\l h)|$ is uniformly bounded w.r.t.\,$\l$, for $\l>0$ small. In addition, thanks to \eqref{w2p_bound}, we can choose the constant $\delta>0$ in \cite[Lem.\,5.5]{dw_gw} independent of $\l$ as well (see its proof). Hence, applying  \cite[Lem.\,5.5]{dw_gw} for $y_{g+\l h} \in \CC^1(\O_{g+\l h})$ (where we use 
 Proposition \ref{lem:vec}.(i) and the fact that $z(t)\in \O_{g+\l h}$, for $t \in I$) yields : $\forall \eps>0 $ there exists $\l(\eps)>0$ so that
 \[\max_{t \in I}\underbrace{\Big|\frac{y_{g+\l h}(z(t))-{y_{g+\l h}(z_\l(t))}}{\l}+ \nabla y_{g+\l h}(z(t))w_\l(t)\Big|}_{=:a_\l(t)}\leq \eps \max_{t \in I}|w_\l(t)| \ \,  \forall \l \in (0,\l(\eps)).\]
 Thus, as $\|w_\l\|_{\CC[0,T]}$ is uniformly bounded (see \eqref{wa}), we deduce \begin{equation}\label{al}a_\l \to 0 \text{ uniformly on }I.\end{equation} Since $y_g(z(t))=y_{g+\l h}(z_\l(t))(=0),\ t \in \R$, we have 
 \begin{align*}&\max_{t \in I}\Big|\frac{{y_{g+\l h}(z(t))}-y_g(z(t))}{\l}-q(z(t))\Big|
 \\&=\max_{t \in I}\Big|\frac{{y_{g+\l h}(z(t))}-y_{g+\l h}(z_\l(t))}{\l}\pm \nabla y_{g+\l h}(z(t))w_\l(t)\pm \nabla y_{g}(z(t))w_\l(t)-q(z(t))\Big|
 \\&\quad \leq  \max_{t \in I}a_\l(t)+\|\nabla y_{g+\l h}(z(\cdot))-\nabla y_g(z(\cdot))\|_{\CC(I)}\|w_\l\|_{\CC[0,T]}\\&\qquad + \|\nabla y_g(z(\cdot))\|_{\CC[0,T]}\|w_\l- w\|_{\CC[0,T]}, 
\end{align*} where we also employed \eqref{q0} and \eqref{eq:wa}.
 Due to \eqref{al},  \eqref{wa} and \eqref{conv_g2} we now arrive at the desired convergence.
 \end{proof}

 \begin{lemma}[Behaviour of the derivative on the varying part of the boundary of $\omega_\l$]\label{lem:h1}Suppose that $g, h \in \FF$ and \eqref{hx00} is true. We have 
 \[\|q_{\l}-q\|_{\CC(\partial \O_{g+\l h} \cap \bar  \O_g)} \to 0\quad \text{as }\l \searrow 0.\]   \end{lemma}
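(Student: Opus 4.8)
The plan is to parametrize the varying piece $\partial\O_{g+\l h}\cap\bar\O_g$ by the Hamiltonian trajectory $z_\l$ associated to $\{g+\l h=0\}^\circ$ and to transport the estimate back onto the fixed trajectory $z$ of $\{g=0\}^\circ$, exactly in the spirit of Lemma \ref{lem:h0}. Concretely, for a point $x=z_\l(t)$ lying in $\bar\O_g$ (so $g(x)\le 0$), I want to bound
\[
\Big|\frac{y_{g+\l h}(z_\l(t))-y_g(z_\l(t))}{\l}-q(z_\l(t))\Big|
=\Big|\frac{-y_g(z_\l(t))}{\l}-q(z_\l(t))\Big|,
\]
since $y_{g+\l h}(z_\l(t))=0$ on the zero level set. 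The idea is then to Taylor-expand $y_g$ around $z(t)$: writing $z_\l(t)=z(t)+\l w_\l(t)$ with $w_\l\to w$ in $C^1$ by Proposition \ref{prop:diff}, and using that $y_g\in W^{2,p}(\O_g)\embed \CC^{1,\alpha}(\bar\O_g)$, one gets
\[
\frac{y_g(z_\l(t))}{\l}=\frac{y_g(z(t))}{\l}+\nabla y_g(z(t))w_\l(t)+o(1)
=\nabla y_g(z(t))w_\l(t)+o(1),
\]
uniformly in $t$, because $y_g(z(t))=0$ on $\partial\O_g$. Combined with $q+\nabla y_g W=0$ on $\partial\O_g$ (the boundary condition \eqref{q0}), the crucial identity $W(z(t))=w(t)$ from \eqref{eq:wa}, the convergence $w_\l\to w$ in $\CC[0,T]$, and the continuity of $q$ on $\bar\O_g$ (so that $q(z_\l(t))\to q(z(t))$ since $z_\l\to z$ uniformly), all terms cancel in the limit.

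**Key steps in order.**
First I would reduce to a single relatively closed curve of $\partial\O_{g+\l h}\cap\bar\O_g$, exploiting that $\{h=0\}$ is a finite union of $\CC^2$ curves (Proposition \ref{prop:ham}, since $h\in\FF$), and handle the trivial piece where the curve lies in $\partial\O_g\cap\partial\O_{g+\l h}$ (there $q_\l=0=q$ as in Lemma \ref{lem:h0}). Second, on the nontrivial piece, choose the common initial point $x_0$ with $g(x_0)=h(x_0)=0$ and parametrize by $z_\l$ on an interval $I_\l$; by Proposition \ref{lem:vec}.(i)–(ii) the endpoints of $I_\l$ converge and $z_\l\to z$ in $\CC^1$ on any fixed $[0,B]$. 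Third, write the Taylor remainder for $y_g$ along $z_\l(t)=z(t)+\l w_\l(t)$ and estimate it: here the $\CC^{1,\alpha}$ bound \eqref{w2p_bound} on $y_g$ (applied with $\l=0$) and the uniform bound on $\|w_\l\|_{\CC[0,T]}$ from \eqref{wa} give a remainder of order $\l^{\alpha}\|w_\l\|^{1+\alpha}=o(1)$. Fourth, substitute the boundary condition \eqref{q0}, use \eqref{eq:wa} and $w_\l\to w$ in $\CC[0,T]$, plus uniform continuity of $q$ on $\bar\O_g$ together with $|z_\l(t)-z(t)|\le \l\|w_\l\|_{\CC}\to 0$, to conclude the uniform convergence to zero.

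**Main obstacle.**
The delicate point is the Taylor expansion of $y_g$ along the \emph{perturbed} curve $z_\l$: one needs that $z_\l(t)\in\bar\O_g$ for the relevant $t$ (so that $y_g$ is evaluated inside its domain of definition and the $\CC^{1,\alpha}$ estimate applies), and one must control the remainder uniformly. Since $y_g\notin \CC^2(\bar\O_g)$ in general (only $\CC^{1,\alpha}$), a naive second-order Taylor expansion is unavailable; instead I would write $y_g(z_\l(t))-y_g(z(t))=\int_0^1\nabla y_g\big(z(t)+s\l w_\l(t)\big)\cdot \l w_\l(t)\,ds$ and use the Hölder modulus of continuity of $\nabla y_g$ to replace the integrand by $\nabla y_g(z(t))$ up to an $O(\l^{1+\alpha})$ error. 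A secondary subtlety is that the parametrization interval $I_\l$ itself moves with $\l$; this is handled by the uniform $\CC^1$ convergence $z_\l\to z$ and $T_\l\to T$ from Proposition \ref{lem:vec}, which lets one pass all suprema over $I_\l$ to suprema over a fixed compact interval with vanishing error.
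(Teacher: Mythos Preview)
Your proposal is correct and follows essentially the same route as the paper: reduce to a single relatively closed curve, handle the trivial piece on $\partial\O_g\cap\partial\O_{g+\l h}$, parametrize by $z_\l$ on $I_\l$ with common initial point $x_0$, expand $y_g(z_\l(t))=y_g(z(t))+\l\nabla y_g(z(t))w_\l(t)+o(\l)$ uniformly in $t$, then invoke \eqref{q0}, \eqref{eq:wa}, $w_\l\to w$ in $\CC[0,T]$, and the H\"older continuity of $q$ on $\bar\O_g$. The only cosmetic difference is that the paper obtains the Taylor-type term (its $b_\l$) by quoting the uniform-differentiability lemma \cite[Lem.\,5.5]{dw\_gw} on quasiconvex domains (exactly as in the proof of Lemma~\ref{lem:h0}), whereas you write the integral remainder and use the $\CC^{0,\alpha}$ modulus of $\nabla y_g$ directly; your variant requires the segment $z(t)+s\l w_\l(t)$ to stay in $\bar\O_g$, which is not automatic for curved boundaries, so either extend $y_g$ to a $\CC^{1,\alpha}$ function on a neighbourhood of $\bar\O_g$ or fall back on the quasiconvexity argument the paper uses.
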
  \begin{proof} Again, fix $\l>0$ small so that the unicity of the approximating curve is guaranteed, see Proposition \ref{lem:unique}. By the same arguments as in the proof of Lemma \ref{lem:h0} we have that  $\partial \O_{g+\l h} \cap \bar \O_g$ is a finite union of {(relatively closed)} curves and it suffices to prove the assertion for one of this curves, which we call $\G_\l$ in what follows (with a little abuse of notation). The case $\G_\l \subset \partial \O_g \cap \partial \O_{g+\l h}$ can be tackled as in the proof of Lemma \ref{lem:h0}. If $\G_\l \subset \O_g$, take $x_0$ to be the initial point of $\G_\l$, i.e., $h(x_0)=g(x_0)=0$. Consider the Hamiltonian systems \eqref{ham} and \eqref{ham_e} with this initial condition and associated solutions $z$ and $z_\l$, respectively; then $\G_\l$ is completely described by $z_\l:I_\l \to \{g+\l h=0\}$, where $I_\l=[0,\iota_\l]$, for some $\iota_\l>0$ and $z_\l(\iota_\l)\in \{h=0\} \cap \{g=0\}$. 
 Our goal is to show that
 \[\max_{t \in I_\l}|q_\l-q|\to 0\quad \text{as }\l \searrow 0.\]Arguing in the exact same way as in the proof of Lemma \ref{lem:h0} we obtain
 \begin{equation}\label{bl}\max_{t \in I_\l}\underbrace{\Big| \frac{ {y_g(z(t))}-y_g(z_\l(t))}{\l}+ \nabla y_g (z(t))w_\l(t)\Big | }_{=:b_\l(t)}\to 0.\end{equation}
 Further, we have for each  $t \in I_\l$ 
\begin{align*}\frac{\overbrace{y_{g+\l h}(z_\l(t))}^{=0}-y_g(z_\l(t))}{\l}-q(z_\l(t))&=\frac{\overbrace{y_g(z(t))}^{=0}-y_g(z_\l(t))}{\l}
\\&\quad \pm \nabla y_g (z(t))w_\l(t)\pm (\nabla y_g W)(z(t))-q(z_\l(t)).
\end{align*}
Thanks to  \eqref{eq:wa} and \eqref{q0}, this results in 
\begin{align*}&\max_{t \in I_\l} \Big|\frac{{y_{g+\l h}(z_\l(t))}-y_g(z_\l(t))}{\l}-q(z_\l(t))\Big|
\\&\leq \max_{t \in I_\l} b_\l(t)+ \|\nabla y_g(z(\cdot))\|_{\CC[0,T]}\|w_\l- w\|_{\CC[0,T]}\\&\quad +\widehat c \max_{t \in I_\l} |z(t)- z_\l(t)|^{  \alpha},\end{align*}
where   we used the embedding $  W^{1,p}(\O_g)\embed \CC^{0,\alpha}(\bar \O_g)$, $\alpha>0$ and the fact that $q \in W^{1,p}(\O_g)$. In view of \eqref{bl} and Propositions \ref{lem:vec} and \ref{prop:diff}, the proof is now complete.
\end{proof}

Summarizing, from Lemmas \ref{lem:h0}, \ref{lem:h1} and \eqref{b_ol}, we have 
\begin{proposition}\label{prop:ml}Suppose that $g, h \in \FF$ and \eqref{hx00} is true. It holds $$ \|q_\l-q\|_{\CC(\partial \omega_\l)} \to 0\quad \text{as }\l \searrow 0.$$ 
\end{proposition}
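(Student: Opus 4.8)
The plan is simply to patch together the two boundary estimates already proved, using the decomposition of $\partial \omega_\l$ recorded in \eqref{b_ol}. Recall that $m_\l = \|q_\l - q\|_{\CC(\partial \omega_\l)}$, so this proposition is precisely the statement $m_\l \to 0$ that Proposition \ref{prop:lr} needs as its hypothesis; hence it closes the loop of the difference-quotient analysis.

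First I would invoke \eqref{b_ol}, which gives $\partial \omega_\l = \G_2 \cup (\partial \O_{g+\l h} \cap \O_g)$. Since $\partial \O_{g+\l h} \cap \O_g \subseteq \partial \O_{g+\l h} \cap \bar \O_g$, this yields the inclusion $\partial \omega_\l \subseteq \G_2 \cup (\partial \O_{g+\l h} \cap \bar \O_g)$, and therefore
\[
 \|q_\l - q\|_{\CC(\partial \omega_\l)} \;\le\; \max\Big\{\,\|q_\l - q\|_{\CC(\G_2)}\,,\; \|q_\l - q\|_{\CC(\partial \O_{g+\l h} \cap \bar \O_g)}\,\Big\}.
\]
Then I would apply Lemma \ref{lem:h0} to the first term on the right-hand side and Lemma \ref{lem:h1} to the second; both tend to $0$ as $\l \searrow 0$, so their maximum does as well, which is the asserted convergence. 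It is worth noting that Lemmas \ref{lem:h0} and \ref{lem:h1} already require only the standing hypotheses $g,h \in \FF$ and \eqref{hx00}, so no additional assumptions enter at this stage.

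There is essentially no obstacle in this final step: the genuine work is done in Lemmas \ref{lem:h0} and \ref{lem:h1}, where the difference quotient $\tfrac{y_{g+\l h}-y_g}{\l}-q$ is controlled on the fixed part $\G_2$ of $\partial \omega_\l$ and on the moving part $\partial \O_{g+\l h} \cap \bar \O_g$ via the Hamiltonian trajectories $z$, $z_\l$, the convergences of Propositions \ref{lem:vec} and \ref{prop:diff}, the quasiconvexity estimate \cite[Lem.\,5.5]{dw_gw}, the uniform $\CC^{1,\alpha}$ bound \eqref{w2p_bound}, and the boundary identity \eqref{q0} combined with \eqref{eq:wa}. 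The only mild point to be careful about is that the two pieces of $\partial \omega_\l$ overlap exactly on $\{g=h=0\} = \partial \O_g \cap \partial \O_{g+\l h}$, where both lemmas return the value $0$ (since there $y_{g+\l h}=y_g=0$ and $W=0$); hence there is no inconsistency in bounding the $\CC(\partial \omega_\l)$-norm by the maximum over a cover rather than over a genuinely disjoint decomposition.
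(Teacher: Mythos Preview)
Your proposal is correct and matches the paper's own proof, which simply states that the result follows from Lemmas \ref{lem:h0}, \ref{lem:h1} and the decomposition \eqref{b_ol}. Your additional remarks about the inclusion $\partial \O_{g+\l h}\cap \O_g \subset \partial \O_{g+\l h}\cap \bar \O_g$ and the overlap on $\{g=h=0\}$ are accurate but not needed for the argument.
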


We are now in the position to state a first differentiability result for $\SS $. This will 
allow us to prove the differentiability of the objective in the presence of derivatives of the state as well as pointwise observation on a prescribed subdomain, see  {subsection \ref{sec:gobs}}.

  \begin{theorem}\label{thm:h1}
 Assume that $g, h \in \FF$ and \eqref{hx00} is true.  Then 
\[\lim_{\l \searrow 0}\Big\|\frac{\SS(g+\l h)-\SS(g)}{\l}-q\Big\|_{H^2(\o)}\to 0,\]where $q \in W^{1,p}(\O_g)$ is the unique solution of \eqref{q}-\eqref{q0} and $\o \subset \subset \O_g$ is a compact  subset. 
\end{theorem}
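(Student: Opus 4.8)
The plan is to combine the three ingredients that have just been assembled: Proposition \ref{prop:lr} (which reduces convergence in $H^2(\o)$ to the boundary quantity $m_\l\to 0$), Proposition \ref{prop:ml} (which supplies exactly $m_\l=\|q_\l-q\|_{\CC(\partial\omega_\l)}\to 0$), and the elementary inclusion $\o\subset\subset\O_g\Rightarrow\o\subset\subset\omega_\l$ for $\l$ small, established in the proof of Lemma \ref{lem:l}. Since $\SS(g+\l h)=y_{g+\l h}$, $\SS(g)=y_g$, and $q_\l=\frac{y_{g+\l h}-y_g}{\l}$, the quantity to be estimated is precisely $\|q_\l-q\|_{H^2(\o)}$, so the theorem is a direct corollary once the hypotheses of Proposition \ref{prop:lr} are checked.

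Concretely, first I would invoke Proposition \ref{prop:ml} to get $m_\l\to 0$ as $\l\searrow 0$. This verifies the standing hypothesis $m_\l\to 0$ required in Proposition \ref{prop:lr}. Second, I would fix the compact set $\o\subset\subset\O_g$ from the statement; by the argument in the proof of Lemma \ref{lem:l} there is $\l_1>0$ with $\o\subset\subset\O_{g+\l h}$ for all $\l\in(0,\l_1)$, hence $\o\subset\subset\omega_\l$ as needed in \eqref{h1}. Third, I would apply the second assertion \eqref{h1} of Proposition \ref{prop:lr} to this $\o$, which gives $\|q_\l-q\|_{H^2(\o)}\to 0$. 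Rewriting $q_\l=\frac{\SS(g+\l h)-\SS(g)}{\l}$ yields exactly the claimed limit. One small bookkeeping point I would make explicit: $q$ is only defined on $\O_g$ as an element of $W^{1,p}(\O_g)\embed\CC^{0,\alpha}(\bar\O_g)$, but since $\o\subset\subset\O_g$ its restriction to $\o$ lies in $H^2(\o)$ — indeed this is part of what \eqref{h1} asserts via the interior regularity estimate for \eqref{ol} — so the $H^2(\o)$ norm in the statement is meaningful.

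There is essentially no obstacle left: all the analytic work (the Moser-type iteration hidden in Lemma \ref{lem:lr}, the interior $H^2$ estimate with $\l$-uniform constant, and the delicate boundary analysis on $\G_2$ and on $\partial\O_{g+\l h}\cap\bar\O_g$ via the Hamiltonian systems and the vector field $W$) has already been carried out in Lemmas \ref{lem:lr}--\ref{lem:h1} and Propositions \ref{prop:lr}--\ref{prop:ml}. The only thing to be careful about is that $\o$ is an \emph{arbitrary} compact subset and all constants in Proposition \ref{prop:lr} were shown there to be independent of $\l$ (using $d_\HH(\{g+\l h=0\},\{g=0\})\to 0$ to bound $1/\dist(\bar\o,\partial\omega_\l)$), so no uniformity issue arises. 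Thus the proof is a two-line assembly.

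\begin{proof}
By Proposition \ref{prop:ml} we have $m_\l=\|q_\l-q\|_{\CC(\partial\omega_\l)}\to 0$ as $\l\searrow 0$, where $q_\l=\frac{y_{g+\l h}-y_g}{\l}$. Let $\o\subset\subset\O_g$ be a compact subset. As in the proof of Lemma \ref{lem:l}, $\o\subset\subset\O_{g+\l h}$ for all sufficiently small $\l$, hence $\o\subset\subset\omega_\l=\O_g\cap\O_{g+\l h}$. Proposition \ref{prop:lr} (the convergence \eqref{h1}) then gives $\|q_\l-q\|_{H^2(\o)}\to 0$ as $\l\searrow 0$. Since $\SS(g+\l h)=y_{g+\l h}$ and $\SS(g)=y_g$, this is precisely
\[
\Big\|\frac{\SS(g+\l h)-\SS(g)}{\l}-q\Big\|_{H^2(\o)}\to 0\quad\text{as }\l\searrow 0,
\]
which completes the proof.
\end{proof}
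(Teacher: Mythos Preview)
Your proposal is correct and follows exactly the same route as the paper, which simply says ``The assertion is due to Propositions \ref{prop:lr} and \ref{prop:ml}.'' Your additional remarks about the inclusion $\o\subset\subset\omega_\l$ and the meaning of the $H^2(\o)$ norm are already absorbed into the proof of Proposition \ref{prop:lr}, so they add clarity but no new content.
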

\begin{proof}The assertion is due to  Propositions \ref{prop:lr} and \ref{prop:ml}. \end{proof}

To state  a differentiability result for $\SS$ that permits us to have distributed observation in the unknown domain  in the objective as well (see subsection \ref{sec:do}), we will also need to take a look at the convergence behaviour of  $q_\l -q$ outside $\o_\l$ (e.g.\, on $\O_{g+\l h} \setminus \O_g$ and on $\O_g \setminus \O_{g+\l h}$).  
  
  \begin{proposition}\label{prop:outside}
  Suppose that $g, h \in \FF$ and \eqref{hx00} is true. We have 
  \[\|q_\l-q\|_{L^r(\O_g \setminus \O_{g+\l h})}\to 0, \quad \forall\,r\in [1,\infty),\]
  \[\|q_\l-q\|_{L^r(\O_{g+\l h} \setminus \O_g)}\to 0\quad \forall\,r\in [1,\infty),\]where we recall $q_\l=(y_{g+\l h} -y_g)/\l$ and that $y_{g+\l h}$ is extended by zero outside $\O_{g+\l h}$, while $y_g$ and $q$ are extended by zero outside $\O_g$.
\end{proposition}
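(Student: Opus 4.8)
The goal is to control $q_\l - q$ on the two ``thin'' regions $\O_g \setminus \O_{g+\l h}$ and $\O_{g+\l h} \setminus \O_g$, whose measures tend to $0$ by \eqref{mu} and \eqref{h_d_c}. On both sets one of the two functions $q_\l$, $q$ is either the (extension by zero of the) actual object or a genuine solution, so the plan is to bound $q_\l$ and $q$ \emph{separately} in an $L^\infty$-type norm uniformly in $\l$, and then exploit that the sets shrink. First I would note that $q$ is continuous on $\bar\O_g$ (since $q \in W^{1,p}(\O_g) \embed \CC^{0,\alpha}(\bar\O_g)$), hence bounded, and that $q = 0$ outside $\O_g$ by Remark \ref{rem:q}; so $\|q\|_{L^\infty(D)} \le c$ independently of $\l$. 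The term $q_\l = (y_{g+\l h} - y_g)/\l$ is the delicate one: it is \emph{not} obviously bounded in $L^\infty$ uniformly in $\l$ because it is a difference quotient, so a different idea is needed on the thin strips.

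\textbf{The set $\O_g \setminus \O_{g+\l h}$.} Here $y_{g+\l h}$ has been extended by zero, so $q_\l = -y_g/\l$ on this set. The key observation is that $\O_g \setminus \O_{g+\l h}$ is contained in a thin tubular neighbourhood of $\partial\O_g$ of width $O(\l)$: indeed, the points of $\O_g \setminus \O_{g+\l h}$ satisfy $g(x) < 0 \le (g+\l h)(x)$, so $|g(x)| \le \l |h(x)| \le \l\|h\|_{\CC(\bar D)}$, and since $|\nabla g|$ is bounded below on $\partial\O_g$ (Weierstrass, as in Lemma \ref{lem:W}), the distance of such $x$ to $\partial\O_g = \{g=0\}$ is $O(\l)$. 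Because $y_g$ vanishes on $\partial\O_g$ and $y_g \in \CC^1(\bar\O_g)$ with $\|\nabla y_g\|_\infty \le c$ (by \eqref{w2p_bound}, or the regularity of $y_g$), we get $|y_g(x)| \le c\,\dist(x,\partial\O_g) \le c\l$ pointwise on $\O_g \setminus \O_{g+\l h}$, hence $|q_\l| = |y_g|/\l \le c$ uniformly in $\l$ on this set. Combining with $\|q\|_{L^\infty} \le c$ and $\mu(\O_g \setminus \O_{g+\l h}) \to 0$ (which follows from \eqref{mu} and $d_\HH$-convergence), we obtain $\|q_\l - q\|_{L^r(\O_g \setminus \O_{g+\l h})} \le c\,\mu(\O_g \setminus \O_{g+\l h})^{1/r} \to 0$ for every $r \in [1,\infty)$.

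\textbf{The set $\O_{g+\l h} \setminus \O_g$.} Here $y_g$ has been extended by zero, so $q_\l = y_{g+\l h}/\l$ on this set, and again $q = 0$ there (as it lies outside $\O_g$). Symmetrically, points $x$ of this set satisfy $(g+\l h)(x) < 0 \le g(x)$, hence $0 \le g(x) \le \l\|h\|_{\CC(\bar D)}$, and, using that $|\nabla g|$ is bounded below near $\partial\O_g$, such $x$ lie within distance $O(\l)$ of $\partial\O_g$; equivalently, they lie within $O(\l)$ of $\partial\O_{g+\l h} = \{g+\l h = 0\}$ as well (the two boundaries are $d_\HH$-close by \eqref{h_d_c}, and here the bound $\dist(x,\partial\O_{g+\l h}) = O(\l)$ follows from $|(g+\l h)(x)| = |g(x) - (g(x) - (g+\l h)(x))|$ being $O(\l)$ together with the uniform lower bound on $|\nabla(g+\l h)|$ from Lemma \ref{lem:glh} and \eqref{gd}). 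Since $y_{g+\l h}$ vanishes on $\partial\O_{g+\l h}$ and $\|y_{g+\l h}\|_{\CC^{1,\alpha}(\bar\O_{g+\l h})} \le c$ uniformly in $\l$ by \eqref{w2p_bound}, this gives $|y_{g+\l h}(x)| \le c\,\dist(x,\partial\O_{g+\l h}) \le c\l$, hence $|q_\l| = |y_{g+\l h}|/\l \le c$ uniformly on $\O_{g+\l h} \setminus \O_g$. Together with $\mu(\O_{g+\l h} \setminus \O_g) \to 0$ (again from \eqref{mu} / $d_\HH$-convergence) this yields $\|q_\l - q\|_{L^r(\O_{g+\l h} \setminus \O_g)} = \|q_\l\|_{L^r(\O_{g+\l h}\setminus\O_g)} \le c\,\mu(\O_{g+\l h}\setminus\O_g)^{1/r} \to 0$.

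\textbf{Main obstacle.} The only nontrivial point is the uniform-in-$\l$ pointwise bound on the difference quotient $q_\l$ on the thin strips; everything else is measure-shrinking plus the already-established uniform $\CC^{1,\alpha}$ bound \eqref{w2p_bound}. The reason the bound works is structural: on each strip one of $y_g$, $y_{g+\l h}$ is identically zero by the extension convention, the other vanishes on a curve ($\partial\O_g$ or $\partial\O_{g+\l h}$) that passes through the strip, and the strip has width proportional to $\l$ because $|h|$ is bounded while $|\nabla g|$ (equivalently $|\nabla(g+\l h)|$ for small $\l$) is bounded below on a neighbourhood of the boundary. I would make the ``width $= O(\l)$'' claim precise by the mean-value/Taylor argument sketched above — possibly invoking the local graph representation of $\partial\O_g$ from \cite[proof of Thm.\,4.2]{dz} used earlier — but no new analytic input beyond Lemmas \ref{lem:glh}, \ref{lem:W} and the estimate \eqref{w2p_bound} is required.
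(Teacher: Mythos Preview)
Your proposal is correct and follows essentially the same route as the paper: on each strip one of $y_g,\,y_{g+\l h}$ vanishes identically by the extension convention, the other is uniformly Lipschitz (by $y_g\in W^{2,p}(\O_g)$, resp.\ \eqref{w2p_bound}) and vanishes on the nearby boundary, and the strip has width $O(\l)$, yielding $\|q_\l\|_{L^\infty(\text{strip})}\le c$; the conclusion then follows from $\mu(\text{strip})\to 0$. The only structural difference is that the paper first packages these pointwise bounds into a global estimate $\|q_\l\|_{L^\varrho(D)}\le c$ (invoking Proposition~\ref{prop:ml} to control $q_\l$ on $\o_\l$) and then applies H\"older on the strips, whereas you stay on the strips throughout and use the $L^\infty$ bound directly --- your version is slightly more self-contained, while the paper's global $L^\varrho(D)$ bound \eqref{ql_bound} is reused later in the proof of Proposition~\ref{thm:2}.
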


\begin{proof}Let $r \in [1,\infty)$ be arbitrary but fixed. Take $\l>0$ small enough and fix $\varrho \in (r,\infty)$. We first show that
 \begin{equation}\label{ql_bound}
\|q_\l\|_{L^\varrho(D)} \leq c, \end{equation}where $c>0$ is independent of $\l$.
 \\(i) If $x\in \O_{g+\l h} \setminus  \O_g$, then $q_\l(x)=\frac{y_{g+\l h}(x)}{\l}$. By  \eqref{w2p_bound}, one has  \[|y_{g+\l h}(x)-\underbrace{y_{g+\l h}(\widehat x)}_{=0}|\leq L |x-\widehat x|\leq L\l,\]where $\widehat x \in \partial \O_{g+\l h}$ and  $L>0$ is independent of $\l$. Hence, it holds $\|q_\l\|_{L^\infty(\O_{g+\l h} \setminus  \O_g)}\leq L.$
 \\(ii) If $x\in \O_g \setminus \O_{g+\l h}$, we argue as above by taking into account  that  $q_\l(x)=\frac{-y_g(x)}{\l}$ and $|y_g(x)-y_g(\widehat x)|\leq L \l,$ where $\widehat x \in \partial \O_g$; we obtain $\|q_\l\|_{L^\infty(\O_g \setminus  \O_{g+\l h})}\leq L.$
\\(iii) If $x\not \in \O_{g+\l h} \cup \O_g$, then $q_\l(x)=0.$ Due to Proposition \ref{prop:ml}, we also have that $\|q_\l\|_{L^\varrho(\o_\l)}$ is uniformly bounded. From all the above, we arrive at 
  \eqref{ql_bound}. Further, by \eqref{mu}, one has 
  \[\|q_\l-q\|_{L^r(\O_g \setminus \O_{g+\l h})}\leq \mu(\O_g \setminus \O_{g+\l h})^{r\varrho/\varrho-r}\|q_\l-q\|_{L^\varrho(D)}\to 0,\]
  \[\|q_\l-q\|_{L^r(\O_{g+\l h} \setminus \O_g)}\leq \mu(\O_{g+\l h} \setminus \O_g)^{r\varrho/\varrho-r}\|q_\l-q\|_{L^\varrho(D)}\to 0,\]
and the desired assertion is now proven. \end{proof}

The next theorem contains the second main differentiability result for $\SS$.

\begin{theorem}\label{thm:lr}
Assume that $g, h \in \FF$ and \eqref{hx00} is true.  Then 
\[\lim_{\l \searrow 0}\Big\|\frac{\SS(g+\l h)-\SS(g)}{\l}-q\Big\|_{L^r(\O_g)}\to 0,\quad \forall\,r \in [1,\infty),\]where $q \in W^{1,p}(\O_g)$ is the unique solution of \eqref{q}-\eqref{q0}. Moreover, 
\[\lim_{\l \searrow 0}\Big\|\frac{\SS(g+\l h)-\SS(g)}{\l}-q\Big\|_{L^r(D)}\to 0,\quad \forall\,r \in [1,\infty),\]when $q \in W^{1,p}(\O_g)$ is extended by zero outside $\O_g$. Hence, $\SS:\FF \to L^r(D)$ is directionally differentiable at $g$ in direction $h$ with 
\[\SS'(g;h)=q.\]
\end{theorem}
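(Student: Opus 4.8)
The plan is to assemble the statement directly from the three preceding propositions, after decomposing $\O_g$ and $D$ into the common part $\o_\l=\O_g\cap\O_{g+\l h}$ and the symmetric–difference parts, and then reading off directional differentiability from the definition of $\SS$. First I would observe that Proposition \ref{prop:ml} gives exactly $m_\l=\|q_\l-q\|_{\CC(\partial\o_\l)}\to 0$ as $\l\searrow 0$, which is precisely the hypothesis required by Proposition \ref{prop:lr}; hence $\|q_\l-q\|_{L^r(\o_\l)}\to 0$ for every $r\in[1,\infty)$.

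For the first claim I would use the decomposition $\O_g=\o_\l\cup(\O_g\setminus\O_{g+\l h})$, whose two pieces are disjoint up to a null set, together with the triangle inequality in $L^r$:
\[\|q_\l-q\|_{L^r(\O_g)}\leq\|q_\l-q\|_{L^r(\o_\l)}+\|q_\l-q\|_{L^r(\O_g\setminus\O_{g+\l h})}.\]
The first summand tends to $0$ by the step above, and the second by the first estimate of Proposition \ref{prop:outside}. For the $L^r(D)$ statement I would further split $D$ into the four pairwise (a.e.) disjoint pieces $\o_\l$, $\O_g\setminus\O_{g+\l h}$, $\O_{g+\l h}\setminus\O_g$ and $D\setminus(\O_g\cup\O_{g+\l h})$. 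On the last piece both $q_\l$ and the zero-extension of $q$ vanish identically — recall the conventions of Remarks \ref{rem:S} and \ref{rem:q} and of Proposition \ref{prop:outside} — so it contributes nothing, while the three remaining contributions go to $0$ by Proposition \ref{prop:lr} and by both estimates of Proposition \ref{prop:outside}. Since $\SS(g+\l h)-\SS(g)=y_{g+\l h}-y_g=\l q_\l$, the displayed convergences say precisely that $(\SS(g+\l h)-\SS(g))/\l\to q$ in $L^r(D)$ as $\l\searrow 0$, which is the definition of directional differentiability of $\SS:\FF\to L^r(D)$ at $g$ in direction $h$, with $\SS'(g;h)=q$.

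There is no genuinely hard step remaining: the entire substance of the argument lives in Propositions \ref{prop:ml}, \ref{prop:lr} and \ref{prop:outside}. The only points needing a little care are bookkeeping ones — ensuring the zero-extensions of $y_g$, $y_{g+\l h}$ and $q$ are used consistently on each subregion so that $q_\l-q$ is genuinely the integrand being estimated, and checking that the sets in each decomposition are pairwise disjoint up to measure zero so that the $L^r$ norms add up as claimed.
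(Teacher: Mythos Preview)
Your proposal is correct and follows exactly the approach of the paper, which reads in full: ``The result is due to Propositions \ref{prop:ml} and \ref{prop:outside}.'' You in fact spell out the decomposition more carefully than the paper does and explicitly invoke Proposition \ref{prop:lr} for the $\o_\l$-contribution (which the paper's terse citation list omits but implicitly needs via Proposition \ref{prop:ml}).
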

 \begin{proof}
 The result is due to Propositions \ref{prop:ml} and \ref{prop:outside}.
 \end{proof}

We end this section by recalling that $q$ is in fact the shape derivative of $y_{\O_g}$ in direction $W$ \cite[Def.\,2.85]{sz}, cf.\,Remark \ref{rem:sd}. Hence, in view of Theorem \ref{thm:lr}, we have obtained  
\begin{equation}
\lim_{\l \searrow 0}\frac{\SS(g+\l h)-\SS(g)}{\l}=\lim_{\l \searrow 0}\frac{y_{T^{\WW}_\l(\O_g)}-y_{\O_g}}{\l}=q \text{ in }L^r(D),\end{equation}
where $T^{\WW}_\l(\O_g)$ is the flow generated by a family of vector fields $\WW$ for which $\WW(0)=W.$ We stress that our perturbations happen only at the boundary level, and that, as a consequence, the whole  domain gets then automatically perturbed. An additional (rather formal) confirmation of this statement is given in section \ref{sec:sm} below, where we   present the connection  between the FVA and the speed method.
\section{Equivalence of the shape optimization problem with a  control problem with admissible set consisting of functions}\label{sec:so}
In this section we turn our attention to shape optimization problems governed by \eqref{eq}, such as 
\begin{equation}\tag{\ensuremath{P_\O}}\label{p_sh}
 \left.
 \begin{aligned}
  \min_{\O \in \OO, E \subset \subset \O} \quad &\JJ(\O, y_\O)\\     \text{s.t.} \quad & 
  \begin{aligned}[t]
    -\laplace y_{\O} + \beta(y_{\O})&=f \quad \text{{in} }\O,
   \\y_{\O}&=0  \quad \text{on } \partial \O,\end{aligned} \end{aligned}
 \quad \right\}
\end{equation}where $\JJ:{\mathcal{C}_2 }\times H_0^1(D) \to \R$ is a given objective functional and $\CC_2$ is the set of all subsets of $D$ of class $\CC^2$. The set of admissible shapes   is given by 
\[\OO:=\{\O \subset D: \O \text{ is a  non-empty } \CC^{2} \text{ domain},\ \partial \O \cap \partial D=\emptyset\},\]
while the observation set $E\subset \subset D$ is a non-empty subdomain of $D$.

By arguing exactly as in the proof of  \cite[Prop.\,2.8]{p1}, one shows  that
\begin{equation}\label{o}\OO=\{\O_g:g \in \FF\},\end{equation} where we recall 
 \begin{equation}\label{f'}\FF=\{g \in \CC^{2}(\bar D):\ |\nabla g(x)|+|g(x)|>0 \ \forall\,x\in D,\ g(x)>0 \ \forall\,x\in\partial D, \ g^{-1}((-\infty,0] )\neq \emptyset\}. \end{equation}
 Note that the last condition in \eqref{f'} ensures that $\O_g \neq \emptyset$.
While the inclusion $"\subset"$ in \eqref{o} is due to arguments inspired by \cite[Thm.\,4.1]{dz}, the opposite inclusion follows by \cite[Thm.\,4.2]{dz}. The latter  gives that, for $g \in \FF$, the set $\O_g$ is of class $\CC^2$ (and its boundary does not intersect $\partial D$); but this is   not necessarily a domain. However, by \cite[Lem.\,2.9]{p1}, one can find another $\hat g \in \FF$ so that $\O_{\hat g} \in \OO$.

In the sequel we abbreviate for simplicity \[\OO_E:=\{\O\in \OO:E \subset \subset \O\}\] and 
\[\FF_E:=\{g \in \FF: g<0 \text{ in }\bar E\}.\] Note that, due to the above arguments, it holds
\begin{equation}\label{oe}\OO_E=\{\O_g:g \in \FF_E\}.\end{equation}

As in \cite{p1}, we want to rewrite  \eqref{p_sh}  as an optimal control problem where the admissible set consists of functions, namely \begin{equation}\label{p}\tag{\ensuremath{P}}
 \left.
 \begin{aligned}
  \min_{g \in \FF_E} \quad & \JJ(\O_g,y_g)\\     \text{s.t.} \quad & 
  \begin{aligned}[t]
    -\laplace y_{g} + \beta(y_{g})&=f \quad \text{in }\O_g,
   \\y_{g}&=0  \quad \text{on } \partial \O_g. \end{aligned} \end{aligned}
 \quad \right\}
\end{equation}To ensure that \eqref{p_sh} is equivalent to \eqref{p}, we need to ask that $\JJ$ satisfies for all $\O_1\in \OO,\O_2 \in \CC_2$ the estimate
\begin{equation}\label{j}
\JJ(\O_1,y_{\O_1}) \leq \JJ(\O_2,y_{\O_2}), \quad \text{if }\O_1 \subset \O_2.
\end{equation}

Then, the above mentioned equivalence is to be understood as follows.

\begin{proposition}\label{rem:equiv}
Assume that $\JJ$ fulfills \eqref{j}. Let  $\O^\star \in \OO_E$ be an optimal shape of \eqref{p_sh}. Then, each of the  functions  $g^\star \in \FF_E$ that satisfy $\O_{g^\star}=\O^\star$ is a global minimizer of \eqref{p}. Conversely, if $g^\star \in \FF_E$ minimizes \eqref{p}, then the component of $\O_{g^\star}$ that contains $\bar E$ is an optimal shape for \eqref{p_sh}.
\end{proposition}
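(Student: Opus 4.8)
The plan is to prove the two implications in Proposition~\ref{rem:equiv} by exploiting the monotonicity property~\eqref{j} together with the identification~\eqref{oe} of $\OO_E$ with $\{\O_g:g\in\FF_E\}$, and the fact that the state equation~\eqref{eq} has a unique solution on each admissible domain (so that $y_{\O_g}=y_g$ unambiguously). The key structural point throughout is that different parametrizations $g\in\FF_E$ may give the \emph{same} set $\O_g$, and that a single $g\in\FF_E$ may give a set $\O_g$ with several components, exactly one of which contains $\bar E$ (by the definition of $\FF_E$ and since $E$ is connected); we call this distinguished component $\O_g^E$.

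\textbf{First implication.} Let $\O^\star\in\OO_E$ be optimal for~\eqref{p_sh} and let $g^\star\in\FF_E$ be any parametrization with $\O_{g^\star}=\O^\star$; such $g^\star$ exists by~\eqref{oe}. Since $\O^\star\in\OO$ is a domain, $\O_{g^\star}=\O^\star$ has a single component, so $\O_{g^\star}^E=\O_{g^\star}=\O^\star$. Now take any competitor $g\in\FF_E$ for~\eqref{p}. Its distinguished component $\widetilde\O:=\O_g^E$ satisfies $E\subset\subset\widetilde\O$ and $\widetilde\O\in\OO_E$ (it is a non-empty $\CC^2$ domain by Proposition~\ref{prop:ham} / \cite[Thm.\,4.2]{dz}, with boundary not meeting $\partial D$), while $\widetilde\O\subset\O_g$, $y_{\widetilde\O}$ solving~\eqref{eq} on $\widetilde\O$. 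Optimality of $\O^\star$ over $\OO_E$ gives $\JJ(\O^\star,y_{\O^\star})\le\JJ(\widetilde\O,y_{\widetilde\O})$, and the monotonicity~\eqref{j} applied to $\widetilde\O=\O_g^E\subset\O_g$ (note $\O_g\in\CC_2$) gives $\JJ(\widetilde\O,y_{\widetilde\O})\le\JJ(\O_g,y_g)$. Chaining these, $\JJ(\O_{g^\star},y_{g^\star})=\JJ(\O^\star,y_{\O^\star})\le\JJ(\O_g,y_g)$, which is exactly the assertion that $g^\star$ minimizes~\eqref{p} over $\FF_E$.

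\textbf{Converse implication.} Let $g^\star\in\FF_E$ minimize~\eqref{p}. Set $\O^\star:=\O_{g^\star}^E$, the component of $\O_{g^\star}$ containing $\bar E$; then $\O^\star\in\OO_E$. To see $\O^\star$ is optimal for~\eqref{p_sh}, pick any competitor $\O\in\OO_E$ and, via~\eqref{oe}, a parametrization $g\in\FF_E$ with $\O_g=\O$; since $\O$ is a domain containing $\bar E$, we have $\O_g^E=\O_g=\O$. By minimality of $g^\star$ in~\eqref{p}, $\JJ(\O_{g^\star},y_{g^\star})\le\JJ(\O_g,y_g)=\JJ(\O,y_\O)$. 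It remains to pass from $\JJ(\O_{g^\star},y_{g^\star})$ to $\JJ(\O^\star,y_{\O^\star})$; here one replaces $g^\star$ by a parametrization $\hat g\in\FF_E$ with $\O_{\hat g}=\O^\star$ (this is the content of \cite[Lem.\,2.9]{p1}, applied to the component $\O^\star$ of $\O_{g^\star}$, and $\hat g\in\FF_E$ because $\bar E\subset\O^\star$), and uses~\eqref{j} with $\O^\star\subset\O_{g^\star}$ to obtain $\JJ(\O^\star,y_{\O^\star})=\JJ(\O_{\hat g},y_{\hat g})\le\JJ(\O_{g^\star},y_{g^\star})$. Combining, $\JJ(\O^\star,y_{\O^\star})\le\JJ(\O,y_\O)$ for every $\O\in\OO_E$, so $\O^\star$ is an optimal shape for~\eqref{p_sh}.

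\textbf{Main obstacle.} The only delicate point is the bookkeeping around multi-component level sets: one must consistently distinguish the \emph{set} $\O_g$ (possibly disconnected, lying in $\CC_2$ but not in $\OO$) from its distinguished \emph{domain} component $\O_g^E\in\OO_E$, and verify at each use of~\eqref{j} that the larger set is indeed in $\CC_2$ and the smaller one is a $\CC^2$ domain containing $\bar E$. The re-parametrization step (replacing $g^\star$ by $\hat g$ with $\O_{\hat g}=\O^\star$, staying inside $\FF_E$) rests entirely on \cite[Lem.\,2.9]{p1}, so the substantive work is citing that correctly rather than any new estimate; there is no analytic difficulty beyond uniqueness of the state, which is already available.
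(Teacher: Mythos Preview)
Your proof is correct and follows essentially the same route as the paper: in both directions you pass between a (possibly disconnected) sublevel set $\O_g$ and its distinguished component containing $\bar E$, invoking the monotonicity assumption~\eqref{j} to bridge the two, together with the identification~\eqref{oe}. The only cosmetic difference is that in the converse the paper applies~\eqref{j} directly to the inclusion $\o_{g^\star}\subset\O_{g^\star}$ without explicitly introducing a re-parametrization $\hat g$ of the component, whereas you route this step through \cite[Lem.\,2.9]{p1}; the content is identical.
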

\begin{proof}We follow the lines of the proof of \cite[Prop.\,2.10]{p1}. 
Let $ \O^\star \in \OO$ be an optimal shape for \eqref{p_sh}, i.e.,
\[\JJ(\O^\star , y_{ \O^\star})\leq \JJ(\O  , y_{ \O }) \quad \forall\,\O \in \OO_E.
\]
Now, let $g^\star \in \FF$ with $\O_{g^\star}=\O^\star$ be fixed (note that, according to \cite[Lem.\,2.9]{p1}, there are infinitely many mappings with this property).  
Then, 
$y_{\O^\star}=y_{ g^\star}$ and 
\[\JJ(\O_{g^\star} , y_{g^\star})\leq \JJ(\O  , y_{ \O }) \quad \forall\,\O \in \OO_E.\]
Let $g \in \FF_E$, be arbitrary and fixed; define $\hat g \in \FF$ so that $\O_{\hat g}\in \OO$ is the component of $\O_g$ that contains $\bar E$. Testing with $\O_{\hat g}$ in the above inequality yields 
\begin{align*}\JJ(\O_{g^\star} , y_{g^\star})\leq \JJ(\O_{\hat g}  , y_{ \hat g })
&\leq \JJ(\O_{  g}  , y_{  g }) ,\end{align*}
where in the second estimate we  used {\eqref{j}}. This proves the first statement.
\\
To show the converse assertion, assume that $g^\star \in \FF_E$ satisfies
\begin{equation}\label{inn}
\JJ(\O_{g^\star} , y_{g^\star})\leq  \JJ(\O_{  g}  , y_{  g }) \quad \forall\,g \in \FF_E.\end{equation}
We denote by $\o_{g^\star}$  the component of $\O_{g^\star}$ that contains $\bar E$. 
Let $\O \in \OO_E$ be arbitrary but fixed. Again, we can define $\widetilde g \in \FF_E$ so that $\O_{\widetilde g}=\O.$ Then,
$y_{\O}=y_{\widetilde g}$. In view of \eqref{inn}, where we test with $\widetilde g$, we have
\begin{align*}
\JJ(\o_{g^\star} , y_{g^\star})
\leq \JJ(\O_{g^\star} , y_{g^\star})\leq \JJ(\O_{\widetilde g} , y_{\widetilde g})=\JJ(\O  , y_{\O}),
\end{align*}where in the first estimate we used again \eqref{j}. This proves the second assertion and completes the proof.
\end{proof}

With the result in Proposition \ref{rem:equiv} at hand, the reduced control problem we are interested in the sequel 
 reads
\begin{equation}\label{red_c_pb}\min_{g \in \FF_E} \quad j(g),\end{equation}where 
we abbreviate $$j(g):=\JJ(\O_g, y_g)$$ in all what follows. 

\subsection{Local optimality}\label{sec:lo}
In this subsection, we go a step further and establish a notion of local optimal shape for \eqref{p_sh} and its relation with local optimality for control problems such as \eqref{red_c_pb}.

\begin{definition}\label{def:loc0}We say that $\O^\star \in \OO_E$ is   locally optimal for \eqref{p_sh} if there exists $R>0$ so that
\[\JJ(\O^\star , y_{ \O^\star})\leq \JJ(\O  , y_{ \O }) \quad \forall\,\O \in \OO_E \text{ with }d_{\HH}(\partial \O^\star, \partial \O)\leq R .
\]\end{definition}


\begin{theorem}\label{prop:loc}
Let  $\O^\star \in \OO_E$ be a locally optimal shape of \eqref{p_sh}. Then, each of the  functions  $g^\star \in \FF_E$ that satisfy $\O_{g^\star}=\O^\star$ is a "local minimizer" of \eqref{p} in the following (restricted) sense: There exists 
$r>0$ so that
\begin{equation}
\begin{aligned}\label{loc_opt}j({g^\star})\leq  j({  g }) \quad &\forall\,g\in \FF_E \text{ with }\|g-g^\star\|_{\CC^1(\bar D)}\leq r \text{ so that }\O_g \in \OO,
\\&\{g=0\}\cap \{g^\star =0\}^\circ \neq \emptyset \quad \forall\,\{g^\star =0\}^\circ\subset \{g^\star=0\},
\\&\text{ and }\{g^\star=0\} \cap \{g  =0\}^\circ \neq \emptyset \quad \forall\,\{g  =0\}^\circ\subset \{g=0\}.\end{aligned}\end{equation}
\end{theorem}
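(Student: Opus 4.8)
The plan is to transfer the local optimality of $\O^\star$ in the Hausdorff-Pompeiu sense (Definition \ref{def:loc0}) into a statement about competitors $g \in \FF_E$ that are $\CC^1$-close to a fixed parametrization $g^\star$ of $\O^\star$. First I fix $g^\star \in \FF_E$ with $\O_{g^\star}=\O^\star$ and let $R>0$ be the radius from Definition \ref{def:loc0}. Given a competitor $g \in \FF_E$ satisfying the three conditions listed in \eqref{loc_opt} for some small $r>0$, I pass (exactly as in Proposition \ref{rem:equiv}) to the component $\O_{\hat g}$ of $\O_g$ that contains $\bar E$, which lies in $\OO_E$; by \eqref{j} it suffices to compare $j(g^\star)$ with $\JJ(\O_{\hat g}, y_{\hat g})$. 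The key quantitative step is then to show that if $\|g-g^\star\|_{\CC^1(\bar D)} \leq r$ with $r$ small enough (depending only on $g^\star$ and $R$), then $d_{\HH}(\partial \O^\star, \partial \O_{\hat g}) \leq R$, so that the local optimality of $\O^\star$ applies and yields $\JJ(\O^\star, y_{\O^\star}) \leq \JJ(\O_{\hat g}, y_{\hat g}) \leq \JJ(\O_{g}, y_{g}) = j(g)$, which is \eqref{loc_opt}.

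\textbf{Controlling the boundary distance.} The heart of the matter is the estimate $d_{\HH}(\{g^\star=0\}, \{g=0\}) \leq C\|g-g^\star\|_{\CC^1(\bar D)}$ for $g$ near $g^\star$. One direction: if $x \in \{g^\star=0\}$, then $|g(x)| = |g(x)-g^\star(x)| \leq \|g-g^\star\|_{\CC^0}$ is small, and since $|\nabla g^\star| + |g^\star| \geq \delta > 0$ on $\bar D$ (Weierstrass, as in \eqref{gd}) with $g^\star(x)=0$, one has $|\nabla g^\star(x)| \geq \delta$, hence (for $r$ small) $|\nabla g(x)| \geq \delta/2$; a standard implicit-function/transversality argument along the gradient direction then produces a nearby point $\tilde x \in \{g=0\}$ with $|x-\tilde x| \leq (2/\delta)\|g-g^\star\|_{\CC^0}$. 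The symmetric direction is analogous, using the uniform lower bound $|\nabla g| + |g| \geq \delta/2$ valid for $g$ close to $g^\star$ (Lemma \ref{lem:glh}-type argument). Combining the two gives $d_{\HH}(\partial \O^\star, \partial \O_g) \leq (2/\delta)\|g-g^\star\|_{\CC^0(\bar D)}$, and since $\partial \O_{\hat g} \subset \partial \O_g$ (the boundary of the selected component is part of $\{g=0\}$) one also gets $\max_{\tilde x \in \partial \O_{\hat g}} \dist(\tilde x, \partial \O^\star) \leq (2/\delta)r$. For the reverse inclusion $\max_{x \in \partial \O^\star}\dist(x, \partial \O_{\hat g})$ I use that the three intersection conditions in \eqref{loc_opt}, together with $\|g-g^\star\|_{\CC^1}\leq r$, force $\{g=0\}$ to stay in an $\e$-neighbourhood of $\{g^\star=0\}$ and to have the same components meeting $E$ (this is the uniqueness/stability content encoded in Proposition \ref{lem:unique}, now with $g^\star$ in the role of $g$ and $g-g^\star$ in the role of $\l h$), so no part of $\partial \O^\star$ can be left uncovered by $\partial \O_{\hat g}$. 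Choosing $r := \delta R / 2$ (or smaller, to also ensure the Lemma \ref{lem:glh}-type bounds and the component-matching) then gives $d_{\HH}(\partial \O^\star, \partial \O_{\hat g}) \leq R$ as required.

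\textbf{Main obstacle.} The routine part is the one-sided $\CC^0$-Lipschitz bound on the Hausdorff distance of zero level sets; the genuinely delicate point is ensuring that the \emph{selected} component $\O_{\hat g}$ (the one containing $\bar E$) is the correct competitor and that its boundary is Hausdorff-close to $\partial \O^\star$ as a whole, rather than just being contained in a neighbourhood of it. This is exactly why the three transversality conditions $\{g=0\}\cap\{g^\star=0\}^\circ \neq \emptyset$ (for every component) and their converse are imposed in \eqref{loc_opt}: they are the analogue of \eqref{hx00} and guarantee, via Proposition \ref{lem:unique} applied with base point $g^\star$, a bijection between the components of $\{g^\star=0\}$ and those of $\{g=0\}$ for $\|g-g^\star\|_{\CC^1}$ small, each component of one being the ``approximating curve'' of the corresponding component of the other. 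I would spell this correspondence out carefully, note that it sends the component of $\partial \O^\star$ enclosing $E$ to the component of $\partial \O_{\hat g}$ enclosing $E$, and deduce the full two-sided Hausdorff estimate from it. Once the correspondence and the quantitative bound are in place, the conclusion follows by the chain of inequalities displayed above, and the proof is complete.
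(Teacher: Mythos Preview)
Your core estimate---bounding $d_\HH(\{g^\star=0\},\{g=0\})$ linearly in $\|g-g^\star\|_{\CC^1(\bar D)}$ via a gradient-direction/implicit-function argument---is correct and yields the result, but it is a genuinely different route from the paper's. The paper works through the Hamiltonian description of the boundaries: for each component $\{g^\star=0\}^\circ$ it uses the intersection hypothesis to choose a \emph{common} initial point $x_0\in\{g^\star=0\}^\circ\cap\{g=0\}$, runs the two Hamiltonian systems \eqref{ham} with data $g^\star$ and $g$ from this $x_0$, and obtains $|z_{g^\star}(t)-z_g(t)|\leq C\,t\,\|g^\star-g\|_{\CC^1(\bar D)}$ by a Gronwall estimate (as in \cite[Prop.\,3.10]{p3}); this directly bounds both sides of the Hausdorff distance. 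The intersection conditions in \eqref{loc_opt} are precisely what makes that argument run (they align the starting points of the two flows), whereas your level-set argument does not use them at all---so you are in effect proving the slightly stronger statement that \eqref{loc_opt} holds for all $g\in\FF_E$ with $\O_g\in\OO$ and $\|g-g^\star\|_{\CC^1}\leq r$.

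Two points where your write-up overcomplicates things. First, the passage to the $E$-component $\O_{\hat g}$ and the appeal to \eqref{j} are unnecessary: the hypothesis $\O_g\in\OO$ already says $\O_g$ is a domain, so $\O_{\hat g}=\O_g$ and $\partial\O_g=\{g=0\}$; the paper's proof makes no use of \eqref{j} (which is not an assumption of this theorem). Second, and as a consequence, what you flag as the ``main obstacle''---matching components and getting the two-sided Hausdorff bound for $\partial\O_{\hat g}$---is not an obstacle: once $\partial\O_g=\{g=0\}$ and $\partial\O^\star=\{g^\star=0\}$, your two symmetric one-sided bounds already give the full estimate. The only place that deserves a line of care is the quantitative step producing $\tilde x\in\{g=0\}$ with $|x-\tilde x|\lesssim\|g-g^\star\|_{\CC^0}/\delta$: to keep the directional derivative bounded below along a short segment you need a modulus of continuity for $\nabla g$, which you get from $\|\nabla g-\nabla g^\star\|_{\CC^0}\leq r$ together with $g^\star\in\CC^2(\bar D)$.
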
 
\begin{remark}
The concept of local optimality  for \eqref{red_c_pb}  in Theorem \ref{prop:loc}   may appear a bit restrictive. However,  the admissible set in \eqref{loc_opt} allows for test functions such as $g+\l h$, provided that $\l$ is  small enough  and $h \in \FF$ satisfies \eqref{hx00}. Since this is the only type of functional variation  we are interested in, Theorem \ref{prop:loc} serves its purpose and we can provide first-order necessary optimality conditions for all locally optimal shapes of \eqref{p_sh}, cf.\,Theorem \ref{prop:necc} below.
\end{remark}
\begin{proof}
Let $g^\star \in \FF_E$ with $\O_{g^\star}=\O^\star$ be fixed.  According to Proposition \ref{prop:ham}, $\{g^\star =0\}$ constitutes of a finite number of closed curves and we denote in the sequel by $T>0$ the largest period among these curves.

Define \begin{equation}\label{r}r:=\frac{R}{T\exp(2T\sup\{L_{\partial_1 g^\star},L_{\partial_2 g^\star}\})}>0,\end{equation}where $L_{\partial_i g^\star}>0,i=1,2$ denotes the Lipschitz constant of ${\partial_i g^\star}$ and $R>0$ is the radius of local optimality of $\O^\star$.
Let $g \in \FF_E$ with  
\begin{equation}\label{c1d}\|g-g^\star\|_{\CC^1(\bar D)}\leq r \end{equation} and $\O_g \in \OO$ so that 
 \begin{equation}\label{gx00}\{g=0\} \cap \{g^\star =0\}^\circ \neq \emptyset \quad \forall\,\{g^\star =0\}^\circ\subset \{g^\star=0\},\end{equation}
 \begin{equation}\label{gx00'}\{g^\star=0\} \cap \{g  =0\}^\circ \neq \emptyset \quad \forall\,\{g  =0\}^\circ\subset \{g=0\}.\end{equation}  
  Our aim is to show that $d_{\HH}(\partial \O^\star, \partial \O_{g})\leq R$. 
 

Take $\{g^\star=0\}^\circ$. In light of \eqref{gx00} there is a component of  $\{g=0\}$ that intersects $\{g^\star=0\}^\circ$, say $\{g=0\}^\circ$. Then,  we can choose
in the Hamiltonian system  \eqref{ham} associated with $\{g^\star=0\}^\circ$  the initial condition $ {x}_0\in \{g=0\}^\circ$, so that 
 \begin{equation}\label{gx0}g^\star( {x}_0) = g(x_0)=0.\end{equation}

Estimating as in the proof of \cite[Prop.\,3.10]{p3}   we obtain
 \[
 | z_{g^\star} (t)-z_{g}(t)| \leq   2\sup\{L_{\partial_1 g^\star},L_{\partial_2 g^\star}\}\int_0^t  | z_{g^\star} (s)-z_{g}(s)|\,ds +t\|g^\star -{g}\|_{\CC^1(\bar D)}\quad \forall\,t \in \R_{+},\]
and Gronwall's lemma gives in turn 
  \begin{equation}\label{gronw0}
 \begin{aligned}
 | z_{g^\star} (t)-z_{g}(t)| &\leq Ct\|g^\star -{g}\|_{\CC^1(\bar D)}\quad \forall\,t \in \R_{+},\end{aligned}
\end{equation}where \begin{equation}\label{c}C:=\exp(2T\sup\{L_{\partial_1 g^\star},L_{\partial_2 g^\star}\}).\end{equation}According to \cite[Prop.\,2.2.27]{hp}, it holds  
 $$d_{\HH}(\partial \O^\star, \partial \O_{g})=\|\dist_{\partial \O^\star}-\dist_{\partial\O_{g}}\|_{L^\infty(\partial \O^\star\cup \partial \O_{g})}.$$ 
Take $x\in \partial \O^\star\cup \partial \O_{g}$ arbitrary but fixed. If $x \in \partial \O^\star$, then $x$ belongs to a closed curve $\{g^\star=0\}^\circ$ and there is $t\in[0,T]$ so that $x=z_{g^\star}(t)$. Then
\[| \dist_{\partial\O_{g}}x|\leq 
|z_{g^\star}(t)-z_{g}(t)|\leq CT\|g^\star -{g}\|_{\CC^1(\bar D)},
\] where $z_{g}$ describes a closed curve $\{g =0\}^\circ$ intersecting $\{g^\star=0\}^\circ$ (that is, we can choose $z_{g^\star}(0)=z_{g}(0)=  x_0$ which allows us to make use of \eqref{gronw0}).
We estimate in the exact same manner for $x \in \partial \O_{g}$. We underline that this is possible due to \eqref{gx00'}. To summarize we obtained 
 $$d_{\HH}(\partial \O^\star, \partial \O_{g})\leq CT\|g^\star -{g}\|_{\CC^1(\bar D)}\leq \exp(2T\sup\{L_{\partial_1 g^\star},L_{\partial_2 g^\star}\})Tr =R,$$where we relied on \eqref{c}, \eqref{c1d} and \eqref{r}. 
 
 Finally we recall that $\O_{g^\star}$ is locally optimal for \eqref{p_sh}, cf.\,Definition \ref{def:loc0}, i.e.,\[\JJ(\O_{g^\star} , y_{g^\star})\leq \JJ(\O  , y_{ \O }) \quad \forall\,\O \in \OO_E \text{ with }d_{\HH}(\partial \O^\star, \partial \O)\leq R .
\] Since $\O_g \in \OO_E$ (as $\O_g \in \OO$ and $g \in \FF_E$)
we deduce \[\JJ(\O_{g^\star} , y_{g^\star})\leq \JJ(\O_{g}  , y_{ \O_{g}  }) \]which  completes the proof.
\end{proof}

We end this section with an essential observation concerning the opposite implication in Theorem \ref{prop:loc}.
\begin{remark}
The fundamental argument standing at the core of the proof of Theorem \ref{prop:loc} is that if two parametrizations are "close" to each other, then their associated shapes will be "close" too. The reverse statement is however not true.
If  two   shapes are identical even, this does not mean that the same is true for their parametrizations, on the contrary:
for each $ g \in \FF$, it holds $\O_{ g}=\O_{n\,g}, \ n \in \N_+,$ that is,  $\|g_n-g\|_{\CC^1(\bar D)} \to \infty$ as $n \to \infty$ for $g_n=n\,g$.  Hence, it is not clear if the local optimality of some $g^\star$ for the control problem \eqref{red_c_pb} implies that $\O_{g^\star}$ is  a  locally optimal shape for \eqref{p_sh}.

\end{remark}
\section{Necessary optimality conditions for locally optimal shapes}\label{sec:noc}
With the results from the previous section at hand, we can write a necessary optimality condition in primal form for each  locally optimal shape  of \eqref{p_sh}.
\begin{theorem}\label{prop:necc}
 Let  $\O^\star \in \OO_E$ be a locally optimal shape of \eqref{p_sh} in the sense of Definition \ref{def:loc0}. Then, each of the  functions  $g^\star \in \FF_E$ that satisfy $\O_{g^\star}=\O^\star$ fulfill
\[j'(g^\star; h)\geq 0 \quad \forall\, h \in \FF \text{ with }\{h=0\}\cap \{g^\star =0\}^\circ \neq \emptyset \quad \forall\,\{g^\star =0\}^\circ\subset \{g^\star=0\},\]provided that $j$ is directionally differentiable at $g^\star$ in direction $h$.
\end{theorem}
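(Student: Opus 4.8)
The plan is to fix a feasible direction $h$, show that the difference quotient $\l^{-1}\big(j(g^\star+\l h)-j(g^\star)\big)$ is nonnegative for all sufficiently small $\l>0$, and then let $\l\searrow 0$, using the hypothesis that $j$ is directionally differentiable at $g^\star$ in direction $h$. The nonnegativity of the quotient is where Theorem \ref{prop:loc} enters: since $\O^\star$ is locally optimal, each parametrization $g^\star\in\FF_E$ with $\O_{g^\star}=\O^\star$ is a local minimizer of \eqref{red_c_pb} in the restricted sense \eqref{loc_opt}, i.e.\ there is $r>0$ such that $j(g^\star)\le j(g)$ for every $g\in\FF_E$ with $\|g-g^\star\|_{\CC^1(\bar D)}\le r$, $\O_g\in\OO$, and the intersection conditions \eqref{gx00}--\eqref{gx00'}. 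So the heart of the proof is to check that $g:=g^\star+\l h$ satisfies these four requirements once $\l>0$ is small.

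First I would verify, for $\l>0$ small enough: $(i)$ $g^\star+\l h\in\FF$ by Lemma \ref{lem:glh}, and $g^\star+\l h<0$ on the compact set $\bar E$ (because $g^\star<0$ there), so $g^\star+\l h\in\FF_E$; $(ii)$ $\|g^\star+\l h-g^\star\|_{\CC^1(\bar D)}=\l\|h\|_{\CC^1(\bar D)}\le r$; $(iii)$ $\O_{g^\star+\l h}\in\OO$; and $(iv)$ the two intersection conditions \eqref{gx00}--\eqref{gx00'} hold with $g$ replaced by $g^\star+\l h$. For $(iv)$ I would argue as follows: by the hypothesis $\{h=0\}\cap\{g^\star=0\}^\circ\neq\emptyset$, each component $\{g^\star=0\}^\circ$ contains a point $x_0$ with $h(x_0)=g^\star(x_0)=0$, hence $(g^\star+\l h)(x_0)=0$; thus $x_0\in\{g^\star+\l h=0\}\cap\{g^\star=0\}^\circ$, and since $x_0$ lies on $\{g^\star=0\}^\circ$ it lies, by the uniqueness statement of Proposition \ref{lem:unique}, on its approximating curve $\{g^\star+\l h=0\}^\circ$, which establishes both \eqref{gx00} and \eqref{gx00'}. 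For $(iii)$ I would combine Proposition \ref{lem:unique} (the components of $\{g^\star+\l h=0\}$ are, for $\l$ small, in bijection with those of $\{g^\star=0\}$, each confined to a disjoint $\e$-neighbourhood of the corresponding curve and never touching $\partial D$), the $\CC^2$ character of these curves (Proposition \ref{prop:ham}), and the domain convergence recorded in the remark after Lemma \ref{lem:l}, to conclude that $\O_{g^\star+\l h}$ has, for $\l$ small, the same number of components as $\O^\star$ — i.e.\ it is again a $\CC^2$ domain.

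Granting $(i)$--$(iv)$, Theorem \ref{prop:loc} yields $j(g^\star)\le j(g^\star+\l h)$ for all sufficiently small $\l>0$, so $\l^{-1}\big(j(g^\star+\l h)-j(g^\star)\big)\ge 0$; passing to the limit $\l\searrow 0$ and invoking the assumed directional differentiability of $j$ at $g^\star$ in direction $h$ gives $j'(g^\star;h)\ge 0$, as claimed. I expect the one genuinely non-routine point to be $(iii)$: one must rule out that a small functional variation pinches the (possibly multiply connected) domain $\O^\star$ into several pieces or merges a hole with the outer boundary, since Theorem \ref{prop:loc} applies only to competitors $\O_g\in\OO$; this is exactly where the quantitative stability of the boundary curves under functional variations from Proposition \ref{lem:unique} — together with the fact that, for $\l$ small, none of the perturbed curves reaches $\partial D$ — does the work. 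The remaining items $(i)$, $(ii)$, $(iv)$ are immediate once $\l$ is taken small enough.
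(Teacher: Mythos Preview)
Your proposal is correct and follows essentially the same approach as the paper's proof: invoke Theorem \ref{prop:loc}, verify that $g^\star+\l h$ satisfies all the admissibility requirements in \eqref{loc_opt} for $\l$ small (using Lemma \ref{lem:glh}, the hypothesis on $h$, and Proposition \ref{lem:unique}), and pass to the limit in the nonnegative difference quotient. In fact you are slightly more careful than the paper, which does not explicitly address your item $(iii)$ (that $\O_{g^\star+\l h}\in\OO$); your sketch for this point via Proposition \ref{lem:unique} and the domain-convergence remark is the natural way to fill that in.
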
\begin{proof}
Thanks to Theorem \ref{prop:loc}, we have  \begin{equation}\label{jj}\begin{aligned}
j({g^\star})\leq  j({  g })  \quad &\forall\,g\in \FF_E \text{ with }\|g-g^\star\|_{\CC^1(\bar D)}\leq r \text{ so that }\O_g \in \OO,
\\&\{g=0\}\cap \{g^\star =0\}^\circ \neq \emptyset \quad \forall\,\{g^\star =0\}^\circ\subset \{g^\star=0\},
\\&\text{ and }\{g^\star=0\} \cap \{g  =0\}^\circ \neq \emptyset \quad \forall\,\{g  =0\}^\circ\subset \{g=0\},\end{aligned}\end{equation}
with some $r>0$.
By using Weierstrass theorem, see for instance \cite[Lem.\,3.8]{p3}, one can show that, given $h \in \CC^2(\bar D)$, it holds 
\[g^\star \in \FF_E \Rightarrow g^\star+\l h=:\widehat g \in \FF_E \]
for $\l>0$ small, dependent only on $g^\star$ and $h$ and the given data. Moreover, $\|\widehat g-g^\star\|_{\CC^1(\bar D)}=\l\|h\|_{\CC^1(\bar D)}\leq r, $ by choosing $\l>0$ small enough. Now let $h \in \FF$ with \[\{h=0\}\cap \{g^\star =0\}^\circ \neq \emptyset \quad \forall\,\{g^\star =0\}^\circ\subset \{g^\star=0\}\]
be arbitrary but fixed and fix also  $\{g^\star =0\}^\circ\subset \{g^\star=0\}$. Then there is $x_0 \in \{g^\star =0\}^\circ$ so that $h(x_0)=0$, which means that $(g^\star+\l h)(x_0)=0$. That is, the second line in \eqref{jj} is satisfied by $\hat g=g^\star+\l h.$ If $\l>0$ is chosen so small that $\{\widehat g=0\}^\circ$ is the unique approximating curve of $\{g^\star=0\}^\circ$ (cf.\,Proposition \ref{lem:unique}), then each of the components of $\{\widehat g=0\}$ intersects $\{g^\star=0\}$. This implies that  the third  line in \eqref{jj} is satisfied by $\widehat g=g^\star+\l h$ as well. 

In view of all the above, $\widehat g$ is admissible for \eqref{jj} and we obtain
\[j(g^\star)\leq j(g^\star+\l h)\quad \forall \l \in (0,\l_0(h,g^\star)),\]
where $\l_0(h,g^\star)>0$ is small enough. Hence, dividing by $\l$, rearranging the terms  and using that $j$ is directionally differentiable at $g^\star$ in direction $h$
 yields
the desired assertion.\end{proof}

In the next two subsections we investigate the directional differentiability of shape functionals $\JJ(\O_g,y_{\O_g})$ w.r.t.\,the shape function $g$ for various types of instances for $\JJ$. Thanks to the above result, we can  provide necessary optimality conditions not only for globally but for locally optimal shapes as well.

\subsection{$H^2$ norms and pointwise observation on a given set}\label{sec:gobs}
In this subsection we are concerned with shape functionals of the type 
\[\JJ( y_\O):=  \frac{1}{2}\|y_\O -y_d \|_{H^2(E)}^2 +j_{obs}(y_\O) ,\quad E \subset \subset \O,\]
where $y_d \in H^2(E)$ and 
\[j_{obs}(y_\O) :=   \frac{1}{2}\sum_{k=1}^{N} (y_\O-y_d)^2(x^k_{obs}) ,\] where $N \in \N_+$, $x^k_{obs}\in \bar E, \ k = 1,...,N$. Note that performance indices such as $j_{obs}$ are of particular interest when the optimal shape must be chosen so that the state fits a desired value $y_d$ which can be measured only pointwise (at $x^k_{obs}$) by a finite number of sensors $N$.


The corresponding reduced objective is
\[ j(g)=  \frac{1}{2}\|y_g -y_d \|_{H^2(E)}^2+\frac{1}{2}\sum_{k=1}^{N}(y_g-y_d)^2(x^k_{obs}),\quad g \in \FF \text{ with }g<0 \text{ in }\bar E.\]

We now intend to apply the differentiability result from Theorem \ref{thm:h1} in order to obtain a formula for the directional derivative of $j$.

\begin{proposition}\label{thm:1}Assume that $g, h \in \FF$ and \eqref{hx00} is true. Moreover, suppose that  $\O_g \in \OO_E$.  Then $j$ is directionally differentiable at $g$ in direction $h$ with  \[j'(g;h)= (y_g -y_d,q)_{H^2(E)}+j'_{obs}(g;h),\]where $q=\SS'(g;h)$ is the unique solution of \eqref{q}-\eqref{q0} and 
\[j_{obs}'(g;h)=\sum_{k=1}^{N} (y_g-y_d)(x^k_{obs})q(x^k_{obs}).\]
\end{proposition}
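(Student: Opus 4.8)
The plan is to prove directional differentiability by decomposing the reduced objective into its three constituent terms and handling each one separately, then combining via the standard sum rule for directional derivatives. Write $j(g) = j_1(g) + j_2(g)$, where $j_1(g) := \frac12\|y_g - y_d\|_{H^2(E)}^2$ and $j_2(g) := j_{obs}(y_g) = \frac12 \sum_{k=1}^N (y_g - y_d)^2(x_{obs}^k)$. The key analytic input is Theorem~\ref{thm:h1}, which gives $\|(y_{g+\l h} - y_g)/\l - q\|_{H^2(\o)} \to 0$ for every compact $\o \subset\subset \O_g$; since $\O_g \in \OO_E$ means $\bar E \subset\subset \O_g$, we may take $\o$ to be any open set with $\bar E \subset \o \subset\subset \O_g$, so in particular $(y_{g+\l h} - y_g)/\l \to q$ in $H^2(E)$.

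For $j_1$, I would write the difference quotient as
\begin{equation*}
\frac{j_1(g+\l h) - j_1(g)}{\l} = \Big(\tfrac{y_{g+\l h} - y_g}{\l},\ \tfrac{y_{g+\l h} + y_g}{2} - y_d\Big)_{H^2(E)},
\end{equation*}
using the polarization/telescoping identity $\frac12(\|a\|^2 - \|b\|^2) = (a-b, \frac{a+b}{2})$ with $a = y_{g+\l h} - y_d$, $b = y_g - y_d$. As $\l \searrow 0$, the first factor converges to $q$ in $H^2(E)$ by Theorem~\ref{thm:h1}, and the second factor converges to $y_g - y_d$ in $H^2(E)$ (since $y_{g+\l h} \to y_g$ in $H^2(E)$, which follows from the same convergence of the difference quotient times $\l$, or more directly from Proposition~\ref{lem:conv_yl} combined with the interior $H^2$ estimate already used in the proof of Proposition~\ref{prop:lr}). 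Continuity of the inner product then yields $j_1'(g;h) = (y_g - y_d, q)_{H^2(E)}$. For $j_2$, the same telescoping trick applied pointwise gives
\begin{equation*}
\frac{j_2(g+\l h) - j_2(g)}{\l} = \sum_{k=1}^N \frac{(y_{g+\l h} - y_g)(x_{obs}^k)}{\l}\cdot\Big(\frac{y_{g+\l h} + y_g}{2} - y_d\Big)(x_{obs}^k),
\end{equation*}
and here I invoke the continuous embedding $H^2(E) \embed \CC(\bar E)$ (valid in dimension two), so that convergence in $H^2(E)$ implies uniform convergence on $\bar E$, hence pointwise convergence at each $x_{obs}^k \in \bar E$. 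This gives $j_2'(g;h) = \sum_{k=1}^N (y_g - y_d)(x_{obs}^k)\, q(x_{obs}^k)$.

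Finally, since both limits exist, the difference quotient of $j = j_1 + j_2$ converges to the sum of the two limits, which is exactly the claimed formula with $j_{obs}'(g;h) = \sum_{k=1}^N (y_g - y_d)(x_{obs}^k)\, q(x_{obs}^k)$; recall $q = \SS'(g;h)$ is the unique solution of \eqref{q}--\eqref{q0} by Theorem~\ref{thm:lr} (or Proposition~\ref{prop:q}). The main obstacle, and the only place where real work beyond bookkeeping is needed, is justifying that the relevant norms and point evaluations are taken over a region strictly interior to $\O_g$: Theorem~\ref{thm:h1} provides convergence only on compact subsets of $\O_g$, not up to the moving boundary, so it is essential to use the hypothesis $\O_g \in \OO_E$ (equivalently $g < 0$ on $\bar E$, i.e.\ $\bar E \subset\subset \O_g$) to insert an intermediate open set $\o$ with $\bar E \subset \o \subset\subset \O_g$ and to observe that the embedding $H^2(\o) \embed \CC(\bar\o)$ controls the pointwise terms uniformly. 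Everything else is the elementary sum rule and continuity of bilinear forms.
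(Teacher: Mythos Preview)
Your proof is correct and follows essentially the same approach as the paper: invoke Theorem~\ref{thm:h1} with $\bar E\subset\subset\O_g$ to get $H^2(E)$-convergence of the difference quotient, use the embedding $H^2(E)\embed\CC(\bar E)$ for the pointwise terms, and then differentiate the quadratic functionals. The paper simply says ``applying the chain rule'' where you write out the polarization identity explicitly, but the argument is the same.
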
\begin{proof}The assumption   $\O_g \in \OO_E$ implies $E \subset \subset \O_g$. From Theorem \ref{thm:h1} we deduce that $\SS:\FF \to H^2(E)$ is directionally differentiable with $q=\SS'(g;h)$, and  in particular 
\[\lim_{\l \searrow 0} q_\l(x^k_{obs})=q(x^k_{obs}),\]where we use $H^2(E) \embed \CC(\bar E)$ and we abbreviate again $q_\l:=\frac{y_{g+\l h}-y_g}{\l}$. Applying the chain rule then leads to the desired statement.\end{proof}

With the result in the above theorem at hand we can provide a first-order necessary  optimality condition in primal form for all locally optimal shapes of \eqref{p_sh}.

\begin{proposition}
Let  $\O^\star \in \OO_E$ be a locally  optimal shape of \eqref{p_sh}. Then, each of the  functions  $g^\star \in \FF_E$ that satisfy $\O_{g^\star}=\O^\star$ fulfill
\[ (y_{g^\star} -y_d,q)_{H^2(E)}+\sum_{k=1}^{N} (y_{g^\star}-y_d)(x^k_{obs})q(x^k_{obs}) \geq 0 \]for all $h \in \FF$ with \begin{equation*}
\{h=0\}\cap \{g^\star=0\}^\circ \neq \emptyset \quad \forall\,\{g^\star=0\}^\circ\subset \{g^\star=0\}, \end{equation*}where  $q \in W^{1,p}(\O_{g^\star})$  is the unique solution of \eqref{q}-\eqref{q0}. That is,  
\begin{equation} 
-\laplace  q +  \beta'(y_{g^\star};q) =0 \quad \text{in }  \O_{g^\star}, \end{equation}
\begin{equation}
q+ \nabla y_{g^\star}W =0 \quad \text{on } \partial \O_{g^\star},\end{equation}
and  $W:\partial \O_{g^\star} \to \R^2$ is given by Definition \ref{def:w}. 
\end{proposition}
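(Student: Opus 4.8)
The plan is to combine the two ingredients that are already fully in place: the necessary optimality condition in abstract primal form from Theorem \ref{prop:necc}, and the formula for the directional derivative $j'(g^\star;h)$ from Proposition \ref{thm:1}. First I would fix $g^\star \in \FF_E$ with $\O_{g^\star}=\O^\star$ and note that, since $\O^\star \in \OO_E$ is locally optimal in the sense of Definition \ref{def:loc0}, Theorem \ref{prop:necc} yields $j'(g^\star;h)\geq 0$ for every $h \in \FF$ whose zero level set meets each component of $\{g^\star=0\}$, \emph{provided} $j$ is directionally differentiable at $g^\star$ in the direction $h$. So the only thing to check before invoking Theorem \ref{prop:necc} is precisely this directional differentiability, together with the hypothesis $\O_{g^\star}\in \OO_E$ needed by Proposition \ref{thm:1}; the latter is immediate because $\O_{g^\star}=\O^\star \in \OO_E$ by assumption.

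Next I would verify the differentiability. For a fixed admissible $h \in \FF$ with $\{h=0\}\cap\{g^\star=0\}^\circ\neq\emptyset$ for all components of $\{g^\star=0\}$ — i.e. condition \eqref{hx00} holds with $g=g^\star$ — Proposition \ref{thm:1} applies verbatim (its hypotheses are exactly $g^\star,h\in\FF$, \eqref{hx00}, and $\O_{g^\star}\in\OO_E$) and gives that $j$ is directionally differentiable at $g^\star$ in direction $h$ with
\[
j'(g^\star;h)=(y_{g^\star}-y_d,q)_{H^2(E)}+\sum_{k=1}^{N}(y_{g^\star}-y_d)(x^k_{obs})\,q(x^k_{obs}),
\]
where $q\in W^{1,p}(\O_{g^\star})$ is the unique solution of \eqref{q}--\eqref{q0} with $y_g$ replaced by $y_{g^\star}$ and $W:\partial\O_{g^\star}\to\R^2$ from Definition \ref{def:w}. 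Then I would simply feed this expression into the inequality $j'(g^\star;h)\geq 0$ supplied by Theorem \ref{prop:necc}, obtaining the claimed variational inequality for all such $h$, and record the PDE system \eqref{q}--\eqref{q0} characterising $q$ for completeness.

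I do not expect any genuine obstacle here: the statement is a corollary obtained by substituting the concrete directional-derivative formula of Proposition \ref{thm:1} into the abstract condition of Theorem \ref{prop:necc}. The only points requiring a line of care are (i) matching the admissibility condition on $h$ in Theorem \ref{prop:necc} (``$\{h=0\}$ meets every component of $\{g^\star=0\}$'') with the hypothesis \eqref{hx00} required by Proposition \ref{thm:1} — these are literally the same condition — and (ii) checking that $\O_{g^\star}\in\OO_E$, which holds because $g^\star\in\FF_E$ and $\O_{g^\star}=\O^\star\in\OO_E$. Once these bookkeeping items are noted, the proof is a one-line composition. I would therefore write the proof as: apply Theorem \ref{prop:necc} to get $j'(g^\star;h)\geq 0$ for all admissible $h$; apply Proposition \ref{thm:1} to rewrite $j'(g^\star;h)$ in the stated form; conclude. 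The resulting inequality is exactly the necessary optimality condition in primal form asserted in the proposition.
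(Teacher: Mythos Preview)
Your proposal is correct and follows exactly the paper's own argument: apply Theorem \ref{prop:necc} to obtain $j'(g^\star;h)\geq 0$ and then use Proposition \ref{thm:1} to identify $j'(g^\star;h)$ with the stated expression. The bookkeeping points you raise (matching \eqref{hx00} with the admissibility condition in Theorem \ref{prop:necc} and verifying $\O_{g^\star}\in\OO_E$) are handled just as you describe.
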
\begin{proof}
We   apply Theorem \ref{prop:necc}, according to which, it holds
\[j'(g^\star; h)\geq 0,\]provided that $j'(g^\star; h)$ exists.
The desired assertion is a consequence of Proposition \ref{thm:1}.
\end{proof}

\subsection{Distributed observation}\label{sec:do}
In this subsection, we are interested in the case 
\[\JJ(\O, y_\O):=   \int_{\O} J(y_\O(x))\,dx+\int_{\O}\psi(x)\,dx ,\]
where   $\psi \in \CC(\bar D)$ and  $J:\R \to \R$ is Lipschitz continuous on bounded sets and directionally differentiable in the same sense as $\beta$. In particular, we suppose in the sequel that   
  for every  $M > 0$ the estimate holds
  \begin{equation}\label{eq:Jlip}
   \|J(y_1) -J(y_2)\|_{L^r(\AA)} \leq L_{ J,M} \, \|y_1 - y_2\|_{L^r(\AA)} \quad \forall\, y_1,y_2 \in \clos{B_{L^\infty(\AA)}(0,M)},\ \forall\, 1\leq r \leq \infty,
  \end{equation}where    $L_{ J,M}> 0$. In addition,
    \begin{equation}\label{eq:dirJ}
   \Big\|\frac{J(y + \tau \,\dy) - J(y)}{\tau} - J'(y;\dy)\Big\|_{L^\varrho(\AA)} \stackrel{\tau \searrow 0}{\longrightarrow} 0 \quad \forall \, y,\dy \in L^\infty(\AA), \ \forall\, 1\leq \varrho<\infty.
  \end{equation}


Note that the above allows for tracking type functionals such as $J(y)=(y-y_d)^2,  y_d \in \R.$ Moreover, $J$ might be  non-smooth and   typical examples entering this category are
\[J(y)=\max\{0,y-y_d\},\quad J(y)=|y-y_d|, \qquad y_d \in \R.\]

In this subsection, the reduced objective associated to \eqref{p} is given by 
\[j(g)=    \int_{\{g<0\}} J(\SS(g)(x))dx+\underbrace{\int_{\{g<0\}}\psi(x)\,dx}_{=:\Psi(g)}, \quad g \in \FF\] and, thanks to Theorem \ref{thm:lr}, we have the following result.


\begin{proposition}\label{thm:2}Assume that $g, h \in \FF$ and \eqref{hx00} is true. Moreover, suppose that $\O_g$ is a domain.  Then $j$ is directionally differentiable at $g$ in direction $h$ with \[j'(g;h)= \int_{\{g<0\}}  J'(\SS(g)(x);q(x))dx-\int_{\{g=0\}} {J(0)}\frac{h}{|\nabla g|} \,d\xi+\Psi'(g;h),\]where $q$ is the unique solution of \eqref{q}-\eqref{q0} and \begin{equation}\label{psi}\Psi'(g;h)= -\int_{\{g=0\}}\psi \frac{h}{|\nabla g|}\,d \xi .\end{equation}\end{proposition}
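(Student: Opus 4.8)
The plan is to differentiate the two contributions to $j(g)=\int_{\{g<0\}}J(\SS(g))\,dx+\Psi(g)$ separately, since $\Psi'(g;h)$ in \eqref{psi} is a purely geometric term that does not involve the state. First I would handle the simpler piece $\Psi(g)=\int_{\{g<0\}}\psi\,dx$: this is exactly of the type treated in \cite{2} (shape functionals independent of the state), so its directional derivative is obtained by a change-of-variables via the flow $T_\l^{\WW}$ generated by the velocity field $W$ from Definition \ref{def:w}, giving the boundary integral $-\int_{\{g=0\}}\psi\,h/|\nabla g|\,d\xi$; the factor $h/|\nabla g|$ comes from the normal component $W\cdot n_{\partial\O_g}$ of the velocity field, which by \eqref{eq:W} equals $-h/|\nabla g|$ on $\{g=0\}$ since $\nabla g = |\nabla g|\,n_{\partial\O_g}$ there (recall the observation in Remark \ref{rem:sd} that $\nabla y_g W=\nabla y_g\,n_{\partial\O_g}(W\cdot n_{\partial\O_g})$, and more basically that $\nabla g\cdot W = |\nabla g|\,(W\cdot n)$). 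Alternatively, and more in the spirit of this paper, one can write $\Psi(g+\l h)-\Psi(g)=\int_{\O_{g+\l h}\setminus\O_g}\psi\,dx-\int_{\O_g\setminus\O_{g+\l h}}\psi\,dx$, parametrize the thin region between $\partial\O_g$ and $\partial\O_{g+\l h}$ using the Hamiltonian trajectories $z$ and the displacement $z_\l-z=\l w_\l+o(\l)$ (Proposition \ref{prop:diff}), and pass to the limit using $w_\l\to w$ in $C^1$ together with $W(z(t))=w(t)$, cf.\,\eqref{eq:wa}; the width of the strip at the point $z(t)$ contributes, after projecting onto the normal, the density $h/|\nabla g|$.

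Next I would treat the main term $\Phi(g):=\int_{\{g<0\}}J(\SS(g))\,dx$. Split the difference quotient as
\begin{align*}
\frac{\Phi(g+\l h)-\Phi(g)}{\l}
&=\frac{1}{\l}\Big(\int_{\o_\l}[J(y_{g+\l h})-J(y_g)]\,dx\Big)
\\&\quad +\frac{1}{\l}\int_{\O_{g+\l h}\setminus\O_g}J(y_{g+\l h})\,dx
-\frac{1}{\l}\int_{\O_g\setminus\O_{g+\l h}}J(y_g)\,dx,
\end{align*}
where $\o_\l=\O_g\cap\O_{g+\l h}$. For the first (interior) term, write $[J(y_{g+\l h})-J(y_g)]/\l = [J(y_g+\l q_\l)-J(y_g)]/\l$ and use the directional differentiability \eqref{eq:dirJ} of the Nemytskii operator $J$ together with Theorem \ref{thm:lr}, namely $q_\l\to q$ in $L^r(\O_g)$, plus the local Lipschitz bound \eqref{eq:Jlip} and the uniform $L^\infty$ bound \eqref{w2p_bound} on the states; a standard argument (add and subtract $J(y_g+\l q)$, estimate one piece by Lipschitz continuity times $\|q_\l-q\|_{L^1}$ and the other by \eqref{eq:dirJ}) gives that this term converges to $\int_{\{g<0\}}J'(\SS(g)(x);q(x))\,dx$. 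For the two boundary layer terms, the key point is that on $\partial\O_{g+\l h}$ one has $y_{g+\l h}=0$, so on $\O_{g+\l h}\setminus\O_g$ the state $y_{g+\l h}$ is $O(\l)$ (as already exploited in Proposition \ref{prop:outside}(i)), hence $J(y_{g+\l h})\to J(0)$ uniformly there by continuity of $J$; similarly $J(y_g)\to J(0)$ uniformly on $\O_g\setminus\O_{g+\l h}$ since $y_g=0$ on $\partial\O_g$. Thus, up to $o(1)$, these two terms equal $\frac{J(0)}{\l}\big(\mu(\O_{g+\l h}\setminus\O_g)-\mu(\O_g\setminus\O_{g+\l h})\big)$, which is exactly $J(0)/\l$ times the increment $\mu(\O_{g+\l h})-\mu(\O_g)$; applying the geometric derivative computation from the $\Psi$-step with $\psi\equiv 1$ yields $-\int_{\{g=0\}}J(0)\,h/|\nabla g|\,d\xi$. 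Summing the three limits and adding $\Psi'(g;h)$ gives the claimed formula, and $j$ is directionally differentiable at $g$ in direction $h$.

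The main obstacle is making the boundary-layer analysis rigorous: one must carefully parametrize the (possibly non-connected, and possibly partially degenerate where $\{g=0\}$ meets $\{h=0\}$) thin regions $\O_{g+\l h}\setminus\O_g$ and $\O_g\setminus\O_{g+\l h}$ in terms of the Hamiltonian trajectories, control the Jacobian of the change of variables uniformly in $\l$ using Proposition \ref{lem:vec} and \ref{prop:diff}, and verify that the contributions from the fixed part $\G_2$ of the boundary (where the layer has zero width) vanish; here the stability results of Section \ref{sec:p} and the uniform $C^{1,\alpha}$ bound \eqref{w2p_bound} on the states are essential, exactly as in the proof of Proposition \ref{prop:outside}. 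The interior term, by contrast, is routine once Theorem \ref{thm:lr} is available.
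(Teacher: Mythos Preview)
Your proposal is correct and follows essentially the same decomposition as the paper's own proof: treat $\Psi$ separately, split the $J$-part into an interior piece handled via Theorem~\ref{thm:lr} together with the add-and-subtract trick $J(y_g+\l q)$, and reduce the boundary-layer pieces to $J(0)$ times the variation of $\mu(\O_{g+\l h})$ using the $O(\l)$ bound on the state near the boundary (exactly as in Proposition~\ref{prop:outside}). The one difference worth flagging is that the paper does \emph{not} carry out the Hamiltonian parametrization of the thin strips that you identify as the main obstacle; it simply invokes an external co-area-type result \cite[Cor.\,A.5]{1}, once for $\Psi'(g;h)$ and once (with $\psi\equiv J(0)$) for the limit of $\frac{J(0)}{\l}(\mu(\O_{g+\l h})-\mu(\O_g))$, so your concern is legitimate but is sidestepped by citation rather than by the direct argument you sketch.
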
\begin{proof}
By using \cite[Cor.\,A.5]{1}, we obtain  
\[\frac{1}{\l}\Big\{\int_{\{g+\l h<0\}}\psi  \,dx- \int_{\{g <0\}}\psi \,dx\Big\}\to -\int_{\{g=0\}}\psi\frac{h}{|\nabla g|} \,d\xi \quad \text{as }\l \searrow 0,\]which yields \eqref{psi}.

We abbreviate for simplicity $y_{g+\l h}:=\SS(g+\l h)$, $\l>0$ small, and $y_g=\SS(g)$. Then, $g+\l h \in \FF$, cf.\,Lemma \ref{lem:glh}, and we have
\begin{align*}
& \frac{1}{\l}{[j(g+\l h)-j(g)]}- \int_{\O_g}J'(y_g(x);q(x))dx+\int_{g=0} J(\underbrace{y_g(\xi)}_{=0})\frac{h}{|\nabla g|} \,d\xi\\&=\int_{\O_{g+\l h} \setminus \O_g} \frac{ 1}{\l} J(y_{g+\l h})\,dx - \int_{\O_{g+\l h} \setminus \O_g} \frac{ 1}{\l} J(0)\,dx\\&\quad+\int_{\O_g} \frac{ 1}{\l}( J(y_{g+\l h})-J(y_g+\l q))dx
\\&\quad +\int_{\O_g} \frac{ 1}{\l}(J(y_g+\l q)-J(y_g))-  J'(y_g;q)dx
\\&\quad+\ { \int_{\O_{g+\l h} \setminus \O_g} \frac{ 1}{\l} J(0)\,d\xi+\int_{g=0} J(0)\frac{h}{|\nabla g|} \,d\xi} =:a_\l+b_\l +c_\l +d_\l.
\end{align*}
Before we analize each term let us observe that, for $\l>0$ small, it holds 
\[\|y_{g+\l h}\|_{\CC(D)}, \|y_g+\l q\|_{L^\infty(D)}\leq M,\]where $M>0$ is  independent of $\l$ (as a result of \eqref{w2p_bound} and since $q\in W^{1,p}(\O)$, $p>2$, is extended by zero outside $\O_g$, see Remark \ref{rem:q}).
From $y_g=0$ on $\O_{g+\l h} \setminus \O_g$ (see Remark \ref{rem:S}) we infer \begin{align*}
|a_\l|&\leq \frac{ 1}{\l} \int_{\O_{g+\l h} \setminus \O_g} |J(y_{g+\l h}(x))-J(y_g(x))|\,dx 
\\&\leq L_{J,M} \|q_\l\|_{L^1(\O_{g+\l h} \setminus \O_g)}
\\&\leq L_{J,M}  \mu(\O_{g+\l h} \setminus \O_g)^{\frac{\varrho-1}{\varrho}} \|q_\l\|_{L^\varrho(D)},\quad \varrho \in (1,\infty).
\end{align*}where in the second estimate we   used \eqref{eq:Jlip}. In light of \eqref{mu} and \eqref{ql_bound}, we arrive at   \[a_\l \to 0.\] Further, by employing again \eqref{eq:Jlip}  as well as Theorem \ref{thm:lr}, we deduce
\begin{align*}
|b_\l|&\leq \int_{\O_g} \frac{ 1}{\l} |J(y_{g+\l h}(x))-J(y_g+\l q))|dx
\\&\leq  L_{J,M} \Big\|\frac{y_{g+\l h}-y_g}{\l}- q\Big\|_{L^1(\O_g)}\to 0.
\end{align*}Due to  the directional differentiability of $J$, cf.\,\eqref{eq:dirJ}, we also have 
\begin{align*}
|c_\l|\leq \int_{\O_g}  \Big|\frac{ J(y_g+\l q)-J(y_g)}{\l}-J'(y_g;q) \Big|dx\to 0.
\end{align*}Finally, by using for instance \cite[Cor.\,A.5]{1}, we obtain 
$
d_\l
\to 0,
$
and by combining all the above convergences we arrive at the desired result.
\end{proof}

As in the previous subsection, we can now write down  a first-order necessary  optimality condition in primal form for all optimal shapes of \eqref{p_sh}.

\begin{proposition}
Let  $\O^\star \in \OO_E$ be a locally  optimal shape of \eqref{p_sh}. Then, each of the  functions  $g^\star \in \FF_E$ that satisfy $\O_{g^\star}=\O^\star$ fulfill
\[   \int_{\{g^\star<0\}}  J'(\SS(g^\star);q)dx-\int_{\{g^\star=0\}} {(J(0)+\psi)}\frac{h}{|\nabla g^\star|} \,d\xi  \geq 0 \]for all $h \in \FF$ with \begin{equation*}
\{h=0\}\cap \{g^\star=0\}^\circ \neq \emptyset \quad \forall\,\{g^\star=0\}^\circ\subset \{g^\star=0\}.\end{equation*}Again,  $q \in W^{1,p}(\O_{g^\star})$  solves \begin{equation} 
-\laplace  q +  \beta'(y_{g^\star};q) =0 \quad \text{in }  \O_{g^\star}, \end{equation}
\begin{equation}
q+ \nabla y_{g^\star}W =0 \quad \text{on } \partial \O_{g^\star},\end{equation}
where  $W:\partial \O_{g^\star} \to \R^2$ is given by Definition \ref{def:w}. 
\end{proposition}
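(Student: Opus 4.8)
The plan is to obtain the assertion as an immediate consequence of the abstract necessary condition in Theorem \ref{prop:necc}, supplied with the explicit directional-derivative formula from Proposition \ref{thm:2}; no new analysis is required.

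First I would fix $g^\star\in\FF_E$ with $\O_{g^\star}=\O^\star$ together with an arbitrary $h\in\FF$ satisfying $\{h=0\}\cap\{g^\star=0\}^\circ\neq\emptyset$ for every component $\{g^\star=0\}^\circ$ of $\{g^\star=0\}$, i.e., condition \eqref{hx00} holds with $g$ replaced by $g^\star$. Since $\O^\star\in\OO_E$, the set $\O_{g^\star}$ is a $\CC^2$ domain, so the standing hypotheses of Proposition \ref{thm:2} are fulfilled. Hence $j$ is directionally differentiable at $g^\star$ in direction $h$, with
\[
j'(g^\star;h)=\int_{\{g^\star<0\}}J'(\SS(g^\star)(x);q(x))\,dx-\int_{\{g^\star=0\}}J(0)\,\frac{h}{|\nabla g^\star|}\,d\xi+\Psi'(g^\star;h),
\]
where, by \eqref{psi}, $\Psi'(g^\star;h)=-\int_{\{g^\star=0\}}\psi\,\frac{h}{|\nabla g^\star|}\,d\xi$, and $q\in W^{1,p}(\O_{g^\star})$ is the unique solution of \eqref{q}-\eqref{q0} with $g=g^\star$, its existence and uniqueness being guaranteed by Proposition \ref{prop:q}, with $W:\partial\O_{g^\star}\to\R^2$ the field of Definition \ref{def:w} (recall $\SS(g^\star)=y_{g^\star}$).

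Next I would invoke Theorem \ref{prop:necc}: as $\O^\star$ is a locally optimal shape of \eqref{p_sh} and $j$ is, by the previous step, directionally differentiable at $g^\star$ in the admissible direction $h$, we obtain $j'(g^\star;h)\geq 0$. Substituting the formula above and merging the two boundary integrals, which share the common factor $\frac{h}{|\nabla g^\star|}$ on $\{g^\star=0\}$, yields
\[
\int_{\{g^\star<0\}}J'(\SS(g^\star)(x);q(x))\,dx-\int_{\{g^\star=0\}}(J(0)+\psi)\,\frac{h}{|\nabla g^\star|}\,d\xi\geq 0,
\]
which is precisely the claimed inequality; the displayed linearized system for $q$ is merely a restatement of \eqref{q}-\eqref{q0} at $g^\star$.

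Since the argument only concatenates two results already established, there is essentially no obstacle. The sole points deserving brief care are that the direction $h$ occurring in the conclusion is simultaneously admissible for Theorem \ref{prop:necc} (which asks only for $h\in\FF$ and \eqref{hx00}) and for Proposition \ref{thm:2} (which additionally uses that $\O_{g^\star}$ is a domain, automatic from $\O^\star\in\OO_E$), and that the difference quotient underlying $j'(g^\star;h)$ is meaningful, i.e., $g^\star+\l h$ stays in $\FF_E$ for $\l>0$ small; both facts are already contained in the proof of Theorem \ref{prop:necc} (the latter via the Weierstrass-type argument recalled there, cf.\,also Lemma \ref{lem:glh}).
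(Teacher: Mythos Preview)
Your proposal is correct and follows exactly the same approach as the paper's own proof, which simply says the result follows by Theorem \ref{prop:necc} and Proposition \ref{thm:2}. You have merely spelled out the hypothesis checks and the combination of the two boundary integrals more explicitly than the paper does.
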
\begin{proof}
The desired result   follows  by Theorem \ref{prop:necc} and  Proposition \ref{thm:2}.
\end{proof}

\section{The FVA and the speed method}\label{sec:sm}

This final section aims at putting into evidence   the strong connection between the  FVA and the prominent  speed method (also known as the velocity method) \cite{sz}. We present a rather formal approach which will lead us to the belief that    the vector field $W$ from Definition  \ref{def:w} generates the perturbations $\O_{g+\l h}$ of the original domain $\O_g$  in a similar way to the   speed method. This supports  our previous observation (Remark \ref{rem:sd}) that \eqref{q}-\eqref{q0} is the very   same PDE that characterizes   the shape derivative \cite[Eqs.\,(3.6),\,(3.8)]{sz}.

Throughout this section, $g,h \in \FF$ are fixed and \eqref{hx00} is satisfied. For simplicity, we assume that $\O_g$ is a (possibly multiply connected) domain. In all what follows, $\l_0>0$ is fixed, small enough, so that the unicity of the approximating closed curves is guaranteed, cf.\,Proposition \ref{lem:unique}.
Recall that  $W:\{g=0\}\to \R^2$ was defined in subsection \ref{sec:dd} as follows 
 \begin{equation}W(x):=w((z^{-1}(x))\quad \forall\, x \in \{g=0\}^\circ,\quad {\{g=0\}^\circ}\subset \{g=0\}\end{equation}where $w\in \CC^1([0,T]; \R^2)$ is the unique solution of \eqref{w} and $z$ describes the boundary of $\O_g$, that is, it solves \eqref{ham}. 
For $\l \in [0,\l_0)$ we can analogously define  $\WW(\l,\cdot):\{g+\l h=0\}\to \R^2$ 
 \begin{equation}\label{wl}\WW(\l,x):= w(\l,(z_\l^{-1}(x))\quad \forall\, x \in \{g+\l h=0\}^\circ,\quad {\{g+\l h=0\}^\circ}\subset \{g+\l h=0\}\end{equation}where  \begin{equation}\label{wl0}  w({ \l},\cdot)=\lim_{\hat \l \to   \l} \frac{z_{ \hat \l}-z_{  \l}}{\hat \l-  \l}\in \CC^1([0,T]; \R^2) \end{equation}  is computed in a similar way to $w$ for $\l>0$ small enough, see the proof of \cite[Prop.\,3.2]{oc_t}. Hence, for fixed $t \in (0,\infty),$ $ w(\l,t)$ is the velocity vector  at $z_\l(t)$ of the trajectory $[0,\l_0) \ni \l \mapsto z_\l(t)$. Note that $\WW(0)=W$.

 In the case of the speed method \cite[Sec.\,2.9]{sz}, the domain perturbations   take place via  $T_\l(\O_g)$, where $T_\l:D \to \R^2$ is the flow associated to a family of vector fields.
Assuming  that there exists $\WW(\l,\cdot) \in C^{0,1}_c(D;\R^2)$  an extension of $\WW(\l,\cdot):\partial \O_{g+\l h} \to \R^2$ (denoted by the same symbol)  so that $\l \mapsto \WW(\l,\cdot) \in \CC([0,\l_0),C^{0,1}_c(D;\R^2))$, let $T_\l^{\WW}:D \to \R^2$ be its associated  flow, i.e., $T_\l^{\WW}(X)=x(\l,X)$ is the unique solution to 
 \begin{equation}\label{flow}
  \frac{d}{d\l} x(\l,X)={\WW}(\l,x(\l,X)) \quad \l \in [0,\l_0), \quad x(0,X)=X \in D.
   \end{equation}We are going to prove next that \begin{equation}\label{same} \partial \O_{g+\l h}=T_\l^{\WW}(\partial \O_g), \quad \quad \l \in [0,\l_0).\end{equation}
 To this end,  we introduce 
 another vector field  which   associates to each point on the boundary of   $\O_g$ a unique point on  the boundary of the perturbed domain $\O_{g+\l h}$.
For $\l \in [0,\l_0)$, this is given by the mapping $Z_{\l}:\{g=0\} \to \{g+\l h=0\}$ defined  as 
 \begin{equation}\label{zl}Z_{\l}(x):=z_\l (z^{-1}(x))\quad \forall\, x \in \{g=0\}^\circ,\quad {\{g=0\}^\circ}\subset \{g=0\}.\end{equation}


Clearly, it holds 
\begin{equation}\label{zla}
Z_{\l}(z(t))=z_\l(t),\quad \forall\,t \in [0,T)\end{equation}and $Z_0(x)=x.$
For a point $X\in \partial \O_g$, we have with $t=z^{-1}(X)$ the following
\begin{align}
\frac{d}{d\l}Z_\l(X)&=\lim_{\hat \l \to   \l} \frac{Z_{ \hat \l}(X)-Z_{  \l}(X)}{\hat \l-  \l}
\\&=\lim_{\hat \l \to   \l} \frac{z_{ \hat \l}(t)-z_{  \l}(t)}{\hat \l-  \l}
\\&=w(\l,t)={\WW}(\l,Z_\l(X)) \quad \l \in [0,\l_0),\end{align} where we use \eqref{zla}, \eqref{wl} and \eqref{wl0}.
Going back to \eqref{flow}, we see that \[x(\l,X)=Z_\l(X) \quad \text{for }X \in \partial \O_{g},\] since \eqref{flow} is uniquely solvable.

{Therefore, the points on the perturbed boundary $\partial \O_{g+\l h}$ appear as flows of $\WW$ with initial condition on the original boundary.
We underline that   our $T_\l^{\WW}$ is well-defined only on $\partial \O_g$ (not in the whole $D$ as in \cite{sz}); {nevertheless,   the entire domain is automatically perturbed, being the interior of a finite union of disjoint planar closed curves without self intersections (Proposition \ref{prop:ham}). }

Thus, we now have an additional (however rather formal) confirmation that the limit  \eqref{diff_s} we calculated in subsection \ref{sec:dd} (Theorems \ref{thm:h1} and \ref{thm:lr})  is in fact 
\[\lim_{\l \searrow 0}\frac{y_{T_\l^{\WW}(\O_g)}-y_{\O_g}}{\l},\]that is, the shape derivative of $y_{\O_g}$ in direction  $\WW$.

\section*{Acknowledgment}
This work was supported by the DFG grant BE 7178/3-1 for the project "Optimal Control of Viscous
Fatigue Damage Models for Brittle Materials: Optimality Systems".  

\bibliographystyle{plain}
\bibliography{Betz}

\end{document}